\newcommand{\N}{\mathbb{N}}
\newcommand{\Z}{\mathbb{Z}}
\newcommand{\R}{\mathbb{R}}
\newcommand{\E}{\mathbb{E}}
\newcommand{\rme}{{\rm e}}
\newcommand{\rmC}{{\rm C}}
\newcommand{\rmD}{{\rm D}}
\newcommand{\rmL}{{\rm L}}
\newcommand{\1}{\mathds{1}}
\newcommand{\Fc}{\mathcal{F}}
\newcommand{\Ec}{\mathcal{E}}
\newcommand{\Gc}{\mathcal{G}}
\newcommand{\Bc}{\mathcal{B}}
\newcommand{\Mc}{\mathcal{M}}
\newcommand{\Kc}{\mathcal{K}}
\newcommand{\Uc}{\mathcal{U}}
\newcommand{\Vc}{\mathscr{V}}
\renewcommand{\mid}{~\middle | ~}
\newcommand{\eps}{\varepsilon}
\newcommand{\mycdot}{\cdot}
\newcommand{\cv}[2][]{\underset{#2}{\overset{#1}{\longrightarrow}}}
\newcommand\quotient[2]{
        \mathchoice
            {
                \text{\raise1ex\hbox{$#1$}\Big/\lower1ex\hbox{$#2$}}%
            }
            {
                #1\,/\,#2
            }
            {
                #1\,/\,#2
            }
            {
                #1\,/\,#2
            }
}
\newcommand{\cadlag}{cadlag }
\renewcommand{\P}{\mathbb{P}}
\newcommand{\Pcal}{\mathcal{P}}
\newcommand{\D}{\mathbb{D}}
\newcommand{\loc}{\text{loc}}
\newcommand{\Dloc}{\D_\loc}
\renewcommand{\d}{{\rm d}}
\newcommand{\Pbf}{\mathbf{P}}
\newcommand{\Ebf}{\mathbf{E}}
\newcommand{\law}{\mathscr{L}}
\newcommand{\Dbf}{\text{\itshape \bfseries D}}
\newcounter{myhypo}
\newtcolorbox{hypo}[1]{
  breakable,
  enhanced,
  top=0pt,
  bottom=0pt,
  nobeforeafter,
  colback=white,
  boxrule=0pt,
  arc=0pt,
  right=40pt,
  left=20pt,
  outer arc=0pt,
  overlay={
    \node[inner sep=0pt,anchor=east] 
    at (frame.east) 
    {(#1)};
  },
}
\newenvironment{hypotheses}
  {\list{}{\setlength\leftmargin{0pt}\item\relax}}
  {\endlist}
\newcommand\Hypo[2]{%
  \begin{hypo}{#1}#2\end{hypo}}  
\newcommand{\pushright}[1]{\ifmeasuring@#1\else\omit\hfill$\displaystyle#1$\fi\ignorespaces}
\newcommand{\pushleft}[1]{\ifmeasuring@#1\else\omit$\displaystyle#1$\hfill\fi\ignorespaces}
\theoremstyle{plain}
\newtheorem{theorem}{Theorem}[section]
\newtheorem{lemma}[theorem]{Lemma}
\newtheorem{proposition}[theorem]{Proposition}
\theoremstyle{definition}
\newtheorem{definition}[theorem]{Definition}
\newtheorem{example-base}[theorem]{Example}
\newtheorem{remark-base}[theorem]{Remark}
\theoremstyle{remark}
\newenvironment{remark}{\pushQED{\qed}\begin{remark-base}}{\popQED\end{remark-base}}
\newenvironment{example}{\pushQED{\qed}\begin{example-base}}{\popQED\end{example-base}}
\begin{document}

\title{L\'evy-type processes: convergence and discrete schemes}
\author{Mihai Gradinaru and Tristan Haugomat\\
{\small Institut de Recherche Math{\'e}matique de Rennes, 
Universit{\'e} de Rennes 1},\\ 
{\small Campus de Beaulieu, 35042 Rennes Cedex, France}\\
{\tt{\small \{Mihai.Gradinaru,Tristan.Haugomat\}@univ-rennes1.fr}}}
\date{
}
\maketitle

{\small\noindent {\bf Abstract:}~ We characterise the convergence of a certain class of discrete time Markov processes toward locally Feller processes in terms of convergence  
of associated operators. The theory of locally Feller processes is applied to Lévy-type 
processes in order to obtain convergence results on discrete and continuous time indexed processes, simulation methods and Euler schemes.  We also apply the same theory to a slightly different situation, in order to get results of convergence of diffusions or random walks toward singular diffusions. As a consequence 
we deduce the convergence of random walks in random medium toward diffusions in 
random potential.}\\
{{\small\noindent{\bf Key words:}~ Lévy-type processes, random walks and diffusions in random and non-random environment, weak convergence of probability measures, discrete schemes, Skorokhod topology, martingale problem, Feller processes, generators}\\
{\small\noindent{\bf MSC2010 Subject Classification:}~Primary~60J25; Secondary~60J75, 60B10, 60G44,  60J35, 60J05, 60K37, 60E07, 47D07}

\section{Introduction}

Lévy-type processes constitute a large class of processes allowing to build models for 
many phenomena. Heuristically, a Lévy-type process is a Markov process taking its values 
in the one-point compactification $\R^{d\Delta}$, such that in each 
point of $a\in\R^d$,
\begin{itemize}
\item it is drifted by the value of a vector $\delta(a)$, 
\item it diffuse with the value $\gamma(a)$, a symmetric  positive semi-definite matrix,
\item it jumps into a Borel subset $B$ of $\R^{d\Delta}$ with the rate $\nu(a,B)$, a 
positive measure satisfying 
\[
\int(1\wedge |b-a|^2)\nu(a,db)<\infty.
\]
\end{itemize}
For a Lévy-type process we will call $(\delta,\gamma,\nu)$ its L\'evy triplet.

In general, the study of the convergence of sequences of general Markov processes is an important  question. The present paper consider this question, among others, in the setting of the preceding two models. The approximating Markov sequences could have continuous or discrete time 
parameter in order to cover scaling transformations or discrete schemes. 

A usual way to obtain such results is the use of the theory of Feller processes. In this 
context there exist two corresponding results of convergence (see, for instance Kallenberg
\cite{Ka02}, Thms. 19.25, p. 385 and 19.27, p. 387). However, on one hand, when one needs to consider unbounded coefficients, for instance the L\'evy triplet $(\delta,\gamma,\nu)$ for Lévy-type processes, 
technical difficulties could appear in the framework of Feller processes. On the other hand the cited 
results of convergence impose the knowledge of the generator. This is not the case 
in some constructions. 

Our method to tackle these difficulties is to consider the context of the martingale local problems and of locally Feller processes, introduced in \cite{GH117}. 
In this general framework we have already analysed the question of convergence of sequences of locally 
Feller processes. In the present paper we add the study of the convergence for processes 
indexed by a discrete time parameter toward processes indexed by a continuous time parameter.
We obtain the characterisation of the convergence in terms of convergence of associated 
operators, by using the uniform convergence on compact sets, and hence operators with 
unbounded coefficients  could be considered.
Likewise, we do not impose that the operator is a generator, but we assume only the well-posed feature of the associated martingale local 
problem. Indeed, it is more easy to verify the well-posed feature (see for instance, Stroock \cite{St75} for Lévy-type processes, Stroock and Varadhan \cite{SV06} for diffusion 
processes,  Kurtz \cite{Ku11} for Lévy-driven stochastic differential equations and forward equations...).

We apply our abstract results and we obtain sharp results of convergence for discrete and  continuous time sequences of processes toward Lévy-type process, in terms of Lévy parameters $(\delta,\gamma,\nu)$. We prefer the use of the Lévy triplet than the symbol 
associated to the operator, since the results are more precise in the situation of possibly instantaneous explosions. This is due essentially to the fact that the vague convergence of bounded measures cannot be characterised in terms of characteristic function. Our results can also be used to simulate Lévy-type processes and we improve Theorem 7.6 from B\"ottcher, Schilling and Wang \cite{BSW13}, p. 172, which is an approximation result of type  Euler scheme. 
We state the results in terms of convergence of operators, but essentially 
one can deduce the convergence of the associated processes.

Another well known model is the dynamic of a Brownian particle in a potential.  
It is often given by the solution of the one-dimensional stochastic differential equation
\[
\d X_t = \d B_t -\frac{1}{2}V^\prime(X_t)\d t,
\]
where $V:\R\to\R$. The process $X$ is also a L\'evy-type process, and, 
thanks to the regularising property of the Brownian motion one can consider very general potentials, for example \cadlag functions  (see Mandle \cite{Ma68}). In particular, it can be supposed that the potential is a Brownian path (see 
Brox \cite{Br86}), a Lévy paths (see Carmona\cite{Ca97}) or other random paths (Gaussian 
and/or fractional process ...).

When studying a Brownian particle in a potential, we prove the continuous dependence of 
the diffusion with respect to the potential, using again our  abstract results, even this is a different situation. We point 
out that it can be possible to consider potentials with very few constraints. In particular we 
consider diffusions in random potentials as limits of random walks in random mediums,
as an application of an approximation of the diffusion by random walks on $\Z$. An important example is the convergence of Sinai's random walk \cite{Si82} toward the diffusion in a Poisson potential (recovering Thm. 2 from Seignourel \cite{Se00}, p. 296),  toward the diffusion in a Brownian potential, also called Brox's diffusion (improving Thm. 1 from Seignourel \cite{Se00}, p. 295) and, more generally, toward the diffusion in a Lévy potential.

Let us describe the organisation of the paper. The next section contains
notations and results from  our previous paper \cite{GH117} which will be used in the 
present paper. In particular, we give the statements concerning  the existence of solutions 
for martingale local problems and concerning the convergence of continuous time locally 
Feller processes.  Section 3 is devoted to the limits of sequences of discrete time processes, while Section 4 contains two results of convergence toward general Lévy-type processes.
The diffusions evolving in a potential are studied in Section 5. The appendix contains the statements of several technical results already proved in \cite{GH117}.

\section{Martingale local problem setting and related results}

Let $S$ be a locally compact Polish space. 
Take $\Delta\not\in S$, and we will denote by $S^\Delta\supset S$ the one-point compactification of $S$, if $S$ is not compact, 
or the topological sum $S\sqcup\{\Delta\}$, if $S$ is compact  (so $\Delta$ is an isolated point). 
The fact that a subset $A$ is compactly embedded in an open subset $U\subset S^\Delta$ will be denoted by $A\Subset U$.
If  $x\in (S^\Delta)^{\R_+}$ we denote the explosion time by
\[
\xi(x):=\inf\{t\geq 0\,|\,\{x_s\}_{s\leq t}\not\Subset S\}.
\]



\noindent
The set of exploding \cadlag paths is defined by 
\begin{equation*}
\Dloc(S):=\left\{ x\in(S^\Delta)^{\R_+}\mid \begin{array}{l}
\forall t\geq\xi(x),~ x_t=\Delta,\\
\forall t\geq 0,~ x_t=\lim_{s\downarrow t}x_s,\\
\forall t>0\text{ s.t. }\{x_s\}_{s<t}\Subset S,~ x_{t-}:=\lim_{s\uparrow t}x_s\text{ exists}
\end{array}\right\},
\end{equation*}
and is endowed with the local Skorokhod topology (see Theorem 2.6 in\cite{GH017}) 
which is also Polish. 
A sequence $(x^k)_{k\in\N}$ in $\Dloc(S)$
converges to $x$ for the local Skorokhod topology if and only if there exists a sequence  $(\lambda^k)_k$ 
of increasing homeomorphisms on $\R_+$ satisfying 
\[
\forall t\geq 0\mbox{ s.t. }\{x_s\}_{s<t}\Subset S,\quad \lim_{k\to\infty}\sup_{s\leq t}d(x_s,x^k_{\lambda^k_s})=0
\quad\mbox{and}\quad
\lim_{k\to\infty}\sup_{s\leq t}|\lambda^k_s -s|=0.
\]
The local Skorokhod topology does not depend on the arbitrary  metric $d$ on $S^\Delta$, but only on the topology on $S$. 
We will always denote by $X$ the canonical process 
on $\Dloc(S)$.
We endow 
$\Dloc(S)$ with the 
Borel 
$\sigma$-algebra $\Fc:=\sigma(X_s,~0\leq s <\infty)$ and a filtration $\Fc_t:=\sigma(X_s,~0\leq s \leq t)$. 

Denote by $\rmC(S):=\rmC(S,\R)$, respectively by $\rmC(S^\Delta):=\rmC(S^\Delta,\R)$, the set of real continuous functions on $S$, respectively on $S^\Delta$, and by $\rmC_0(S)$ the set of functions $f\in\rmC(S)$ vanishing in $\Delta$.
We endow the set $\rmC(S)$ with the topology of uniform convergence on compact sets and  $\rmC_0(S)$ with the topology of uniform convergence.

We proceed by recalling the notion of  martingale local problem.  Let $L$ be a subset of $\rmC_0(S)\times\rmC(S)$. The set $\Mc(L)$ of solutions of the martingale local problem associated 
to $L$ is the set of probabilities $\Pbf\in\Pcal\left(\Dloc(S)\right)$ such that for all $(f,g)\in L$ and open subset $U\Subset S$:
\[
f(X_{t\wedge\tau^U})-\int_0^{t\wedge\tau^U}g(X_s)\d s\text{ is a }\Pbf\text{-martingale}
\]
with respect to the filtration $(\Fc_t)_t$ or, equivalent, to the filtration  $(\Fc_{t+})_t$.
Here $\tau^U$ is  the stopping time given by
\begin{equation}\label{eqtauU}
\tau^U:=\inf\left\{t\geq 0\mid X_t\not\in U\text{ or }X_{t-}\not\in U\right\}.
\end{equation}



In \cite{GH117} the following result of existence of solutions for martingale local problem was 
proved:
\begin{theorem}\label{thmExMP}
Let $L$ be a linear  subspace of $\rmC_0(S)\times\rmC(S)$ such that its domain  $D(L):=\left\{f\in\rmC_0(S)\mid\exists g\in\rmC(S),~(f,g)\in L\right\}$ is dense in $\rmC_0(S)$. Then, there is equivalence between 
\begin{enumerate}
\item[i)] existence of a solution for the martingale local problem: for any $a\in S$ there exists an element $\Pbf$ in $\Mc(L)$ such that $\Pbf(X_0=a)=1$;
\item[ii)] $L$ satisfies the positive maximum principle: for all $(f,g)\in L$ and $a_0\in S$, if $f(a_0)=\sup_{a\in S}f(a)\geq 0$ then $g(a_0)\leq 0$.
\end{enumerate}
\end{theorem}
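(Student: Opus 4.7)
This is the easier direction and the plan is essentially a first-order expansion argument. Given $(f,g)\in L$ and $a_0\in S$ with $f(a_0)=\sup_S f\geq 0$, I would fix any open $U\Subset S$ containing $a_0$ and take a solution $\Pbf\in\Mc(L)$ with $\Pbf(X_0=a_0)=1$, provided by the hypothesis. Since the paths of $X$ are right-continuous and $X_0=a_0\in U$, the exit time satisfies $\tau^U>0$ almost surely. The martingale identity evaluated between $0$ and $t$ reads
\[
\Ebf\bigl[f(X_{t\wedge\tau^U})\bigr]-f(a_0)=\Ebf\Bigl[\int_0^{t\wedge\tau^U}g(X_s)\,\d s\Bigr].
\]
The left-hand side is $\leq 0$ since $f\leq f(a_0)$ everywhere. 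Dividing by $t$ and letting $t\downarrow 0$, the uniform bound $|g|\leq\sup_{\overline{U}}|g|<\infty$ on $[0,\tau^U]$ justifies dominated convergence and yields $g(a_0)\leq 0$.

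\textbf{Direction (ii) $\Rightarrow$ (i).} This is the main content; my plan is to localize on the one-point compactification $S^\Delta$ and appeal to the classical existence theorem for martingale problems on compact spaces. For each relatively compact open $U\Subset S$, I would fix a cutoff $\chi_U\in\rmC_c(S)$ with $0\leq\chi_U\leq 1$ and $\chi_U\equiv 1$ on $U$, and define the bounded operator
\[
L_U:=\bigl\{(f,\chi_U g):(f,g)\in L\bigr\}\cup\bigl\{(1,0)\bigr\}\subset\rmC(S^\Delta)\times\rmC(S^\Delta),
\]
with functions in $D(L)$ extended by $0$ at $\Delta$. Since $\chi_U\geq 0$ and since $g(a_0)\leq 0$ at any interior maximum $a_0\in S$ of $f$ by the PMP on $S$, while any maximum taken at $\Delta$ equals $0$, the operator $L_U$ inherits the positive maximum principle on $S^\Delta$. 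Its domain is bounded, dense in $\rmC(S^\Delta)$, and contains the constants.

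I would then invoke the classical compact-state-space result: for a bounded densely defined operator on $\rmC(S^\Delta)$ satisfying the PMP, Hille--Yosida--Ray furnishes a positive contraction semigroup, whose associated Feller process solves the martingale problem for $L_U$ started from any $a\in S^\Delta$. This produces a probability $\Pbf_a^U$ with $\Pbf_a^U(X_0=a)=1$, and because $L_U$ agrees with $L$ on $U$, the stopped process $X_{\cdot\wedge\tau^U}$ satisfies the defining martingale property of $\Mc(L)$ relative to this particular $U$. To conclude, I would exhaust $S$ by a nested sequence $U_n\Subset U_{n+1}$ with $\bigcup_n U_n=S$, extract a subsequential weak limit in $\Pcal(\Dloc(S))$ of the associated laws, and verify that the limit lies in $\Mc(L)$. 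The main obstacle I anticipate is precisely this last step: establishing tightness of $\{\Pbf_a^{U_n}\}_n$ in the local Skorokhod topology and showing the limit does not lose mass through spurious explosion before $\xi$, which is exactly what the local Skorokhod framework and the stopping-time formulation in the definition of $\Mc(L)$ are designed to accommodate.
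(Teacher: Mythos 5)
The paper itself does not present a proof of Theorem~\ref{thmExMP}; it cites its companion paper \cite{GH117}, so there is no in-text argument to compare against. Evaluating your attempt on its own terms: the direction (i) $\Rightarrow$ (ii) is correct and is the standard first-order expansion of the stopped martingale identity, with $\tau^U>0$ a.s. guaranteed by right-continuity of the canonical paths.

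In (ii) $\Rightarrow$ (i) there is a genuine gap at the crucial middle step. You assert that $L_U$ is a \emph{bounded} operator and then invoke Hille--Yosida--Ray to obtain a Feller semigroup from the PMP. Both halves of this are problematic. First, $L_U$ is not bounded as an operator: the cutoff $\chi_U$ makes each image $\chi_U g$ a bounded, compactly supported \emph{function}, but it does not give an estimate $\|\chi_U g\|\leq C\|f\|$ (think of $L$ the Laplacian on $\R^d$; $\chi_U\Delta$ is still unbounded). Second, even for a closed densely defined operator, the Hille--Yosida--Ray theorem requires the range condition (density of the range of $\lambda-L_U$ for some $\lambda>0$) in addition to the PMP; the PMP alone is equivalent to dissipativity and does not produce a semigroup. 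What the PMP plus density of the domain \emph{does} give, on a compact metric space, is \emph{existence} (not uniqueness) of a solution to the martingale problem — this is the Ethier--Kurtz existence theorem (Ethier and Kurtz, Ch.~4, Thm.~5.4), whose proof proceeds by discrete-time approximation rather than through a semigroup. That is the result you should be invoking for $L_U$ on $S^\Delta$, and replacing the semigroup step by it makes the localization strategy sound in principle. The final exhaustion step as you sketch it (take a weak subsequential limit of $\Pbf_a^{U_n}$) can also be streamlined by a concatenation construction: run the $L_{U_1}$-solution until $\tau^{U_1}$, then restart from $X_{\tau^{U_1}}$ with an $L_{U_2}$-solution, and so on; this directly builds a single element of $\Mc(L)$ in $\Pcal(\Dloc(S))$ and avoids the tightness issue you flag as the main obstacle.
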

\noindent
Let us note that a linear subspace $L\subset\rmC_0(S)\times\rmC(S)$ satisfying the positive maximum principle is univariate, so it can be equivalently considered as a linear operator
\[
L:\rmD(L)\to\rmC(S).
\]
The martingale local problem is said well-posed if there is existence and uniqueness of the solution, which means that 
for any $a\in S$ there exists an unique element $\Pbf$ in $\Mc(L)$ such that $\Pbf(X_0=a)=1$.


A family of probabilities  $(\Pbf_a)_a\in\Pcal(\Dloc(S))^S$ is called locally Feller if 
there exists $L\subset\rmC_0(S)\times\rmC(S)$ such that $\rmD(L)$ is dense in $\rmC_0(S)$ and
\[
\forall a\in S:\quad\quad
\Pbf\in\Mc(L)\,\mbox{ and }\,\Pbf(X_0=a)=1
\Longleftrightarrow
\Pbf=\Pbf_a.
\]
The $\rmC_0\times\rmC$-generator of a locally Feller family $(\Pbf_a)_a\in\Pcal(\Dloc(S))^S$ is the set of functions $(f,g)\in\rmC_0(S)\times\rmC(S)$ such that, for any $a\in S$ and any 
open subset $U\Subset S$,
\[
f(X_{t\wedge\tau^U})-\int_0^{t\wedge\tau^U}g(X_s)\d s\text{ is a }\Pbf_a\text{-martingale}.
\]
%

\noindent
It was noticed in Remark 4.11 in \cite{GH117} that if  $h\in\rmC(S,\R_+^*)$ and if
$L$ is the $\rmC_0\times\rmC$-generator of a locally Feller family, then 
\begin{equation}\label{notationhL}
hL:=\{(f,hg)\,|\,(f,g)\in L\}
\;\mbox{ is the $\rmC_0\times\rmC$-generator of a locally Feller family.}
\end{equation}

A family of probability measures associated to a Feller semi-group constitutes a natural example of locally Feller family (see Theorem 4.8 from \cite{GH117}).
We recall that a Feller semi-group $(T_t)_{t\in\R_+}$ is a strongly continuous semi-group   of positive linear contractions on $\rmC_0(S)$. 
Its  $\rmC_0\times\rmC_0$-generator is the set $L_0$ of $(f,g)\in\rmC_0(S)\times\rmC_0(S)$ such that, for all $a\in S$
\[
\lim_{t\to 0}\frac{1}{t}\big(T_tf(a)-f(a)\big)=g(a).
\]
It was proved in Propositions 4.2 and 4.12 from \cite{GH117}, that the martingale problem associated to 
$L_0$ admits a unique solution and, if $L$ denotes its  $\rmC_0(S)\times\rmC(S)$-generator  then, taking the closure in $\rmC_0(S)\times\rmC(S)$,
\begin{equation}\label{eqGenFF}
L_0=L\cap\rmC_0(S)\times\rmC_0(S)\quad\text{and}\quad L=\overline{L_0}.
\end{equation}
%

The following result of convergence is essential for our further development and it was proved in \cite{GH117}. As was already pointed out in the introduction, an improvement with respect to the classical result of convergence (for instance Theorem 19.25, p. 385, in \cite{Ka02}), is that one does not need to know the generator of the limit family, but only the fact that a martingale local problem is well-posed. 

\begin{theorem}[Convergence of locally Feller family]\label{thmCvgLocFel}
For $n\in\N\cup\{\infty\}$, let $(\Pbf^n_a)_a\in\Pcal(\Dloc(S))^S$ be a locally Feller family and let $L_n$ be a subset of $\rmC_0(S)\times\rmC(S)$. Suppose that for any $n\in\N$, $\overline{L_n}$ is the generator of $(\Pbf^n_a)_a$, suppose also that $\rmD(L_\infty)$ is dense in $\rmC_0(S)$ and
\[
\forall a\in S:\quad\quad
\Pbf\in\Mc(L_\infty)\,\mbox{ and }\,\Pbf(X_0=a)=1
\Longleftrightarrow
\Pbf=\Pbf^\infty_a.
\]
Then we have equivalence between:
\begin{enumerate}
\item[a)] the mapping
\[\begin{array}{ccc}
\N\cup\{\infty\}\times\Pcal(S^\Delta)&\to&\Pcal\left(\Dloc(S)\right)\\
(n,\mu)&\mapsto&\Pbf^n_\mu
\end{array}\]
is weakly continuous for the local Skorokhod topology, where $\Pbf_\mu:=\int\Pbf_a\mu(\d a)$ and $\Pbf_\Delta(X_0=\Delta)=1$;
\item[b)] for any $a_n,a\in S$ such that $a_n\to a$, 
$\Pbf^n_{a_n}$ converges weakly for the local Skorokhod topology to $\Pbf^\infty_a$, 
as $n\to\infty$; 
\item[c)] for any $f\in \rmD(L_\infty)$, there exist for each $n$, $f_n\in \rmD(L_n)$ such that $f_n\cv[\rmC_0]{n\to\infty} f$, $L_nf_n\cv[\rmC]{n\to\infty}L_\infty f$.
\end{enumerate}
\end{theorem}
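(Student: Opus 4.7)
My plan is to prove the three implications $(a)\Leftrightarrow(b)$, $(c)\Rightarrow(b)$, and $(b)\Rightarrow(c)$ separately. The first is soft: $(a)\Rightarrow(b)$ follows by specialising to Dirac masses $\mu_n=\delta_{a_n}$, $\mu=\delta_a$, while for $(b)\Rightarrow(a)$, given $(n_k,\mu_{n_k})\to(\infty,\mu)$ in $(\N\cup\{\infty\})\times\Pcal(S^\Delta)$, I would apply Skorokhod's representation theorem on the Polish space $S^\Delta$ to obtain random initial points $A_{n_k}\to A$ almost surely with laws $\mu_{n_k},\mu$. Dominated convergence applied to the mixture $\Pbf^n_\mu=\int\Pbf^n_a\,\mu(\d a)$, combined with the pointwise convergence $\Pbf^{n_k}_{A_{n_k}}\to\Pbf^\infty_A$ given by $(b)$ (extended trivially to the cemetery $\Delta$), then yields $(a)$.

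For $(c)\Rightarrow(b)$, I fix $a_n\to a$ and argue by tightness-plus-identification. Tightness of $(\Pbf^n_{a_n})_n$ in $\Pcal(\Dloc(S))$ for the local Skorokhod topology should follow from the appendix results of \cite{GH117}, using uniform bounds on $L_nf_n$ over compact subsets of $S$ for a sufficiently rich family of test functions $f\in\rmD(L_\infty)$ (the approximants $f_n$ being supplied by $(c)$). Next, each weak limit point $\Pbf$ of the sequence must be identified as a solution of the martingale local problem for $L_\infty$: for every $(f,L_\infty f)\in L_\infty$, approximants $(f_n,L_nf_n)\to(f,L_\infty f)$ in $\rmC_0(S)\times\rmC(S)$, and each open $U\Subset S$, the process
\[
f_n(X_{t\wedge\tau^U})-\int_0^{t\wedge\tau^U}L_nf_n(X_s)\,\d s
\]
is a $\Pbf^n_{a_n}$-martingale, and passes to the weak limit (using continuity of the stopped functional at paths not grazing $\partial U$, which is standard for the local Skorokhod topology) to a $\Pbf$-martingale associated with $(f,L_\infty f)$. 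Well-posedness of the $L_\infty$-martingale local problem then forces $\Pbf=\Pbf^\infty_a$, so the tight sequence has a unique limit point.

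For $(b)\Rightarrow(c)$, the difficulty is that $L_\infty f$ lies in $\rmC(S)$ rather than $\rmC_0(S)$, so the $\rmC_0$-Feller resolvent cannot be used directly. My plan is to first select $h\in\rmC(S,\R_+^*)$ decaying rapidly enough at infinity that $hg:=h\cdot L_\infty f\in\rmC_0(S)$. By \eqref{notationhL}, $hL_n$ and $hL_\infty$ are again $\rmC_0\times\rmC$-generators of locally Feller families, obtained through the continuous time-change on $\Dloc(S)$ associated to $h$, and assumption $(b)$ is preserved by this map. The pair $(f,hg)$ now lies in the $\rmC_0\times\rmC_0$-restriction of $hL_\infty$, which by \eqref{eqGenFF} is the $\rmC_0\times\rmC_0$-generator of a genuine Feller semigroup. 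Strong convergence of the associated Feller semigroups $T^n_t\to T^\infty_t$ on $\rmC_0(S)$, deduced from $(b)$ via $T^n_t\varphi(a)=\Ebf^n_a[\varphi(X_t)]$, implies strong convergence of the resolvents $R^n_1$. Setting $f_n:=R^n_1(f-hg)$ then produces $f_n$ in the domain of the generator of $hL_n$, with $f_n\to f$ in $\rmC_0(S)$ and $hL_nf_n=f_n-(f-hg)\to hg$ in $\rmC_0(S)$; dividing by $h$ gives $L_nf_n\to g$ in $\rmC(S)$. A final diagonal approximation of each $f_n$ by genuine elements of the subset $L_n$, legitimate because $\overline{L_n}$ is the generator, completes the construction.

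The main obstacle lies in the identification step of $(c)\Rightarrow(b)$: passing the martingale property through the weak limit needs care because of the localisation by $\tau^U$ and the possibility of explosion in $\Dloc(S)$. This is where the local Skorokhod machinery developed in \cite{GH117} is indispensable, together with the choice of test functions from $\rmD(L_\infty)$ controlling the exit times $\tau^U$ well enough that the stopped functionals are continuous at $\Pbf$-almost every path.
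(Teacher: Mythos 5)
The paper does not prove this theorem: it is imported verbatim from the companion paper \cite{GH117} (``...~and it was proved in \cite{GH117}''), so there is no in-paper proof for me to compare against. I therefore assess your proposal on its own terms.

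Your outlines of $(a)\Leftrightarrow(b)$ and $(c)\Rightarrow(b)$ are structurally reasonable, but two points need more care. In $(b)\Rightarrow(a)$, assertion $(b)$ gives convergence only for $a_n\to a\in S$; you also need $\Pbf^n_{a_n}\to\Pbf_\Delta$ when $a_n\to\Delta$, since $\mu\in\Pcal(S^\Delta)$ may charge or concentrate near $\Delta$. This is where the locally Feller structure is genuinely used and cannot be handled by the parenthetical ``extended trivially to the cemetery.'' In $(c)\Rightarrow(b)$ the tightness claim is left entirely to an unnamed appendix result; in the locally Skorokhod/explosion setting this is precisely the place where a concrete criterion must be cited and checked against the bounds supplied by the approximants $f_n$.

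The serious gap is in $(b)\Rightarrow(c)$. You choose $h$ so that $hL_\infty f\in\rmC_0(S)$ and invoke \eqref{eqGenFF} to conclude that the $\rmC_0\times\rmC_0$-restriction of $hL_\infty$ is the generator of a genuine Feller semigroup, after which resolvent convergence produces the $f_n$. But \eqref{eqGenFF} is a statement \emph{about} families that already come from a Feller semigroup: it says that for such a family the $\rmC_0\times\rmC_0$-part of its $\rmC_0\times\rmC$-generator recovers the Feller generator. It does not say that for an arbitrary locally Feller family the $\rmC_0\times\rmC_0$-restriction of its generator is a Feller generator; in general that restriction may fail to have dense domain in $\rmC_0(S)$, and when the time-changed process explodes or is not conservative it need not correspond to any Feller semigroup at all. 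Your $h$ is also chosen locally for a single $f$, so density of the restricted domain is even harder to justify. In addition, the claim that ``assumption $(b)$ is preserved'' under the $h$-time-change requires establishing weak continuity of the time-change map on $\Pcal(\Dloc(S))$ for the local Skorokhod topology, which is nontrivial for \cadlag paths (jumps) and for paths that explode. Without these facts, setting $f_n:=R^n_1(f-hg)$ is not a licensed construction. To repair this direction you would need either to prove that the time-changed families are actually Feller and that $(b)$ transfers to them, or to abandon the $\rmC_0$-resolvent route in favour of a construction of the $f_n$ that stays inside the locally Feller framework (working with the stopped processes on open $U\Subset S$ and the martingale local problem directly).
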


\section{Convergence of  families indexed by discrete time}

We start our study by giving a discrete time version of the notion of locally Feller family.
\begin{definition}[Discrete time locally Feller family]\label{defDTSCMF}
We denote by $Y$ the discrete time canonical process on $(S^\Delta)^\N$ and we endow $(S^\Delta)^\N$ with the canonical $\sigma$-algebra.
A family $(\Pbf_a)_a\in\Pcal\left((S^\Delta)^\N\right)^S$ is said to be a discrete time locally Feller family if there exists an  operator $T:\rmC_0(S)\to\rmC_b(S)$, called  transition operator, 
such that
for any $a\in S$: $\Pbf_a(Y_0=a)=1$ and 
\begin{align}\label{eqDefDTSCMF1}
\forall n\in\N,\,\,\forall f\in\rmC_0(S),\quad
\Ebf_a\left(f(Y_{n+1})\mid Y_0,\ldots,Y_n\right)=\1_{\{Y_n\neq\Delta\}}Tf(Y_n)\quad\Pbf_a\text{-a.s.}
\end{align}
If we denote $\Pbf_\Delta$ the probability  defined by $\Pbf_\Delta(\forall n\in\N,~Y_n=\Delta)=1$, then for $\mu\in\Pcal(S^\Delta)$, $\Pbf_\mu:=\int\Pbf_a\mu(\d a)$
satisfies also \eqref{eqDefDTSCMF1}.
\end{definition}

Now we can state the main result of this section which, similarly, is an improvement with respect to Theorem 19.27, p. 387, in \cite{Ka02}, in the 
sense that one does not need to know the generator of the limit family, but only the fact that a martingale local problem is well-posed. 

\begin{theorem}[Convergence]\label{thmCvgLocFel2}
Let $L$ be a subset of $\rmC_0(S)\times\rmC(S)$ with $\rmD(L)$ a dense subset 
of  $\rmC_0(S)$, such that  the martingale local problem associated to $L$ is well-posed, and 
let $(\Pbf_a)_a\in\Pcal(\Dloc(S))^S$ be the associated continuous time locally Feller family. 
For each $n\in\N$ we introduce  $(\Pbf^n_a)_a\in\Pcal((S^\Delta)^\N)^S$ a
discrete time locally Feller families having their transition operator $T_n$. We denote 
by $L_n$ the operator $(T_n-{\rm id})/\eps_n$, where $(\eps_n)_n$ is a sequence of positive constants converging to $0$, as $n\to\infty$.
There is equivalence between:
\begin{enumerate}
\item[a)] for any $\mu_n,\mu\in \Pcal(S^\Delta)$ such that $\mu_n\to \mu$ weakly, as $n\to\infty$,
\[
\law_{\Pbf^n_{\mu_n}}\left((Y_{\lfloor t/\eps_n\rfloor})_t\right)\cv[\Pcal\left(\Dloc(S)\right)]{n\to\infty}\Pbf_\mu\,;
\]
\item[b)] for any $a_n,a\in S$ such that $a_n\to a$, as $n\to\infty$,
\[
\law_{\Pbf^n_{a_n}}\left((Y_{\lfloor t/\eps_n\rfloor})_t\right)\cv[\Pcal\left(\Dloc(S)\right)]{n\to\infty}\Pbf_a\,;
\]
\item[c)] for any $f\in \rmD(L)$, there exists, $(f_n)_n\in \rmC_0(S)^{\N}$ such that $f_n\cv[\rmC_0(S)]{n\to\infty}f$ and $L_nf_n\cv[\rmC(S)]{n\to\infty}Lf$.
\end{enumerate}
\end{theorem}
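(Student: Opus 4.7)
The plan is to prove the cycle (a) $\Rightarrow$ (b) $\Rightarrow$ (c) $\Rightarrow$ (a), the heart of the argument being a reduction of (c) to the continuous-time convergence result, Theorem \ref{thmCvgLocFel}, through a Poisson randomisation of the clock.

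The implication (a) $\Rightarrow$ (b) is immediate, by specialising $\mu_n = \delta_{a_n}$ and $\mu = \delta_a$. For the main implication (c) $\Rightarrow$ (a), I would associate to each $n$ an intermediate continuous-time jump process $\tilde Y^n$, namely the Markov process which waits an exponential time of rate $1/\eps_n$, then moves according to $T_n$. Since $T_n$ maps $\rmC_0(S)$ boundedly into $\rmC_b(S)$, the corresponding family $(\tilde \Pbf^n_a)_a$ is locally Feller with $\rmC_0\times\rmC$-generator containing $L_n=(T_n-\id)/\eps_n$, with explicit semigroup
\[
e^{tL_n} f = \sum_{k\geq 0}\, e^{-t/\eps_n}\,\frac{(t/\eps_n)^k}{k!}\, T_n^k f.
\]
Assumption (c) is then exactly the hypothesis of Theorem \ref{thmCvgLocFel}(c) for the sequence $(\tilde\Pbf^n_a)_a$, and that theorem delivers $\tilde\Pbf^n_{\mu_n}\to \Pbf_\mu$ weakly in $\Pcal(\Dloc(S))$ as soon as $\mu_n\to\mu$. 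It remains to compare the rescaled discrete chain with $\tilde Y^n$: coupling both constructions via a single underlying discrete Markov chain $(Z^n_k)_k$ of transition $T_n$ and an independent rate $1/\eps_n$ Poisson process $N^n$, one has $Y_{\lfloor t/\eps_n\rfloor}=Z^n_{\lfloor t/\eps_n\rfloor}$ and $\tilde Y^n_t = Z^n_{N^n_t}$. By the law of large numbers, $\sup_{s\leq t}|\eps_n N^n_s - s|\to 0$ in probability, so that an appropriate continuous interpolation $\lambda^n$ of $\eps_n N^n_\cdot$ provides an admissible sequence of time-change homeomorphisms in the characterisation of the local Skorokhod topology recalled in Section 2.

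The converse (b) $\Rightarrow$ (c) follows from the classical resolvent argument of Trotter--Kurtz type. Given $f\in\rmD(L)$, set $g:=f-Lf$ and define
\[
f_n \;:=\; \sum_{k\geq 0}\,\eps_n(1+\eps_n)^{-(k+1)}\, T_n^k g,
\]
which satisfies $f_n - L_n f_n = g$ by construction. Assumption (b), together with the same Poisson-randomisation comparison as above, yields convergence of the discrete resolvents towards the continuous resolvent of $L$, hence $f_n \to f$ in $\rmC_0(S)$ and consequently $L_n f_n = f_n - g\to f-g = Lf$ in $\rmC(S)$.

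The principal obstacle is the coupling-plus-time-change step in (c) $\Rightarrow$ (a). One must check that $\lambda^n$ is an increasing homeomorphism of $\R_+$, that $\sup_{s\leq t} d(Z^n_{\lfloor s/\eps_n\rfloor}, Z^n_{N^n_s}) \to 0$ in probability on events of the form $\{(X_s)_{s<t}\Subset K\}$ for compact $K\Subset S$, and — crucially — that the entire construction survives the localisation near the explosion time, where the local Skorokhod topology allows trajectories to exit every compact. Localising via the stopping times $\tau^U$ of \eqref{eqtauU} should reduce the problem to a bounded-compact setting in which the coupling estimate is standard, but the careful bookkeeping near the boundaries of the chosen compacts is where the real work lies.
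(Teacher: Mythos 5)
Your plan for the implication $c)\Rightarrow a)$ via Poisson randomisation and a time change is essentially the paper's strategy: the paper too couples the rescaled chain $Y_{\lfloor t/\eps_n\rfloor}$ with the jump process $Z^n_t:=Y_{N_{t/\eps_n}}$ on a common probability space $\Pbf^n_\mu\otimes\Ec(1)^{\otimes\N}$, shows $\law_{\P^n_\mu}(Z^n)\in\Mc(L_n)$ by an explicit stopping-time computation, invokes Theorem~\ref{thmCvgLocFel}, and finishes with a time-change comparison lemma (Lemma~\ref{lemCvgPbToLaw}) whose hypothesis is verified by Doob's $L^2$-maximal inequality for the compensated Poisson sums. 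You correctly single out the time-change step near explosion as the crux; the paper factors this into Lemma~\ref{lmCvgPbToLaw}, which operates entirely through compactness of the prelimit laws and openness of neighbourhoods of the diagonal, so no explicit localisation by $\tau^U$ is needed.

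The real gap is your proposed $b)\Rightarrow c)$ via a Trotter--Kurtz resolvent identity. Two things go wrong. First, $T_n$ is only assumed to map $\rmC_0(S)$ into $\rmC_b(S)$, so the iterates $T_n^k g$ and hence $f_n=\sum_{k\geq 0}\eps_n(1+\eps_n)^{-(k+1)}T_n^k g$ need not belong to $\rmC_0(S)$ (they need not vanish at $\Delta$), whereas assertion $c)$ requires $f_n\in\rmC_0(S)$. Second, and more seriously, in the locally Feller setting there is no ``continuous resolvent of $L$'' as a bounded operator $\rmC_0(S)\to\rmC_0(S)$: the process may explode, in which case $a\mapsto\Ebf_a\int_0^\infty e^{-t}g(X_t)\,\d t$ generically fails to vanish at infinity, and deriving convergence of resolvents from weak convergence on $\Dloc(S)$ with the \emph{local} Skorokhod topology is exactly the kind of statement that needs to be established, not assumed. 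The paper sidesteps all of this by never proving $b)\Rightarrow c)$ directly: Theorem~\ref{thmCvgLocFel} applied to the Poisson-randomised processes is already a two-sided equivalence $b')\Leftrightarrow c)$, so once the time-change lemma gives $b)\Leftrightarrow b')$ (and $a)\Leftrightarrow a')$), the full three-way equivalence follows without any resolvent computation. You should replace your $b)\Rightarrow c)$ argument by this observation.
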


\noindent
Here $\lfloor r\rfloor$ denotes the integer part of the real number $r$.
\begin{proof}
Let $\Omega:=(S^\Delta)^\N\times\R_+^\N$ and $\Gc:=\Bc(S^\Delta)^{\otimes\N}\otimes\Bc(\R_+)^{\otimes\N}$ be. For any $\mu\in\Pcal(S^\Delta)$ and $n\in\N$, define $\P^n_\mu:=\Pbf^n_\mu\otimes\Ec(1)^{\otimes\N}$, where $\Ec(1)$ is the exponential distribution with expectation $1$.
Define 
\[\begin{array}{ccccc}
Y_n:&\Omega&\to & S&\quad\mbox{ and }\quad\\
&\left(((y_k)_k,(s_k)_k)\right)&\mapsto & y_n
\end{array} 
\begin{array}{cccc}
E_n:&\Omega&\to & \R_+\\
&\left(((y_k)_k,(s_k)_k)\right)&\mapsto & s_n,
\end{array}\]
and introduce the standard Poisson process
\[
\forall t\geq0,\quad N_t:=\inf\Big\{n\in\N\;\Big|\;\sum_{k=1}^{n+1}E_k>t\Big\}.
\]

\noindent
{\sl Step 1)} For each $n\in\N$ define $Z^n_t:=Y_{N_{t/\eps_n}}$. Consider the 
modified assertions:
\begin{enumerate}
\item[$a^{\prime})$] for any $\mu_n,\mu\in \Pcal(S^\Delta)$ such that $\mu_n\to \mu$,
\[
\law_{\P^n_{\mu_n}}\left(Z^n\right)\cv[\Pcal\left(\Dloc(S)\right)]{n\to\infty}\Pbf_\mu;
\]
\item[$b^{\prime})$] for any $a_n,a\in S$ such that $a_n\to a$,
\[
\law_{\P^n_{a_n}}\left(Z^n\right)\cv[\Pcal\left(\Dloc(S)\right)]{n\to\infty}\Pbf_a,
\]
\end{enumerate}
We will verify that $a^{\prime})\Leftrightarrow b^{\prime})\Leftrightarrow c)$. 
We need to prove that for all $\mu\in\Pcal(S^\Delta)$, $\law_{\P^n_\mu}(Z^n)\in\Mc(L_n)$. 
Taking $\Gc^n_t:=\sigma(N_{s/\eps_n},Z^n_s,~s\leq t)$, it is enough to prove 
that, for each $f\in\rmC_0(S)$ and $0\leq s\leq t$,
\[
\E^n_\mu\left[f(Z^n_t)-f(Z^n_s)-\int_s^tL_nf(Z_u^n)\d u\mid\Gc^n_s\right]=0.
\]
Let us introduce the $(\Gc^n_t)_t$-stopping times  $\tau^n_k:=\inf\Big\{u\geq 0\,\big|\, N_{u/\eps_n}=k\Big\}$. Then, for all $k\in\N$,
\begin{align*}
&\E^n_\mu\left[f(Z^n_{t\wedge(\tau^n_{k+1}\vee s)})-f(Z^n_{t\wedge(\tau^n_{k}\vee s)})\mid\Gc^n_{t\wedge(\tau^n_{k}\vee s)}\right]\\
&\quad=\1_{\{\substack{t>\tau^n_{k},s<\tau^n_{k+1}}\}}\E^n_\mu\left[(f(Y_{k+1})-f(Y_k))\1_{\{\tau^n_{k+1}\leq t\}}\mid\Gc^n_{t\wedge(\tau^n_{k}\vee s)}\right]\\
&\quad=\1_{\{\substack{t>\tau^n_{k},s<\tau^n_{k+1}}\}}
\E^n_\mu\left[(f(Y_{k+1})-f(Y_k))\1_{\{\tau^n_{k+1}-\tau^n_{k}\vee s\leq t-\tau^n_{k}\vee s\}}\mid\Gc^n_{\tau^n_{k}\vee s}\right]\\
&\quad=\1_{\{\substack{t>\tau^n_{k},s<\tau^n_{k+1}}\}}
(T_nf(Y_k)-f(Y_k))\big(1-\exp(-(t-\tau^n_{k}\vee s)/\eps_n)\big)\\
&\quad=\1_{\{\substack{t>\tau^n_{k},s<\tau^n_{k+1}}\}}
L_nf(Z^n_{\tau^n_{k}\vee s})\eps_n\big(1-\exp(-(t-\tau^n_{k}\vee s)/\eps_n)\big),
\end{align*}
where we used the fact that $(N_{u/\eps_n})_u$ is a Poisson process. Similarly, 
\begin{align*}
&\E^n_\mu\left[\int_{t\wedge(\tau^n_{k}\vee s)}^{t\wedge(\tau^n_{k+1}\vee s)}L_nf(Z_u^n)\d u\mid\Gc^n_{t\wedge(\tau^n_{k}\vee s)}\right]\\
&\quad=\1_{\{\substack{t>\tau^n_{k},s<\tau^n_{k+1}}\}}
L_nf(Z^n_{\tau^n_{k}\vee s})\E^n_\mu\left[t\wedge\tau^n_{k+1}-\tau^n_{k}\vee s\mid\Gc^n_{t\wedge(\tau^n_{k}\vee s)}\right]\\
&\quad=\1_{\{\substack{t>\tau^n_{k},s<\tau^n_{k+1}}\}}
L_nf(Z^n_{\tau^n_{k}\vee s})\E^n_\mu\left[(t-\tau^n_{k}\vee s)\wedge(\tau^n_{k+1}-\tau^n_{k}\vee s)\mid\Gc^n_{\tau^n_{k}\vee s}\right]\\
&\quad=\1_{\{\substack{t>\tau^n_{k},s<\tau^n_{k+1}}\}}
L_nf(Z^n_{\tau^n_{k}\vee s})\int_0^\infty (1/\eps_n)\exp(-u/\eps_n)
((t-\tau^n_{k}\vee s)\wedge u)\d u\\
&\quad=\1_{\{\substack{t>\tau^n_{k},s<\tau^n_{k+1}}\}}
L_nf(Z^n_{\tau^n_{k}\vee s})\eps_n\big(1-\exp(-(t-\tau^n_{k}\vee s)/\eps_n)\big).
\end{align*}
Hence
\[
\E^n_\mu\left[f(Z^n_{t\wedge(\tau^n_{k+1}\vee s)})-f(Z^n_{t\wedge(\tau^n_{k}\vee s)})-\int_{t\wedge(\tau^n_{k}\vee s)}^{t\wedge(\tau^n_{k+1}\vee s)}L_nf(Z_u^n)\d u\mid\Gc^n_{t\wedge(\tau^n_{k}\vee s)}\right]=0.
\]
Hence
\begin{align*}
&\E^n_\mu\left[f(Z^n_t)-f(Z^n_s)-\int_s^tL_nf(Z_u^n)\d u\mid\Gc^n_s\right]\\
& =\E^n_\mu\left[\sum_{k\geq 0}\left((Z^n_{t\wedge(\tau^n_{k+1}\vee s)})-f(Z^n_{t\wedge(\tau^n_{k}\vee s)})-\int_{t\wedge(\tau^n_{k}\vee s)}^{t\wedge(\tau^n_{k+1}\vee s)}L_nf(Z_u^n)\d u\right)\mid\Gc^n_s\right]\\
&=\sum_{k\geq 0}\E^n_\mu\left[\E^n_\mu\left[f(Z^n_{t\wedge(\tau^n_{k+1}\vee s)})-f(Z^n_{t\wedge(\tau^n_{k}\vee s)})-\int_{t\wedge(\tau^n_{k}\vee s)}^{t\wedge(\tau^n_{k+1}\vee s)}L_nf(Z_u^n)\d u\mid\Gc^n_{t\wedge(\tau^n_{k}\vee s)}\right]\mid\Gc^n_s\right]\\
&=0,
\end{align*}
so $\law_{\P^n_\mu}(Z^n)\in\Mc(L_n)$.
Hence applying the convergence theorem \ref{thmCvgLocFel} to $L_n$ and $L$, we obtain the equivalences between $a^\prime)$, 
$b^\prime)$ and $c)$. 

\noindent
{\sl Step 2.} To carry on with the proof we need the following technical result 
\begin{lemma}\label{lemCvgPbToLaw}
For $n\in\N$, let $(\Omega^n,\Gc^n,\P^n)$ be a probability space, let $Z^n:\Omega^n\to \Dloc(S)$  and $\Gamma^n:\Omega^n\to\rmC(\R_+,\R_+)$ be a increasing random  bijection. Define $\widetilde{Z}^n:=Z^n\circ \Gamma^n$.
Suppose that for each $\eps>0$ and $t\in\R_+$
\begin{align*}
\P^n\left(\sup_{s\leq t}|\Gamma^n_s-s|\geq\eps\right)\cv{n\to\infty}0.
\end{align*}
Then for any $\Pbf\in\Pcal(\Dloc(S))$,
\[
\law_{\P^n}(Z^n)\cv{n\to\infty}\Pbf\quad\Leftrightarrow\quad\law_{\P^n}(\widetilde{Z}^n)\cv{n\to\infty}\Pbf,
\]
where the limits are for the weak topology associated to the local Skorokhod topology.
\end{lemma}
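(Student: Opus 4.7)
My plan is to view the two random elements $Z^n$ and $\widetilde{Z}^n = Z^n\circ\Gamma^n$ defined on the common space $(\Omega^n,\Gc^n,\P^n)$ as a natural coupling of the two laws in question, and to show that they are close in the local Skorokhod topology with $\P^n$-probability tending to $1$. The desired equivalence then follows from the general fact that on a Polish metric space $(E,d_E)$, if random elements $X_n,Y_n$ satisfy $d_E(X_n,Y_n)\to 0$ in probability then $\law(X_n)\to\Pbf$ if and only if $\law(Y_n)\to\Pbf$; this is a routine consequence of the Portmanteau theorem, or equivalently of the characterization of weak convergence by the bounded Lipschitz metric.

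The heart of the argument is a deterministic, pathwise estimate that is \emph{uniform} in $x$. Fix a metric $d_{LS}$ inducing the local Skorokhod topology on $\Dloc(S)$ (such a metric is provided by Theorem 2.6 in \cite{GH017}). I claim there exists a functional $\rho$ on the set of increasing homeomorphisms of $\R_+$, with $\rho(\gamma)\to 0$ as $\gamma\to\id$ uniformly on compacts, such that
\[
d_{LS}(x,\, x\circ\gamma)\,\le\,\rho(\gamma)\qquad\text{for every }x\in\Dloc(S).
\]
This follows directly from the characterisation of convergence in $\Dloc(S)$ recalled in Section~2: taking there the competing time change $\lambda:=\gamma^{-1}$ gives $(x\circ\gamma)\circ\lambda=x$, so the first uniform distance $\sup_{s\le t}d(x_s,(x\circ\gamma)_{\lambda_s})$ vanishes identically, and only $\sup_{s\le t}|\gamma^{-1}(s)-s|$ remains. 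The latter is in turn controlled by $\sup_{s\le t'}|\gamma(s)-s|$ for $t'$ slightly larger than $t$, by a standard elementary property of increasing homeomorphisms close to the identity. Crucially, the bound $\rho(\gamma)$ does not see $x$ at all.

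Applying this pathwise with $x=Z^n(\omega)$, $\gamma=\Gamma^n(\omega)$ yields $d_{LS}(Z^n,\widetilde{Z}^n)\le\rho(\Gamma^n)$ on $\Omega^n$. The hypothesis $\P^n(\sup_{s\le t}|\Gamma^n_s-s|\ge\eps)\to 0$ for every $\eps,t>0$ implies $\rho(\Gamma^n)\to 0$ in $\P^n$-probability, hence $d_{LS}(Z^n,\widetilde{Z}^n)\to 0$ in $\P^n$-probability, and the Slutsky-type statement above closes the proof in both directions simultaneously.

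The only delicate point is the uniform bound displayed above. Since one is working with a genuinely local Skorokhod metric (explosions allowed and the topology measured only on compact sub-intervals of $S$), some care is needed both in choosing the compatible metric $d_{LS}$ and in passing from $\sup_{s\le t}|\gamma(s)-s|$ to $\sup_{s\le t}|\gamma^{-1}(s)-s|$ on arbitrary compact intervals. I expect this to be either a direct reference to a metric already built in the appendix (via results imported from \cite{GH117,GH017}) or a short verification once that concrete metric is written out; in either case it is purely a deterministic statement about time changes and involves no probability.
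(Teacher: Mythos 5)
Your high-level plan — couple $Z^n$ with $\widetilde{Z}^n$ on the same probability space and show they are close in the local Skorokhod metric with $\P^n$-probability tending to one, then invoke a Slutsky-type argument — is a genuinely different and in principle attractive route. The paper instead proves an abstract auxiliary lemma (Lemma~\ref{lmCvgPbToLaw}): on a Polish space $E$, if $\law(Z^n)\to\Pbf$, and for every \emph{compact} $K\subset E$ and every open neighbourhood $U$ of the diagonal one has $\P^n(Z^n\in K,(Z^n,\widetilde{Z}^n)\notin U)\to 0$, then $\law(\widetilde{Z}^n)\to\Pbf$. Tightness of $(\law(Z^n))_n$ (automatic from convergence on a Polish space) supplies the compact set $K$; openness of $U$ plus compactness of $K$ supply, via a soft contradiction argument, the statement that time-changes uniformly close to the identity keep pairs $(z,z\circ\lambda)$ inside $U$, \emph{uniformly over $z\in K$}. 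Notice that uniformity over $K$ is all the paper ever needs or proves.

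The genuine gap in your argument is precisely your claim that the estimate $d_{LS}(x,x\circ\gamma)\le\rho(\gamma)$ is \emph{uniform over all} $x\in\Dloc(S)$, not merely over compact sets. Your justification appeals to the sequential characterisation of convergence in $\Dloc(S)$ with the ansatz $\lambda=\gamma^{-1}$. That ansatz does show, for each fixed $x$, that $x\circ\gamma_n\to x$ whenever $\gamma_n\to\mathrm{id}$ uniformly on compacts — the first supremum vanishes identically and the second is just $\|\gamma_n^{-1}-\mathrm{id}\|_t$. But this is only \emph{pointwise} continuity of $\gamma\mapsto x\circ\gamma$ at the identity for each $x$; it does not by itself yield the \emph{equicontinuity} over $x$ needed for a metric bound of the form $d_{LS}(x,x\circ\gamma)\le\rho(\gamma)$ independent of $x$. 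Whether a compatible metric with this property exists depends on the concrete construction in \cite{GH017} and is not implied by the sequential characterisation alone; such a metric is not reproduced in the present paper and you do not verify the bound. Your last paragraph essentially flags this but defers it; as written it remains an unresolved step on which the whole Slutsky argument hinges. To close the gap, you could either (i) exhibit the specific metric and verify that taking $\lambda=\gamma^{-1}$ inside the defining infimum gives the uniform bound, or (ii) replace the global bound by the compact-set version — which \emph{does} follow softly from compactness and openness, exactly as in the paper — and then combine with tightness of $(\law(Z^n))_n$; option (ii) is what the paper's Lemma~\ref{lmCvgPbToLaw} packages cleanly.
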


We postpone the proof of this lemma and we finish the proof of the theorem. 
Let us note that for any $t\geq 0$ and $n\in\N$, $Y_{\lfloor t/\eps_n\rfloor}=Z^n_{\Gamma^n_t}$ with
\[
\Gamma^n_t:=\eps_n\left(\sum_{k=1}^{\lfloor t/\eps_n\rfloor}E_k+(t/\eps_n-\lfloor t/\eps_n\rfloor)E_{\lfloor t/\eps_n\rfloor+1}\right).
\]
Assuming that 
\begin{align}\label{eq1thmCvgLocFel2}
\forall t\geq 0,~\forall\eps>0,\quad\sup_{\mu\in \Pcal(S^\Delta)}\P^n_\mu
\left(\sup_{s\leq t}|\Gamma^n_s-s|\geq\eps\right)\cv{n\to\infty}0,
\end{align}
then, by the latter lemma we get $a)\Leftrightarrow a^\prime)$ and $b\Leftrightarrow b^\prime)$, so $a)\Leftrightarrow b)\Leftrightarrow c)$. 

Let us prove \eqref{eq1thmCvgLocFel2}. Fix  $t\geq 0$, $\eps >0$, $n\in\N$ and $\mu\in \Pcal(S^\Delta)$, then since $\Gamma^n$ is a continuous piecewise affine function, we have
\[
\sup_{s\leq t}|\Gamma^n_s-s|
\leq\sup_{\substack{k\in\N\\k\leq \lceil t/\eps_n\rceil}}|\Gamma^n_{k\eps_n}-k\eps_n|
=\sup_{\substack{k\in\N\\k\leq \lceil t/\eps_n\rceil}}\left|\eps_n\sum_{i=1}^kE_i-k\eps_n\right|
=\eps_n\sup_{\substack{k\in\N\\k\leq \lceil t/\eps_n\rceil}}|M_k|
\]
where  $M_k:=\sum_{i=1}^kE_i-k$  and $\lceil r\rceil$ denotes the smallest integer larger or equal than the real number $r$. Since the $E_i$ are independent random variables with exponential distribution $\Ec(1)$ we have
\[
\E^n_\mu[M_k^2]=k\E^n_\mu[(E_1-1)^2]=k.
\]
By Markov's inequality and by the maximal  Doob inequality applied to the discrete time martingale $(M_k)_k$ we can write
\begin{align*}
\P^n_\mu\left(\sup_{s\leq t}|\Gamma^n_s-s|\geq\eps\right)
&\leq\P^n_\mu\left(\eps_n\sup_{k\leq \lceil t/\eps_n\rceil}|M_k|\geq\eps\right)
\leq\frac{\E^n_\mu\left[\sup_{k\leq \lceil t/\eps_n\rceil}M_k^2\right]\eps_n^2}{\eps^2}\\
&\leq\frac{4\E^n_\mu\left[M_{\lceil t/\eps_n\rceil}^2\right]\eps_n^2}{\eps^2}
=\frac{4\lceil t/\eps_n\rceil\eps_n^2}{\eps^2}
\leq\frac{4(t+\eps_n)\eps_n}{\eps^2}.
\end{align*}
We deduce \eqref{eq1thmCvgLocFel2} and the proof of the theorem is complete except for 
the proof of Lemma \ref{lemCvgPbToLaw}.
\end{proof}

Before giving the proof of Lemma \ref{lemCvgPbToLaw} we state and prove a more general 
result:

\begin{lemma}\label{lmCvgPbToLaw}
Let $E$ be a Polish topological space, for $n\in\N$, let $(\Omega^n,\Gc^n,\P^n)$ be a probability space and consider  $Z^n,\widetilde{Z}^n:\Omega^n\to E$ random variables.
Suppose that for each compact subset $K\subset E$ and each open subset $U\subset E^2$ containing the diagonal $\{(z,z)\,|\,z\in E\}$,
\begin{align}\label{eq1LmCvgPbToLaw}
\P^n\left(Z^n\in K,~(Z^n,\widetilde{Z}^n)\not\in U\right)\cv{n\to\infty}0.
\end{align}
Then, for any $\Pbf\in\Pcal(E)$,
\[
\law_{\P^n}(Z^n)\cv{n\to\infty}\Pbf\quad\mbox{ implies }\quad\law_{\P^n}(\widetilde{Z}^n)\cv{n\to\infty}\Pbf,
\]
where the limits are for the weak topology on $\Pcal(E)$.
\end{lemma}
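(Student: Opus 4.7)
The plan is to verify the portmanteau criterion: for every bounded continuous $\phi\colon E\to\R$, I want to show $\E^n[\phi(\widetilde{Z}^n)]\to\int\phi\,\d\Pbf$. Since $\E^n[\phi(Z^n)]\to\int\phi\,\d\Pbf$ is already given by the hypothesis $\law_{\P^n}(Z^n)\cv{n\to\infty}\Pbf$, it suffices to prove that $\E^n[\phi(Z^n)]-\E^n[\phi(\widetilde{Z}^n)]\to 0$.

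The key idea is that the abstract assumption \eqref{eq1LmCvgPbToLaw}, applied to a suitable open set, plays the role that ``$d(Z^n,\widetilde{Z}^n)\to 0$ in probability'' would play in the usual Slutsky argument. Precisely, for any $\eps>0$, the map $\Phi\colon E^2\to\R$ defined by $\Phi(x,y):=\phi(x)-\phi(y)$ is continuous and vanishes on the diagonal, hence
\[
U_\eps:=\Phi^{-1}((-\eps,\eps))\subset E^2
\]
is an open subset containing the diagonal. Applying \eqref{eq1LmCvgPbToLaw} to $U_\eps$ yields, for every compact $K\subset E$,
\[
\P^n\bigl(Z^n\in K,\,|\phi(Z^n)-\phi(\widetilde{Z}^n)|\geq\eps\bigr)\cv{n\to\infty}0.
\]

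Next I would invoke tightness. Since $E$ is Polish and $\law_{\P^n}(Z^n)\cv{n\to\infty}\Pbf$, Prokhorov's theorem furnishes, for each $\eps>0$, a compact $K_\eps\subset E$ such that $\sup_n\P^n(Z^n\notin K_\eps)<\eps$. Splitting according to the events $\{Z^n\notin K_\eps\}$, $\{Z^n\in K_\eps,\,|\phi(Z^n)-\phi(\widetilde{Z}^n)|\geq\eps\}$, and the remaining event inside $\{Z^n\in K_\eps\}$, one obtains
\begin{align*}
\bigl|\E^n[\phi(Z^n)]-\E^n[\phi(\widetilde{Z}^n)]\bigr|
&\leq \eps+2\|\phi\|_\infty\,\P^n(Z^n\notin K_\eps)\\
&\quad+2\|\phi\|_\infty\,\P^n\bigl(Z^n\in K_\eps,\,|\phi(Z^n)-\phi(\widetilde{Z}^n)|\geq\eps\bigr).
\end{align*}
The last term tends to $0$ by the preceding step, while the middle one is bounded by $2\|\phi\|_\infty\eps$. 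Taking $\limsup_n$ and then $\eps\to 0$ yields the required convergence.

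The main obstacle is purely conceptual rather than computational: one must recognize that the abstract topological assumption \eqref{eq1LmCvgPbToLaw}, formulated via arbitrary open neighborhoods of the diagonal rather than via a metric, is already strong enough to reproduce the classical Slutsky-type argument, simply by applying it to the particular open neighborhood $U_\eps=\{|\phi(x)-\phi(y)|<\eps\}$ associated to each test function $\phi$. Once this observation is made, the remainder is a routine three-term $\eps$-decomposition based on the tightness supplied by Prokhorov's theorem.
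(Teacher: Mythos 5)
Your proof is correct and follows essentially the same route as the paper's: both apply the hypothesis with the open set $U_\eps=\{(x,y):|\phi(x)-\phi(y)|<\eps\}$, use tightness of $(\law_{\P^n}(Z^n))_n$ to get a compact $K_\eps$, and close with a three-way $\eps$-decomposition of $\E^n|\phi(Z^n)-\phi(\widetilde{Z}^n)|$. The only cosmetic difference is that the paper bounds $|\E^n[\phi(\widetilde{Z}^n)]-\int\phi\,\d\Pbf|$ directly by first inserting $\E^n[\phi(Z^n)]$, which is the same calculation.
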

\begin{proof}
Suppose that $\law_{\P^n}(Z^n)\cv{n\to\infty}\Pbf$ so for any bounded continuous function $f:E\to \R$, $\lim_{n\to\infty}\E^n[f(Z^n)]=\int f\d\Pbf$.  Since $E$ is a Polish 
space the sequence $(\law_{\P^n}(Z^n))_n$ is tight. Take an arbitrary $\eps>0$ and let $K$ be a compact subset of $E$ such that
\begin{align}\label{eq2LmCvgPbToLaw}
\forall n\in\N,\quad\P^n(Z^n\not\in K)\leq\eps.
\end{align}
By \eqref{eq1LmCvgPbToLaw} applied to $K$ and $U:=\left\{(z,\widetilde{z})\mid |f(\widetilde{z})-f(z)|< \eps\right\}$, we have
\[
\P^n\left(Z^n\in K,~|f(\widetilde{Z}^n)-f(Z^n)|\geq\eps\right)\cv{n\to\infty}0.
\]
Hence by \eqref{eq2LmCvgPbToLaw}
\begin{align*}
\left|\E^n[f(\widetilde{Z}^n)]-\int f\d\Pbf\right| \hspace{-3cm}&\hspace{3cm}
\leq \left|\E^n[f(Z^n)]-\int f\d\Pbf\right|+\E^n\left|f(\widetilde{Z}^n)-f(Z^n)\right|\\
&\leq \left|\E^n[f(Z^n)]-\int f\d\Pbf\right|
+\E^n\left[\left|f(\widetilde{Z}^n)-f(Z^n)\right|\1_{\{Z^n\in K,|f(\widetilde{Z}^n)-f(Z^n)|\geq\eps\}}\right]\\
&\quad +\E^n\left[\left|f(\widetilde{Z}^n)-f(Z^n)\right|\1_{\{Z^n\in K,|f(\widetilde{Z}^n)-f(Z^n)|<\eps\}}\right]
+\E^n\left[\left|f(\widetilde{Z}^n)-f(Z^n)\right|\1_{\{Z^n\not\in K\}}\right]\\
&\leq \left|\E^n[f(Z^n)]-\int f\d\Pbf\right| +2\|f\|\,\P^n\left(Z^n\in K,~|f(\widetilde{Z}^n)-f(Z^n)|\geq\eps\right) +\eps(1+2\|f\|).
\end{align*}
Letting successively $n\to\infty$ and $\eps\to 0$, we deduce that
\[
\E^n[f(\widetilde{Z}^n)]\cv{n\to\infty}\int f\d\Pbf,
\]
hence, since $f$ is an arbitrary bounded continuous function, we have $\law_{\P^n}(\widetilde{Z}^n)\cv{n\to\infty}\Pbf$.
\end{proof}
\begin{proof}[Proof of Lemma  \ref{lemCvgPbToLaw}]
We denote by $\widetilde{\Lambda}$ the space of increasing bijections $\lambda$ from $\R_+$ to $\R_+$, and for $t\in\R_+$ we denote $\|\lambda-{\rm id}\|_t:=\sup_{s\leq t}|\lambda_s -s|$.
Since
\[
\forall\lambda\in\widetilde{\Lambda},~\forall t\in\R_+,~\forall\eps>0,\quad\|\lambda-{\rm id}\|_{t+\eps}<\eps\Rightarrow\|\lambda^{-1}-{\rm id}\|_t<\eps,
\]
the hypotheses of Lemma \ref{lemCvgPbToLaw} are symmetric with respect to $Z$ and $\widetilde{Z}$, so it suffices to prove only one implication. Hence we suppose $\law_{\P^n}(Z^n)\cv{n\to\infty}\Pbf$ and, by applying Lemma \ref{lmCvgPbToLaw}, we prove $\law_{\P^n}(\widetilde{Z}^n)\cv{n\to\infty}\Pbf$. Let $K$ be a compact subset of $\Dloc(S)$ and $U$ be an open subset of $\Dloc(S)^2$ containing the diagonal $\{(z,z)\,|\,z\in \Dloc(S)\}$. We prove the assertion
\begin{equation}\label{eq1CorCvgPbToLaw}
\exists t\geq 0,~\exists\eps>0,~\forall z\in K,~\forall \lambda\in\widetilde{\Lambda},\quad\|\lambda-{\rm id}\|_t<\eps\Rightarrow (z,z\circ\lambda)\in U.
\end{equation}
If we suppose that \eqref{eq1CorCvgPbToLaw} is false, then we can find two sequences $(z^n)_n\in K^\N$ and $(\lambda^n)_n\in\widetilde{\Lambda}^\N$ such that, for all $n\in\N$, $(z^n,z^n\circ\lambda^n)\not\in U$ and for all $t\geq 0$, $\|\lambda_n-{\rm id}\|_t\to 0$, as $n\to\infty$. By compactness of $K$, possibly by taking a subsequence, we may suppose the existence of $z\in K$ such that $z^n\to z$ as $n\to\infty$. Then, it is straightforward to obtain
\[
U\not\ni(z^n,z^n\circ\lambda^n)\cv{n\to\infty}(z,z)\in U.
\]
This is a contradiction with the fact that $U$ is open, so we have proved \eqref{eq1CorCvgPbToLaw}. Take $t$ and $\eps$ given by \eqref{eq1CorCvgPbToLaw}, then
\begin{align*}
\P^n\left(Z^n\in K,~(Z^n,\widetilde{Z}^n)\not\in U\right)\leq \P^n\left(\|\Gamma^n-{\rm id}\|_t\geq\eps\right)\cv{n\to\infty}0.
\end{align*}
Hence by Lemma \ref{lmCvgPbToLaw}, $\law_{\P^n}(\widetilde{Z}^n)\cv{n\to\infty}\Pbf$.
\end{proof}

\section{Lévy-type processes: convergence and discrete scheme}
In this section we take $d\in\N^*$, we denote by $|\cdot|$ the Euclidean norm and  by $\R^{d\Delta}$  the one point compactification of $\R^d$. Let also $\rmC^\infty_c(\R^d)$ be the set of compactly supported infinitely differentiable functions from $\R^d$ to $\R$. We are interested in the dynamics which locally looks like as Lévy's dynamics. Let us introduce a linear functional on 
$\rmC^\infty_c(\R^d)$ which describes a dynamic in a neighbourhood of a point $a\in\R^d$: for $f\in\rmC^\infty_c(\R^d)$,
\begin{align*}
T_{\chi,a}(\delta,\gamma,\nu)f:=\frac{1}{2}\sum_{i,j=1}^d\gamma_{ij}\partial^{2}_{ij}f(a)+\delta\cdot\nabla f(a)
+\int_{\R^{d\Delta}}(f(b)-f(a)-\chi(a,b)\cdot\nabla f(a))\nu(\d b),
\end{align*}
where
\begin{hypotheses}
\Hypo{H1}{\sl
-- the compensation function $\chi:\R^d\times\R^{d\Delta}\to\R^d$ is a bounded measurable function satisfying, for any compact subset $K\subset \R^d$,
\[
\sup_{{b,c\in K,\,b\not= c}}\frac{|\chi(b,c)-(c-b)|}{|c-b|^2}<\infty;
\]}
\Hypo{H2$(a)$}{\sl
-- the drift vector is $\delta\in\R^d$, the diffusion matrix $\gamma\in\R^{d\times d}$ is symmetric positive semi-definite and the jump measure $\nu$ is a measure on $\R^{d\Delta}$ satisfying $\nu(\{a\})=0$ and
\[
\int_{\R^{d\Delta}}(1\wedge |b-a|^2)\nu(db)<\infty.
\]}
\end{hypotheses}

\noindent
Usually we take for compensation function
\begin{equation}\label{explchi}
\chi_1(a,b):=(b-a)/(1+|b-a|^2)\quad\mbox{ or }\quad\chi_2(a,b):=(b-a)\1_{|b-a|<1}.
\end{equation}
It is well known (see for instance Theorem 2.12 pp. 21-22 from \cite{Ho98}) that for any linear operator $L:\rmC^\infty_c(\R^d)\to\rmC(\R^d)$ satisfying the positive maximum principle and for any $\chi$ satisfying (H1): for each $a\in\R^d$ there exist $\delta(a)$, $\gamma(a)$ and $\nu(a)$ satisfying (H2($a$)) such that
\begin{equation*}
\forall f\in\rmC^\infty_c(\R^d),~\forall a\in\R^d,\quad Lf(a)=T_{\chi,a}(\delta(a),\gamma(a),\nu(a))f.
\end{equation*}
In the following we will call a such expression of $L$ a {\sl Lévy-type operator}.

In order to obtain a converse sentence and to get the convergence of sequences of Lévy-type operators, 
we have to made a more restrictive hypothesis on the couple $(\chi,\nu)$: for $a\in\R^d$
\begin{hypotheses}
\Hypo{H3($a$)}{\sl
-- the compensation function $\chi:\R^d\times\R^{d\Delta}\to\R^d$ is a bounded measurable function satisfying,  for any compact subset $K\subset \R^d$,
\[
\sup_{{b,c\in K,\,0<|c-b|\leq\eps}}\frac{|\chi(b,c)-(c-b)|}{|c-b|^2}\cv{\eps\to0}0,
\]
and $\nu\left(\left\{b\in\R^{d\Delta}\mid\chi\text{ is not continuous at }(a,b)\right\}\right)=0$.
}
\end{hypotheses}

\noindent
For example,  $\chi_1$ given in \eqref{explchi} satisfies (H3($a$)) for any $\nu$ and  $\chi_2(a,b)$ satisfies (H3($a$)) whenever $\nu\big(\{b\in\R^d\,|\,|b-a|=1\}\big)=0$.

The following theorem contains a necessary and sufficient condition for the  convergence of sequences Lévy-type operators (and processes) in terms of their
L\'evy triplet. Before we introduce some notations.
\begin{itemize}
\item Let  $\chi:\R^d\times\R^{d\Delta}\to\R^d$ be a compensation function. For each $a\in\R^d$ let  $(\delta(a),\gamma(a),\nu(a))$ and $(\chi,\nu(a))$ be satisfying 
respectively {\rm (H2($a$))} and {\rm (H3($a$))}. Set
\begin{equation}\label{levytypeoperator}
Lf(a):=T_{\chi,a}(\delta(a),\gamma(a),\nu(a))f,\hspace{.7cm}\text{for any }f\in\rmC^\infty_c(\R^d).
\end{equation}
\item For each $n\in\N$ and $a\in\R^d$ let $(\delta_n(a),\gamma_n(a),\nu_n(a))$ be satisfying {\rm (H2($a$))}. Set
\begin{equation}
L_nf(a):=T_{\chi,a}(\delta_n(a),\gamma_n(a),\nu_n(a))f,\hspace{.7cm}\text{for any }f\in\rmC^\infty_c(\R^d).
\end{equation}
\item For each $n\in\N$ and $a\in\R^d$ let $\mu_n(a)$ be a probability measure on $\R^{d\Delta}$ and let $\eps_n>0$ be a sequence  converging to $0$. Set
\begin{equation}
T_nf(a):=\int f(b)\mu_n(a,\d b),\hspace{.7cm}\text{for any }f\in\rmC(\R^{d\Delta}).
\end{equation}
\end{itemize}
\begin{theorem}[Characterisation of the convergence toward Lévy-type operators]\label{thmCVGLT}\begin{samepage}~\\
1) The function $Lf$ is continuous for any $f\in\rmC^\infty_c(\R^d)$ if and only if
\begin{itemize}
\item $a\mapsto\delta(a)$ is continuous on $\R^d$,
\item $a\mapsto\int f(b)\nu(a,\d b)$ is continuous on the interior of $\{f=0\}\cap\R^d$, for any $f\in\rmC(\R^{d\Delta})$,
\item $a\mapsto\gamma_{ij}(a)+\int\chi_i(a,b)\chi_j(a,b)\nu(a,\d b)$ is continuous on $\R^d$, for any $1\leq i,j\leq d$.
\end{itemize}
\end{samepage}
\pagebreak[2]
\begin{samepage}
2) Assume that $Lf$ is continuous
for any $f\in\rmC^\infty_c(\R^d)$.  The uniform  convergence on compact sets, $L_nf\to Lf$, as $ n\to\infty$, holds for all $f\in\rmC^\infty_c(\R^d)$ if and only if
\begin{itemize}
\item $\delta_n(a)\to\delta(a)$, uniformly for $a$ varying in compact subsets of $\R^d$,
\item $\int f(b)\nu_n(a,\d b)\to\int f(b)\nu(a,\d b)$, uniformly for $a$ varying in compact subsets of the interior of $\{f=0\}\cap\R^d$, for any $f\in\rmC(\R^{d\Delta})$,
\item $\gamma_{n,ij}(a)+\int(\chi_i\chi_j)(a,b)\nu_n(a,\d b)\to\gamma_{ij}(a)+\int(\chi_i\chi_j)(a,b)\nu(a,\d b)$, uniformly for $a$ varying in compact subsets of $\R^d$, for any $1\leq i,j\leq d$.
\end{itemize}
\end{samepage}
\pagebreak[2]
\begin{samepage}
3) Assume that $Lf$ is continuous
for any $f\in\rmC^\infty_c(\R^d)$.
The uniform  convergence on compact sets,  $\eps_n^{-1}(T_nf-f)\to Lf$, as $n\to\infty$, holds for all $f\in\rmC^\infty_c(\R^d)$ if and only if
\begin{itemize}
\item $\eps_n^{-1}\int_{\R^{d\Delta}\setminus\{a\}} \chi(a,b)\mu_n(a,\d b)\to\delta(a)$, uniformly for $a$ varying in compact subsets of $\R^d$,
\item $\eps_n^{-1}\int f(b)\mu_n(a,\d b)\to\int f(b)\nu(a,\d b)$, uniformly for $a$ varying in the compact subsets of the interior of $\{f=0\}\cap\R^d$, for any $f\in\rmC(\R^{d\Delta})$,
\item $\eps_n^{-1}\int_{\R^{d\Delta}\setminus\{a\}}(\chi_i\chi_j)(a,b)\mu_n(a,\d b)\to\gamma_{ij}(a)+\int(\chi_i\chi_j)(a,b)\nu(a,\d b)$, uniformly for $a$ varying in the compact subsets of $\R^d$, for any $1\leq i,j\leq d$.
\end{itemize}
\end{samepage}
\end{theorem}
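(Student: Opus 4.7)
The whole theorem rests on a single algebraic decomposition. For $f\in\rmC^\infty_c(\R^d)$, introduce the modified diffusion $\tilde\gamma_{ij}(a):=\gamma_{ij}(a)+\int\chi_i(a,b)\chi_j(a,b)\,\nu(a,\d b)$, finite by (H2($a$)), and the remainder
\[
R_{f,a}(b):=f(b)-f(a)-\chi(a,b)\cdot\nabla f(a)-\tfrac12\sum_{i,j}\chi_i(a,b)\chi_j(a,b)\partial^2_{ij}f(a).
\]
Adding and subtracting the quadratic compensation inside the integral in the definition of $T_{\chi,a}$ gives
\[
Lf(a)=\delta(a)\cdot\nabla f(a)+\tfrac12\sum_{i,j}\tilde\gamma_{ij}(a)\,\partial^2_{ij}f(a)+\int R_{f,a}(b)\,\nu(a,\d b),
\]
whose three summands correspond term by term to the three bullets of each of the three parts. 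Taylor expansion plus (H1) yields $|R_{f,a}(b)|\leq C_f(1\wedge|b-a|^2)$, and (H3($a$)) sharpens this to $|R_{f,a}(b)|=o(|b-a|^2)$ as $b\to a$, with continuity of $(a,b)\mapsto R_{f,a}(b)$ wherever $\chi$ is continuous. For part 3 the analogous identity is
\[
\frac{T_nf(a)-f(a)}{\eps_n}=\alpha_n(a)\cdot\nabla f(a)+\tfrac12\sum_{i,j}\beta^n_{ij}(a)\,\partial^2_{ij}f(a)+\eps_n^{-1}\!\int R_{f,a}(b)\,\mu_n(a,\d b),
\]
where $\alpha_n(a):=\eps_n^{-1}\!\int_{\R^{d\Delta}\setminus\{a\}}\!\chi(a,b)\,\mu_n(a,\d b)$ and similarly for $\beta^n_{ij}$, the possible atom at $a$ contributing nothing since $\chi(a,a)=0$.

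\textbf{Necessity.} Assuming the left-hand side of each part, I extract each bullet by plugging in test functions isolating single summands. Taking $f\in\rmC^\infty_c(\R^d)$ supported in a small ball near a chosen $a_0$ but vanishing on a smaller ball around $a_0$ kills all derivative terms at $a_0$ and yields $Lf(a_0)=\int f\,\nu(a_0,\d b)$, which, after a density argument, gives the second bullet for arbitrary $g\in\rmC(\R^{d\Delta})$ on the interior of $\{g=0\}$. With the second bullet in hand, using $f$ that agrees with $(x_i-a_{0,i})(x_j-a_{0,j})$ near $a_0$ (times a smooth cutoff) isolates $\tilde\gamma_{ij}(a_0)$ modulo a term of the form $\int[f(b)-\chi_i\chi_j(a_0,b)]\,\nu(a_0,\d b)$, whose non-$f$ part is handled by approximating $\chi_i\chi_j(a_0,\cdot)$ on sets bounded away from $a_0$ by compactly supported continuous functions and invoking the second bullet. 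Finally, using $f$ that agrees with $x_k-a_{0,k}$ near $a_0$ isolates $\delta_k(a_0)$.

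\textbf{Sufficiency.} Reading the decomposition from right to left, the drift and diffusion summands are controlled directly by the first and third bullets. For the jump integral, I fix a compact $K\ni a$ and, for $\eta>0$, split
\[
\int R_{f,a}\,\nu(a,\d b)=\int_{|b-a|<\eta}\!R_{f,a}\,\nu(a,\d b)+\int_{|b-a|\geq\eta}\!R_{f,a}\,\nu(a,\d b).
\]
The near-diagonal piece is bounded by $\bigl(\sup_{|b-a|<\eta}|R_{f,a}(b)|/|b-a|^2\bigr)\sum_{i,j}\int\chi_i\chi_j\1_{|b-a|<\eta}\,\nu(a,\d b)$, uniformly small in $a\in K$ as $\eta\to 0$ thanks to (H3($a$)) and the uniform control on $\tilde\gamma$ from the third bullet. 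The far-diagonal piece, after multiplication by a smooth cutoff vanishing near $\{b=a\}$, is the integral of a bounded continuous function of $b$ that vanishes near $a$, handled by the second bullet. Parts 2 and 3 follow identically, replacing $\nu(a,\d b)$ by $\nu_n(a_n,\d b)$ or $\eps_n^{-1}\mu_n(a_n,\d b)$; the uniform $n$-independent control on $\sum_{i,j}\int\chi_i\chi_j\1_{|b-a|<\eta}\,\d\nu_n$ (resp.\ on $\eps_n^{-1}\sum_{i,j}\int\chi_i\chi_j\1_{|b-a|<\eta}\,\d\mu_n$) that is required is precisely provided by the convergence asserted in the third bullet.

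\textbf{Main obstacle.} The delicate point is this jump integral: its integrand $R_{f,a}$ is neither compactly supported in $b$ (because $\chi$ is not) nor continuous at $b=a$, so vague convergence cannot be invoked directly. The $\eta$-splitting works only because the third bullet provides tightness-like control on $\chi_i\chi_j\,\d\nu_n$ near the diagonal that is precisely matched by the $o(|b-a|^2)$ decay of $R_{f,a}$ from (H3($a$)); without (H3($a$)), $R_{f,a}$ is merely $O(|b-a|^2)$, the $\eta\to 0$ argument collapses, and only a one-sided implication survives --- this explains why (H3($a$)) is strictly stronger than (H1). In part 3 there is the further subtlety that $\eps_n^{-1}\mu_n$ becomes infinite in the limit near $a$, so the second bullet must be understood as vague convergence on $\R^{d\Delta}$ minus $a$-dependent neighbourhoods of $a$, with cutoffs chosen coherently across $a\in K$.
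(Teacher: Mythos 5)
Your decomposition $Lf(a)=\delta(a)\cdot\nabla f(a)+\tfrac12\sum\tilde\gamma_{ij}(a)\partial^2_{ij}f(a)+\int R_{f,a}\,\d\nu(a)$ is the right one and your sufficiency sketch (the $\eta$-splitting of the jump integral, with the near-diagonal piece killed by $(\mathrm{H3})$ and by the uniform control on $\int\chi_i\chi_j\1_{|b-a|<\eta}\,\d\nu_n$ coming from the third bullet) is essentially the content of part~(ii) of the paper's Lemma~\ref{lmCvMes}. But the necessity direction, as you have set it up, is circular.

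Concretely: you want to read off $\tilde\gamma_{ij}(a_0)$ (resp.\ $\delta(a_0)$) from $Lf(a_0)$ by plugging in a test function $f$ that is quadratic (resp.\ linear) near $a_0$, and then you must control the leftover term $\int R_{f,a}\,\nu(a,\d b)$ as $a$ varies. Because $\chi(a,\cdot)$ is \emph{not} equal to $b-a$ near $a$ but only differs from it by $o(|b-a|^2)$, the remainder $R_{f,a}$ does not vanish on any neighbourhood of $a$; it is merely $o(|b-a|^2)$. Your $\eta$-splitting then needs, already at this stage, the locally uniform bound $\sup_a\int|b-a|^2\1_{|b-a|<\eta}\,\nu(a,\d b)<\infty$ (or $\sup_n\int|b-a_n|^2\1_{|b-a_n|\leq\eta}\,\nu_n(\d b)<\infty$ in parts~2 and~3). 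But that bound is precisely what you obtain \emph{from} the convergence of $\tilde\gamma_n$, i.e.\ from the third bullet, which at this point you are still trying to prove. Your phrase ``handled by approximating $\chi_i\chi_j(a_0,\cdot)$ on sets bounded away from $a_0$ by compactly supported continuous functions and invoking the second bullet'' only deals with the far-from-diagonal piece; the near-diagonal piece is where the missing bound lives.

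The paper breaks this circularity with an auxiliary step you do not have: it replaces $\chi$ by the compactly supported, exactly-affine compensation $\widetilde{\chi}(a,b)=\theta(|b-a|)(b-a)\1_{b\neq\Delta}$, and rewrites $T_{\chi,a_n}=T_{\widetilde\chi,a_n}$ with modified drift $\widetilde\delta_n$. With $\widetilde\chi$ and a linear or quadratic test function centred at $a_\infty$, the remainder vanishes \emph{identically} in a fixed neighbourhood of $a_\infty$ (it is not merely $o(|b-a_n|^2)$), and it has compact support in $b$, so part~(i) of Lemma~\ref{lmCvMes} extracts $\widetilde\delta_n\to\widetilde\delta_\infty$ and $\widetilde\gamma_{n,ij}\to\widetilde\gamma_{\infty,ij}$ cleanly. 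From positivity of $\gamma_n$ and $\sum_i\widetilde\gamma_{n,ii}\geq\int|b-a_n|^2\1_{|b-a_n|\leq1}\,\nu_n(\d b)$, one \emph{then} obtains the uniform second-moment bound, which finally authorises part~(ii) of Lemma~\ref{lmCvMes} and the conversion $\widetilde\chi\leadsto\chi$, i.e.\ $\delta_n\to\delta_\infty$ and $\gamma_{n,ij}+\int\chi_i\chi_j\,\d\nu_n\to\gamma_{\infty,ij}+\int\chi_i\chi_j\,\d\nu_\infty$. You need to insert this intermediate change of compensation (or some equivalent device to establish the second-moment bound without presupposing the third bullet) for your necessity argument to close; as written it does not.

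One minor point: the paper proves the theorem by reducing uniform convergence on compacts to pointwise convergence along sequences $a_n\to a_\infty$ (Proposition~\ref{propCVGLT}), which is the cleaner level at which to run the measure-theoretic lemma; your sketch mixes the two levels, which is fine in spirit but worth being explicit about, since the lemma really wants sequences.
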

\begin{remark}
 Thanks to Theorems \ref{thmCvgLocFel} and \ref{thmCvgLocFel2} we can deduce from Theorem \ref{thmCVGLT} sharp results of convergence for the processes associated to $L_n$, $T_n$ and $L$. In particular, the third part of  Theorem \ref{thmCVGLT} one could be seen as an improvement of the classical Donsker theorem, and, for instance allows us to simulate Lévy-type processes. We illustrate this fact by the following example.
\end{remark}

\begin{example}[Symmetric stable type operator]
Let $c\in\rmC(\R^d,\R_+)$ and $\alpha\in\rmC(\R^d,(0,2))$ be and denote, for $f\in\rmC_0(\R^d)$ and $a\in\R^d$, 
\[
Lf(a):=\int_{\R^d}(f(b)-f(a)-(b-a)\mycdot\nabla f(a)\1_{|b-a|\leq 1})c(a)|b-a|^{-d-\alpha(a)}\d b.
\]
As a consequence of the first part of Theorem \ref{thmCVGLT}, $L$ maps $\rmC_0(\R^d)$ to $\rmC(\R^d)$. For $a\in\R^d$ and $n\in\N^*$, define the probability measure
\[
\mu_n(a,\d b):=\frac{c(a)}{n}|b-a|^{-d-\alpha(a)}\1_{|b-a|\geq\eps_n(a)}\d b,\quad\text{ with }\quad\eps_n(a):=\left(\frac{c(a)S_{d-1}}{n\alpha(a)}\right)^{1/\alpha(a)}
\]
and where
$
S_{d-1}=2\pi^{d/2}/\Gamma(d/2)
$
is the measure of the unity sphere in $\R^d$.
Thanks to the third part of Theorem \ref{thmCVGLT}, for any $f\in\rmC^\infty_c(\R^d)$,
\[
\lim_{n\to\infty}n\left(\int f(b)\mu_n(a,\d b)-f(a)\right)=Lf(a),\,\mbox{ uniformly for $a$ in compact subsets of $\R^d$}.
\]
To go further, it is straightforward that for any $a\in\R^d$ and $n\in\N^*$, $\mu_n(a)$ is the distribution of the random variable
\[
a+Q\left(\frac{c(a)S_{d-1}}{n\alpha(a)U}\right)^{1/\alpha(a)},\;\text{ with independent }\; Q\sim\Uc(\mathbb{S}^{d-1}),~U\sim\Uc([0,1]),
\]
where $\Uc(\mathbb{S}^{d-1})$ and $\Uc([0,1])$ are the uniform distributions, respectively on the unity sphere of $\R^d$ and on $[0,1]$. To simulate the discrete time locally Feller processes associated to $(\mu_n(a))_a$ we can proceed as follows. Let $(Q_k,U_k)_k$ be an i.i.d. sequence of  random variables with distributions $\Uc(\mathbb{S}^{d-1})\otimes\Uc([0,1])$ and define, for $n\in\N^*$ and $k\in\N$,
\[
Z^n_{k+1}:=Z^n_k+Q_k\left(\frac{c(Z^n_k)S_{d-1}}{n\alpha(Z^n_k)U_k}\right)^{1/\alpha(Z^n_k)}.
\]
Hence thanks to Theorem \ref{thmCvgLocFel2}, if the martingale local problem associated to $L$ is well-posed, then $(Z^n_{\lfloor nt\rfloor})_t$ converges in distribution to the solution of the martingale local problem.
\end{example}
\begin{remark}
This example is adaptable when we want to simulate more general Lévy-type processes. The heuristics is as follows: first we approximate the Lévy measure by finite measures, we renormalise them and then we convolute with a Gaussian measure having  
well chosen parameters.
\end{remark}

Before proceeding to the proof Theorem \ref{thmCVGLT}, we give a second approximation result inspired 
from  \cite{BSW13}, Theorem 7.6 p. 172.
Let $L:\rmC_c^\infty(\R^d)\to\rmC(\R^d)$ be an operator satisfying the positive 
maximum principle. Let the translation of $f$ by $h\in\R^d$ be the mapping $\tau_hf(a)=f(a+h)$. For $a_0\in\R^d$, we introduce the operator 
\begin{equation}\label{eqLevOp}
L(a_0):\rmC^\infty_c(\R^d)\to\rmC_0(\R^d)\quad\mbox{ by }\quad L(a_0)f(a):=L(\tau_{a-a_{0}}f)(a_0).
\end{equation}
Clearly $Lf(a)=L(a)f(a)$.
Since $L(a_0)$ is invariant with respect  to the translation and satisfies the positive 
maximum principle then its closure  in $\rmC_0(\R^d)\times\rmC_0(\R^d)$ is the $\rmC_0\times\rmC_0$-generator of a Lévy family (see for instance, Section 2.1 pp. 32-41 from \cite{BSW13}). We denote by $(T_t(a_0))_{t\geq 0}$ its Feller semi-group and we state:
\begin{theorem}[Approximation with Lévy increments]\label{thmAppLT2}~\\
Let $(\eps_n)_n$ be a sequence of positive numbers such that $\eps_n\to 0$ and define the transition operators $T_n$ by
\[
T_nf(a):=T_{\eps_n}(a)f(a),\quad\mbox{ for }\;f\in\rmC_0(\R^d).
\]
Then, for any $f\in\rmC^\infty_c(\R^d)$,
\[
\frac{1}{\eps_n}(T_nf-f)\cv{n\to\infty}Lf,\quad\mbox{ uniformly on compact sets.}
\]
\end{theorem}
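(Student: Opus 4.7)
The plan is to pass to the Fourier side, exploiting that each $L(a_0)$ is a translation-invariant operator on $\rmC^\infty_c(\R^d)$. By the L\'evy--Khintchine representation (Theorem~2.12 of \cite{Ho98}), it acts as a Fourier multiplier with symbol
\begin{equation*}
\psi_{a_0}(\xi) = i\delta(a_0)\cdot\xi - \tfrac{1}{2}\textstyle\sum_{i,j}\gamma_{ij}(a_0)\xi_i\xi_j + \int\big(e^{i\xi\cdot y} - 1 - i\xi\cdot\chi(a_0,a_0+y)\big)\,\nu(a_0,a_0+dy),
\end{equation*}
and its Feller semigroup is then the convolution semigroup with Fourier transform $e^{t\psi_{a_0}(\xi)}$. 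For $f\in\rmC^\infty_c(\R^d)$, $\hat f$ is rapidly decreasing, and using $Lf(a)=L(a)f(a)$ we obtain
\begin{equation*}
\frac{T_nf(a) - f(a)}{\eps_n} - Lf(a) = (2\pi)^{-d/2}\int \hat f(\xi)\,e^{i\xi\cdot a}\left(\frac{e^{\eps_n\psi_a(\xi)} - 1}{\eps_n} - \psi_a(\xi)\right)d\xi.
\end{equation*}

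Contractivity of $T_t(a)$ on $\rmC_0(\R^d)$ forces $\Re\psi_a(\xi)\leq 0$, so the elementary estimate $|e^z - 1 - z|\leq |z|^2/2$ (valid for $\Re z\leq 0$) yields $|(e^{\eps_n\psi_a(\xi)}-1)/\eps_n - \psi_a(\xi)|\leq(\eps_n/2)|\psi_a(\xi)|^2$. Hence, for any compact $K\subset\R^d$,
\begin{equation*}
\sup_{a\in K}\left|\frac{T_nf(a) - f(a)}{\eps_n} - Lf(a)\right| \leq \frac{\eps_n}{2(2\pi)^{d/2}}\int|\hat f(\xi)|\sup_{a\in K}|\psi_a(\xi)|^2\,d\xi,
\end{equation*}
so the theorem reduces to the uniform symbol estimate: for each compact $K\subset\R^d$ there is $C_K<\infty$ with $|\psi_a(\xi)|\leq C_K(1+|\xi|^2)$ for all $a\in K$ and $\xi\in\R^d$. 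Combined with the rapid decay of $\hat f$, this yields the claimed convergence with explicit rate $O(\eps_n)$.

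The main technical obstacle is this uniform symbol bound. Splitting the jump integral at $|y|=1$ and using $|e^{i\xi y} - 1 - i\xi y|\leq|\xi|^2|y|^2/2$ together with $|y - \chi(a,a+y)|\leq C'_K|y|^2$ (from {\rm (H1)}) for $|y|\leq 1$, and $|e^{i\xi y}-1|\leq 2$ together with boundedness of $\chi$ for $|y|>1$, reduces the problem to local boundedness on $K$ of the four scalar quantities $|\delta(a)|$, $\|\gamma(a)\|$, $\int_{|b-a|\leq 1}|b-a|^2\,\nu(a,db)$ and $\nu(a,\{|b-a|>1\})$. Under the hypothesis $Lf\in\rmC(\R^d)$ for every $f\in\rmC^\infty_c(\R^d)$, Part~1 of Theorem~\ref{thmCVGLT} provides continuity in~$a$ of $\delta(a)$, of $\gamma_{ij}(a)+\int\chi_i\chi_j\,d\nu(a,\cdot)$, and of $\int g\,d\nu(a,\cdot)$ for every $g\in\rmC(\R^{d\Delta})$ vanishing on a neighbourhood of $a$. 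A finite cover of $K$ by small balls with bump functions in $\rmC(\R^{d\Delta})$ supported outside them yields the local boundedness of $\nu(a,\{|b-a|>1\})$; positive semi-definiteness of both $\gamma$ and $\big(\int\chi_i\chi_j\,d\nu\big)_{ij}$, joined with continuity of their sum, handles $\|\gamma(a)\|$ and, via the comparison $|\chi(a,b)|\geq |b-a| - C|b-a|^2$ near the diagonal from {\rm (H1)}, also $\int_{|b-a|\leq 1}|b-a|^2\,\nu(a,db)$. This completes the plan.
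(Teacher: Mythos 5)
Your proof is correct, and it takes a genuinely different route from the paper's.

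The paper's argument is probabilistic: it fixes $a_n\to a_\infty$, takes $\Pbf_n$ the law of the frozen Lévy process started at $a_n$ (well-posedness is automatic since each $L(a_n)$ is translation-invariant and satisfies the positive maximum principle), writes $(T_nf_0(a_n)-f_0(a_n))/\eps_n=\Ebf_n\,\eps_n^{-1}\int_0^{\eps_n}L(a_n)f_0(X_s)\,\d s$, and then controls the error by splitting on the event $\{\tau^U<\eps_n\}$. The uniform-on-compacts convergence $L(a_n)f\to L(a_\infty)f$ comes from Proposition~\ref{propCVGLT} (or Part~2 of Theorem~\ref{thmCVGLT}), and the exit probability $\Pbf_n(\tau^U<\eps_n)\to0$ is handled by the technical Lemma~\ref{lmUQC} on uniform continuity along stopping times. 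Your proof instead works entirely on the Fourier side: since each $L(a_0)$ is a Fourier multiplier with symbol $\psi_{a_0}$ and $T_t(a_0)$ is the corresponding convolution semigroup, the difference quotient is $(2\pi)^{-d/2}\int\hat f(\xi)e^{i\xi a}\big((e^{\eps_n\psi_a(\xi)}-1)/\eps_n-\psi_a(\xi)\big)\d\xi$; contractivity gives $\Re\psi_a\le0$, the elementary bound $|e^z-1-z|\le|z|^2/2$ for $\Re z\le0$ produces the $O(\eps_n)$ factor, and the only real work is the uniform symbol estimate $\sup_{a\in K}|\psi_a(\xi)|\le C_K(1+|\xi|^2)$, which you reduce to local boundedness of the Lévy triplet via Part~1 of Theorem~\ref{thmCVGLT}. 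This is sound: the local bounds on $|\delta|$, $\|\gamma\|$, $\int_{|b-a|\le1}|b-a|^2\,\nu(a,\d b)$ and $\nu(a,\{|b-a|>1\})$ (including the mass at $\Delta$, which only contributes a bounded killing term to the real part of $\psi_a$) all follow from the continuity statements, the bump-function covering argument, and the comparison between $|\chi_1(a,b)|$ and $|b-a|$ near the diagonal that (H1) provides. Your route avoids the probabilistic machinery (no martingale problem, no stopping-time lemma) and yields an explicit $O(\eps_n)$ rate uniformly on compacts; the paper's route is less explicit but reuses the toolbox it has already built, which is why it is preferred there. One detail worth making explicit if you write this out fully is the treatment of the middle annulus $\{\eta<|b-a|\le1\}$ in the second-moment bound, which you handle by the same bump-function argument as the tail $\nu(a,\{|b-a|>1\})$ but with radius $\eta$ — at the moment this step is only implicit in the phrase "near the diagonal".
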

\begin{remark}\label{rkAppLT2}
If the martingale local problem associated to $L$ is well-posed, by  Theorem \ref{thmCvgLocFel2}, one deduces the convergence of the associated probability families.
\end{remark}
Excepting the fact that the present convergence is for the local Skorokhod topology, Theorem \ref{thmAppLT2} is an improvement of Theorem 7.6 p. 172 from \cite{BSW13}. More precisely,  
we do not need  that the closure of $L$ is a generator of a Feller semi-group, 
but we only suppose that the martingale local problem is well-posed.
We postpone the proof of the latter theorem to the end of this section.
\medskip

The proof of Theorem \ref{thmCVGLT} is obtained as a straightforward application of the following:
\begin{proposition}\label{propCVGLT}
For each $n\in\N\cup\{\infty\}$ take $a_n\in\R^d$ such that $a_n\to a_\infty$ and consider $(\delta_n,\gamma_n,\nu_n)$ satisfying {\rm(H2($a_n$))}. Let also $\chi$ be such that the couple $(\chi,\nu_\infty)$ satisfies {\rm(H3($a_\infty$))}.
Then
\begin{align}\label{eqPropCVGLT}
\forall f\in\rmC^\infty_c(\R^d),\quad T_{\chi,a_n}(\delta_n,\gamma_n,\nu_n)f\cv{n\to\infty} T_{\chi,a_{\infty}}(\delta_{\infty},\gamma_{\infty},\nu_{\infty})f,
\end{align}
if and only if the following three conditions hold
\begin{align}\label{equivPropCVGLT}
\left\{\begin{array}{l}
\delta_n\cv{n\to\infty}\delta_\infty,\\\\
\forall f\in\rmC(\R^{d\Delta})\mbox{ vanishing in a neighbourhood of $a_\infty$,}
\int f(b)\nu_n(\d b)\cv{n\to\infty}\int f(b)\nu_\infty(\d b),\\\\
\Big(\gamma_{n,ij}+\int(\chi_i\chi_j)(a_n,b)\nu_n(\d b)\Big)_{i,j}\,\cv{n\to\infty}\Big(\gamma_{\infty,ij}+\int(\chi_i\chi_j)(a_\infty,b)\nu_\infty(\d b)\Big)_{i,j}\,.
\end{array}\right.
\end{align}
\end{proposition}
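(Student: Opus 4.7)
The plan is to rewrite the operator in a ``centred'' form that makes the three quantities from \eqref{equivPropCVGLT} explicit. Adding and subtracting $\tfrac{1}{2}\sum_{ij}\chi_i\chi_j(a,b)\partial^2_{ij}f(a)$ inside the nonlocal integral yields
\begin{equation*}
T_{\chi,a_n}(\delta_n,\gamma_n,\nu_n)f = \delta_n\cdot\nabla f(a_n) + \tfrac{1}{2}\sum_{i,j}\Big(\gamma_{n,ij}+\int\chi_i\chi_j(a_n,b)\,\nu_n(\d b)\Big)\partial^2_{ij}f(a_n) + \int R_n(b)\,\nu_n(\d b),
\end{equation*}
with $R_n(b):=f(b)-f(a_n)-\chi(a_n,b)\cdot\nabla f(a_n)-\tfrac{1}{2}\sum_{ij}\chi_i\chi_j(a_n,b)\partial^2_{ij}f(a_n)$, and analogously for $n=\infty$. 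In this form the first two pieces pair directly with the first and third lines of \eqref{equivPropCVGLT}.

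For the implication from \eqref{equivPropCVGLT} to \eqref{eqPropCVGLT}, the drift and quadratic terms converge by these two conditions combined with the continuity of $\nabla f$ and $\partial^2_{ij}f$ together with $a_n\to a_\infty$. The core task is $\int R_n\,\d\nu_n\to\int R_\infty\,\d\nu_\infty$, which I would split at a ball $B_\delta(a_\infty)$. On $\{|b-a_\infty|\le\delta\}$, a Taylor expansion of $f$ combined with (H3) provides $|R_n(b)|\le\omega(\delta)|b-a_n|^2$ with $\omega(\delta)\to 0$, uniformly in $n$ for $n$ large. Moreover, (H3) also gives $|b-a_n|\le 2|\chi(a_n,b)|$ for $|b-a_n|$ small, so $\int_{|b-a_n|\le\delta}|b-a_n|^2\,\nu_n(\d b)\le 4\int|\chi(a_n,\cdot)|^2\,\d\nu_n$, which is uniformly bounded by the third condition (and $\gamma_n\ge 0$). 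Hence the near-field is $O(\omega(\delta))$ uniformly in $n$. On the complement $\{|b-a_\infty|>\delta\}$, the $\nu_\infty$-a.e.\ continuity of $\chi$ at $(a_\infty,\cdot)$ from (H3) yields $R_n\to R_\infty$ $\nu_\infty$-a.e., and one can approximate the $n$-dependent integrand uniformly by a fixed continuous function vanishing near $a_\infty$ to which the vague convergence from the second condition applies.

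For the reverse implication I would choose test functions that isolate each of the three conditions. For the second condition, take $f\in\rmC^\infty_c(\R^d)$ supported outside a neighbourhood $V$ of $a_\infty$; for $n$ large $f(a_n)=\nabla f(a_n)=\partial^2_{ij}f(a_n)=0$, so $T_{\chi,a_n}(\delta_n,\gamma_n,\nu_n)f=\int f\,\d\nu_n$ and \eqref{eqPropCVGLT} delivers $\int f\,\d\nu_n\to\int f\,\d\nu_\infty$. An approximation step (truncation near $\Delta$, with tightness provided by the boundedness of $\int|\chi|^2\,\d\nu_n$ from the third condition) extends this to all $f\in\rmC(\R^{d\Delta})$ vanishing near $a_\infty$. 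For the first condition, pick $f\in\rmC^\infty_c(\R^d)$ agreeing with a prescribed linear functional on a neighbourhood of $a_\infty$ and combine with the second condition to read off $\delta_n\to\delta_\infty$. For the third, pick $f$ coinciding with a quadratic near $a_\infty$ with $\nabla f(a_\infty)=0$ and argue analogously.

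The main obstacle is the far-field step in the forward direction: $R_n$ depends on $n$ through both $a_n$ and $\chi(a_n,\cdot)$, whereas the vague-convergence condition supplies convergence only for fixed continuous test functions. Decoupling this $n$-dependence is exactly what the $\nu_\infty$-a.e.\ continuity requirement in (H3) is built for; combined with the uniform boundedness of $\int|\chi|^2\,\d\nu_n$ from the third condition, this suffices to rule out escape of mass and conclude.
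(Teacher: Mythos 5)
Your forward direction (\eqref{equivPropCVGLT} $\Rightarrow$ \eqref{eqPropCVGLT}) is essentially the paper's argument, inlined. The centred rewrite, the near/far split at $B_\delta(a_\infty)$, the $o(|b-a_n|^2)$ control from {\rm(H3)} on the near field, and the use of the $\nu_\infty$-a.e.\ continuity on the far field are precisely the content of Lemma~\ref{lmCvMes}~ii), which the paper applies there. Your observation that the near-field moment bound $\int_{|b-a_n|\le\delta}|b-a_n|^2\nu_n(\d b)\lesssim\int|\chi(a_n,\cdot)|^2\d\nu_n$ follows from the third line of \eqref{equivPropCVGLT} (via $\gamma_n\ge 0$ and the trace) is legitimate, and is in fact the moment hypothesis needed to invoke that lemma, so this half is sound.

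The reverse direction (\eqref{eqPropCVGLT} $\Rightarrow$ \eqref{equivPropCVGLT}) has a genuine circularity. You propose to use the third condition (uniform boundedness of $\int|\chi(a_n,\cdot)|^2\d\nu_n$) both to extend the second condition from smooth compactly supported $f$ to all $f\in\rmC(\R^{d\Delta})$, and implicitly to control the near-field part of the integrands produced by the linear and quadratic test functions. But in this direction the third condition is precisely what you are trying to establish. With the raw $\chi$, if $f$ is linear near $a_\infty$ the integrand $f(b)-f(a_n)-\phi\cdot\chi(a_n,b)$ is only $O(|b-a_n|^2)$ near $a_n$, not identically zero, so extracting $\phi\cdot\delta_n$ from $T_{\chi,a_n}f$ already requires a uniform second-moment bound that you do not yet have. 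The paper breaks this loop by first replacing $\chi$ with a \emph{continuous} truncation $\widetilde{\chi}(a,b)=\theta(|b-a|)(b-a)$ (with $\theta$ a cutoff equal to $1$ near $0$) and modifying the drift to $\widetilde{\delta}_n$ accordingly. With $\widetilde{\chi}$ the linear and quadratic test-function integrands vanish \emph{exactly} in a neighbourhood of $a_n$ for $n$ large and are jointly continuous, so Lemma~\ref{lmCvMes}~i) (which only needs the already-proved far-field convergence to supply $\sup_n\nu_n(U_1)<\infty$) applies with no circularity. This yields $\widetilde{\delta}_n\to\widetilde{\delta}_\infty$ and the third condition for $\widetilde{\chi}$; the latter then provides the uniform second-moment bound, and Lemma~\ref{lmCvMes}~ii) is finally used to transfer the conclusions from $\widetilde{\chi}$ back to $\chi$. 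Also note that, even ignoring the circularity, the paper gets the tightness needed to pass to general $f\in\rmC(\R^{d\Delta})$ not from the third condition but directly from the operator convergence for smooth functions dominating an indicator of a neighbourhood of the far field; this is the cleaner route.

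So: your plan correctly identifies the test functions and the key structural ideas, but the missing step is the continuous-compensator intermediary. Without it, the extraction of the three conditions from \eqref{eqPropCVGLT} cannot be carried out in a non-circular order.
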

\begin{proof}[Proof of Theorem \ref{thmCVGLT}]
Parts 1) and 2) are direct consequences of the latter proposition. To verify Part 3) 
we remark that for any $f\in\rmC^\infty_c(\R^d)$, $n\in\N$ and $a\in\R^d$, we have \[(T_nf(a)-f(a))/\eps_n=T_{\chi,a}(\delta_n(a),0,\nu_n(a))f\] with  \[\delta_n(a):=\eps_n^{-1}\int_{\R^{d\Delta}\setminus\{a\}} \chi(a,b)\mu_n(a,\d b)
\quad\mbox{ and }\quad \nu_n(a,\d b):=\eps_n^{-1}\1_{\R^{d\Delta}\setminus\{a\}}(b)\mu_n(a,\d b),\] hence we can apply again Proposition \ref{propCVGLT}.
\end{proof}
In order, to prove Proposition \ref{propCVGLT} we need the following lemma on the convergence of measures:
\begin{lemma}\label{lmCvMes}
For $n\in\N\cup\{\infty\}$ let $a_n\in\R^d$ be such that $a_n\to a_\infty$ 
and let $\nu_n$ be Radon measures on $\R^{d\Delta}\setminus\{a_n\}$.
 Suppose that, for any $f\in\rmC(\R^{d\Delta})$ such that $f$ vanishes in a neighbourhood of $a_\infty$, is constant in a neighbourhood of $\Delta$ and is infinitely differentiable in $\R^d$, we have
\[\int f(b)\nu_n(\d b)\cv{n\to\infty}\int f(b)\nu_\infty(\d b).\]
i) Then, for any sequence $(f_n)_{n\in\N\cup\{\infty\}}$ of measurable uniformly bounded functions from $\R^{d\Delta}$ to $\R$ such that the $f_n$ vanish in the same neighbourhood of $a_\infty$, for $n\in\N\cup\{\infty\}$, and such that
\begin{align}\label{eqLmCvMes}
\nu_\infty\Big(\R^{d\Delta}\setminus\big\{b_0\in\R^{d\Delta}\,\big|\, \lim_{n\to\infty,b\to b_0}
f_n(b)
=
f_\infty(b_0)\big\}\Big)=0,
\end{align}
we have 
\[\int f_n(b)\nu_n(\d b)\cv{n\to\infty}\int f_\infty(b)\nu_\infty(\d b).\]
ii) Assume, moreover, that there exists $\eta>0$ such that \[\sup_{n\in\N\cup\{\infty\}}\int|b-a_n|^2\1_{|b-a_n|\leq\eta}\,\nu_n(\d b)<\infty.\]
Then, for any sequence $(f_n)_{n\in\N\cup\{\infty\}}$ of measurable uniformly bounded functions from $\R^{d\Delta}$ to $\R$ satisfying $f_n(a_n)=0$, 
\begin{equation}\label{eqo}
\lim_{\delta\to 0}\limsup_{n\to\infty}\sup_{0<|h|\leq\delta}\frac{f_n(a_n+h)}{|h|^2}=0,
\end{equation}
and \eqref{eqLmCvMes}, we have
\[\int f_n(b)\nu_n(\d b)\cv{n\to\infty}\int f_\infty(b)\nu_\infty(\d b).\]
\end{lemma}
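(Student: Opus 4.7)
For part (i) I will first normalize the geometry: since each $f_n$ (including $n=\infty$) vanishes in a common open neighborhood $V$ of $a_\infty$, all the integrals are supported on the compact set $K := \R^{d\Delta}\setminus V$. The hypothesis supplies $\int g\,\d\nu_n \to \int g\,\d\nu_\infty$ for every $g\in\rmC(\R^{d\Delta})$ which is constant near $\Delta$, smooth on $\R^d$ and vanishes near $a_\infty$. Since this class is uniformly dense in $\rmC(K)$ (Stone--Weierstrass on the compact $K$), the restrictions $\nu_n|_K$ converge weakly to $\nu_\infty|_K$ as finite measures on $K$; in particular $\sup_n\nu_n(K)<\infty$.

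The key step of part (i) is then to upgrade this weak convergence to $\int f_n\,\d\nu_n\to\int f_\infty\,\d\nu_\infty$ using only the $\nu_\infty$-a.e.\ continuous-convergence condition \eqref{eqLmCvMes}. Fixing $\eps>0$, for each $b_0$ in the $\nu_\infty$-full set where \eqref{eqLmCvMes} holds there exist $r_{b_0}>0$ and $N_{b_0}\in\N$ with $|f_n(b)-f_\infty(b_0)|<\eps$ for all $n\ge N_{b_0}$ and $b\in\bar B(b_0,r_{b_0})$. By compactness of $K$ and inner regularity of $\nu_\infty$, I extract a finite cover $B_1,\ldots,B_M$ of an open subset $W\subset K$ with $\nu_\infty(K\setminus W)<\eps$ and common threshold $N$. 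Shrinking radii, I arrange that the $B_k$ are disjoint $\nu_\infty$-continuity sets, and then by weak convergence also $\nu_n(K\setminus W)<2\eps$ for $n\ge N$. Replacing $f_n$ and $f_\infty$ on $W$ by the same step function $\sum_k f_\infty(b_{0,k})\1_{B_k}$ produces an error of order $\eps\sup_n\nu_n(K)$ in both integrals, while the step integrals themselves converge thanks to $\nu_n(B_k)\to\nu_\infty(B_k)$; letting $\eps\to 0$ closes this step.

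For part (ii) I introduce a cutoff $\phi_\delta\in\rmC(\R^{d\Delta})$ with $0\le\phi_\delta\le 1$, vanishing on $\bar B(a_\infty,\delta/2)$, equal to $1$ outside $B(a_\infty,\delta)$ and near $\Delta$. Writing $f_n = \phi_\delta f_n + (1-\phi_\delta)f_n$, the far piece $\phi_\delta f_n$ vanishes in a fixed neighborhood of $a_\infty$, so part (i) yields $\int\phi_\delta f_n\,\d\nu_n\to\int\phi_\delta f_\infty\,\d\nu_\infty$ for each fixed $\delta>0$. For the near piece, \eqref{eqo} (read for $|f_n|$, as is natural for integration) together with $f_n(a_n)=0$ and $a_n\to a_\infty$ gives, for $n$ large and $|b-a_\infty|\le\delta$, $|f_n(b)|\le\eps_\delta |b-a_n|^2$ with $\eps_\delta\to 0$; combined with the moment bound $M$ this yields $\limsup_n\bigl|\int(1-\phi_\delta)f_n\,\d\nu_n\bigr|\le CM\eps_\delta$. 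A matching bound for $\nu_\infty$ is obtained by pushing this estimate to the limit along the continuity set provided by \eqref{eqLmCvMes}: on a $\nu_\infty$-full subset of $\bar B(a_\infty,\delta/2)$ one has $|f_\infty(b)|\le C\eps_\delta|b-a_\infty|^2$, and since by hypothesis $\int|b-a_\infty|^2\1_{|b-a_\infty|\le\eta}\,\d\nu_\infty\le M$, one gets $|\int(1-\phi_\delta)f_\infty\,\d\nu_\infty|\le CM\eps_\delta$. Sending first $n\to\infty$ and then $\delta\to 0$ concludes.

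The main obstacle I foresee is the second step of part (i): passing from weak convergence of measures to convergence of integrals against a sequence of functions $f_n$ varying with $n$, with only the $\nu_\infty$-a.e.\ continuous-convergence guarantee \eqref{eqLmCvMes} rather than equicontinuity. Choosing the cover so that each $B_k$ is simultaneously a $\nu_\infty$-continuity set and a set on which $f_n$ is uniformly close to $f_\infty(b_{0,k})$ for all large $n$ is the delicate ingredient that makes the portmanteau-type estimate work.
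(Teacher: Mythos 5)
Your part (ii) is sound and mirrors the paper's argument: split off a neighbourhood of $a_\infty$ with a cutoff, apply part (i) to the far piece, and bound the near piece via the second-moment hypothesis together with \eqref{eqo}; the transfer of the bound to $f_\infty$ through \eqref{eqLmCvMes} is the same as in the paper. However, part (i) contains a genuine gap. You assert that because the test class is uniformly dense in $\rmC(K)$, the restrictions $\nu_n|_K$ converge weakly to $\nu_\infty|_K$ on the compact $K=\R^{d\Delta}\setminus V$. This does not follow: the hypothesis gives $\int g\,\d\nu_n\to\int g\,\d\nu_\infty$ over all of $\R^{d\Delta}$ for $g$ vanishing merely in \emph{some} neighbourhood of $a_\infty$, and such $g$ need not vanish on all of $V$; hence $\int g\,\d\nu_n$ is not $\int_K g\,\d\nu_n$, and density in $\rmC(K)$ buys you nothing about the restricted measures. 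Concretely, with $d=1$, $a_n=a_\infty=0$, $V=(-1,1)$, $\nu_n=\delta_{1-1/n}+\delta_{1+1/n}$ and $\nu_\infty=2\delta_1$, the hypothesis of the lemma holds, yet $\nu_n|_K=\delta_{1+1/n}\to\delta_1\neq 2\delta_1=\nu_\infty|_K$, so the total masses on $K$ do not even converge. The subsequent claim $\nu_n(B_k)\to\nu_\infty(B_k)$ for the chosen $B_k$ fails for the same reason (and you cannot rescue it by insisting that $B_k$ be a $\nu_\infty$-continuity set in $\R^{d\Delta}$, since in this example $\nu_\infty$ has an atom on $\partial K$, so any $B_k\subset K$ covering that atom necessarily has $\nu_\infty(\partial B_k)>0$). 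The underlying issue is that restricting a vaguely convergent sequence of Radon measures on the open set $\R^{d\Delta}\setminus\{a_\infty\}$ to a \emph{closed} $K$ is lossy: mass can leak across $\partial K$ in the limit.

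The paper avoids this by never invoking weak convergence of restrictions or convergence of $\nu_n(B)$ for a fixed Borel set $B$. Instead it chooses an \emph{open} $U_1\Subset\R^{d\Delta}\setminus\{a_\infty\}$ containing the union of the supports, establishes $\sup_n\nu_n(U_1)<\infty$ via a smooth $\varphi_1\ge\1_{U_1}$, and then uses inner regularity of $\nu_\infty$ to find a compact $K_\eps\subset U_1$ inside the continuity set of \eqref{eqLmCvMes} with $\nu_\infty(U_1\setminus K_\eps)<\eps$. On $K_\eps$ the $f_n$ converge uniformly to $f_\infty$, which lets one sandwich the integrals between two smooth test functions $\varphi_2,\varphi_3$ built from $K_\eps$ and an intermediate open $U_2$; all the error terms are then controlled by $\sup_n\nu_n(U_1)$, by $\|f_n\|$, and by the convergence of $\int\varphi_i\,\d\nu_n$, with nothing resembling $\nu_n(B)\to\nu_\infty(B)$. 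To repair your argument you would have to replace the fixed $K$ and the portmanteau/continuity-set mechanism by this open-set/inner-regularity mechanism; as written, the step from vague convergence against the test class to weak convergence of $\nu_n|_K$ is false.
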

\begin{proof}
Consider a sequence of functions $(f_n)_{n\in\N\cup\{\infty\}}$ as in the first part of 
lemma.
Let  $U_1$ be an open subset such that $U_1\Subset\R^{d\Delta}\setminus\{a_\infty\}$ and 
\[
U_1\supset\bigcup_{n\in\N\cup\{\infty\}}\{f_n\not=0\}
\supset\R^{d\Delta}\setminus\Big\{b_0\in\R^{d\Delta}\,\Big|\,
\lim_{n\to\infty, b\to b_0}f_n(b)
=f_\infty(b_0)\Big\}.
\]
Let $\varphi_1\in\rmC(\R^{d\Delta})$ be such that $\varphi_1$ vanishes in a neighbourhood of $a_\infty$ and is constant in a neighbourhood of $\Delta$, $\varphi_1$ is infinitely differentiable in $\R^d$ and such that $\varphi_1\geq\1_{U_1}$. Then 
\[\int \varphi_1(b)\nu_n(\d b)\cv{n\to\infty}\int \varphi_1(b)\nu_\infty(\d b),\]
so
\[
\sup_{n\in\N\cup\{\infty\}}\nu_n(U_1)\leq\sup_{n\in\N\cup\{\infty\}}\int \varphi_1(b)\nu_n(\d b)<\infty.
\]
Since $\R^{d\Delta}\setminus\{a_\infty\}$ is a Polish space, the measure $\nu_\infty$ is inner regular on this set. Hence, if $\eps>0$ is chosen arbitrary,  there exists a compact subset $K_\eps\subset U_1$ satisfying
\begin{equation}\label{eqKeps}
K_\eps\subset
\Big\{b_0\in\R^{d\Delta}\,\big|\,
\lim_{n\to\infty, b\to b_0}f_n(b)
=f_\infty(b_0)\Big\}
\quad\text{and}\quad\nu_\infty(K_\eps)\geq\nu_\infty(U_1)-\eps.
\end{equation}
Hence $f_\infty$ is continuous on $K_\eps$ and $f_n$ converges uniformly to $f_\infty$ on $K_\eps$. There exists a function $\varphi_2\in\rmC(\R^{d\Delta})$ such that  $\varphi_2$ is constant in a neighbourhood of $\Delta$,  is infinitely differentiable in $\R^d$ and such that $\{\varphi_2\not=0\}\subset U_1$, $\|\varphi_2\|\leq\|f_\infty\|$ and $\|\varphi_2-f_\infty\|_{K_\eps}\leq \eps$.
Since \eqref{eqKeps}, by compactness there exists an open subset $U_2\subset U_1$  such that
\[
K_\eps\subset U_2\subset\Big\{b_0\in\R^{d\Delta}\,\big|\,\limsup_{n\to\infty,b\to b_0}\big|f_n(b)-\varphi_2(b_0)\big|\leq 2\eps\Big\}.
\]
By dominated convergence there exists a function $\varphi_3\in\rmC(\R^{d\Delta})$ such that $\varphi_3$ vanishes in a neighbourhood of $a_\infty$,  is constant in a neighbourhood of $\Delta$,  is infinitely differentiable in $\R^d$, and such that $\1_{U_2}\geq\varphi_3$ and $\int \varphi_3(b)\nu_\infty(\d b)\geq\nu_\infty(U_2)-\eps$. Hence
\begin{align*}
\liminf_{n\to\infty}\nu_n(U_2)
&\geq\liminf_{n\to\infty}\int \varphi_3(b)\nu_n(\d b)=\int \varphi_3(b)\nu_\infty(\d b)\geq\nu_\infty(U_2)-\eps\\
&\geq\nu_\infty(K_\eps)-\eps\geq\nu_\infty(U_1)-2\eps.
\end{align*}
Therefore  we have
\begin{align*}
\limsup_{n\to\infty}\left|\int f_n(b)\nu_n(\d b)-\int f_\infty(b)\nu_\infty(\d b)\right|
&\leq \limsup_{n\to\infty}\left|\int \varphi_2(b)\nu_n(\d b)-\int \varphi_2(b)\nu_\infty(\d b)\right|\\
&\hspace{-6cm}\quad+\limsup_{n\to\infty}\left|\int_{U_2} (f_n(b)-\varphi_2(b))\nu_n(\d b)\right|
+\limsup_{n\to\infty}\left|\int_{U_1\setminus U_2} (f_n(b)-\varphi_2(b))\nu_n(\d b)\right|\\
&\hspace{-6cm}\quad+\limsup_{n\to\infty}\left|\int_{K_\eps} (f_\infty(b)-\varphi_2(b))\nu_\infty(\d b)\right|
+\limsup_{n\to\infty}\left|\int_{U_1\setminus K_\eps} (f_\infty(b)-\varphi_2(b))\nu_\infty(\d b)\right|\\
&\hspace{-6cm}\leq 0+2\eps\sup_{n\in\N}\nu_n(U_1)+4\eps\sup_{n\in\N\cup\{\infty\}}\|f_n\|+\eps\nu_\infty(U_1)+2\eps\|f_\infty\|\\
&\hspace{-6cm}\leq 3\eps(\sup_{n\in\N\cup\{\infty\}}\nu_n(U_1)+2\sup_{n\in\N\cup\{\infty\}}\|f_n\|).
\end{align*}
Letting $\eps\to 0$ we obtain that 
\[\int f_n(b)\nu_n(\d b)\cv{n\to\infty}\int f_\infty(b)\nu_\infty(\d b).\]

We proceed with the proof of the part {\sl ii)} of lemma. 
Fix $\eta>0$ as in the statement and choose an arbitrary $\eps>0$. By 
\eqref{eqo}, there exists $0<\delta<\eta/2$ such that 
\[
\limsup_{n\to\infty}\sup_{0<|h|\leq2\delta}\frac{f_n(a_n+h)}{|h|^2}
\leq
\frac{\eps}{\displaystyle1\vee\sup_{n\in\N\cup\{\infty\}}\int|b-a_n|^2\1_{|b-a_n|
\leq\eta}\nu_n(\d b)}.
\]
Consider a function $\varphi\in\rmC(\R^{d\Delta},[0,1])$ which vanishes in a neighbourhood of $a_\infty$ and such that $\varphi(a)=1$ for any $a$ satisfying $|a-a_\infty|\geq\delta$.  Then, by the first part {\sl i)},
\[\int \varphi(b)f_n(b)\nu_n(\d b)\cv{n\to\infty}\int \varphi(b)f_\infty(b)\nu_\infty(\d b).\] 
For $n\in\N$ large enough,  $|a-a_n|\leq\delta$, hence
\[
\left|\int(1-\varphi(b))f_n(b)\nu_n(\d b)\right|\leq \int|b-a_n|^2\1_{|b-a_n|\leq\eta}\nu_n(\d b)\sup_{0<|h|\leq2\delta}\frac{f_n(a_n+h)}{|h|^2}\,,
\]
so $\displaystyle\limsup_{n\to\infty}\left|\int(1-\varphi(b))f_n(b)\nu_n(\d b)\right|\leq\eps$. We also have
\begin{multline*}
\left|\int(1-\varphi(b))f_\infty(b)\nu_\infty(\d b)\right|\\\leq 
\int|b-a_\infty|^2\1_{|b-a_\infty|\leq\eta}\nu_n(\d b)
\limsup_{n\to\infty}\sup_{0<|h|\leq2\delta}\frac{f_n(a_n+h)}{|h|^2}
\leq \eps,
\end{multline*}
so 
\[\limsup_{n\to\infty}
\left|\int f_n(b)\nu_n(\d b)-\int f_\infty(b)\nu_\infty(\d b)\right|\leq2\eps.
\] 
Letting $\eps\to 0$ we can conclude.
\end{proof}
\begin{proof}[Proof of Proposition \ref{propCVGLT}]
Suppose first \eqref{eqPropCVGLT}.  
Let $f\in\rmC(\R^{d\Delta})$ be such that $f$ vanishes in a neighbourhood of $a_\infty$, is constant in a neighbourhood of $\Delta$ and is infinitely differentiable in $\R^d$. Hence $f-f(\Delta)\in\rmC^\infty_c(\R^d)$, and
 \[T_{\chi,a_\infty}(\delta_\infty,\gamma_\infty,\nu_\infty)(f-f(\Delta))=\int f(b)\nu_\infty(\d b),
\]
while, for $n$ large enough,
\[
 T_{\chi,a_n}(\delta_n,\gamma_n,\nu_n)(f-f(\Delta))= \int f(b)\nu_n(\d b).\]
We deduce that 
\[\int f(b)\nu_n(\d b)\cv{n\to\infty}\int f(b)\nu_\infty(\d b).\]
Therefore we can apply the first part of Lemma \ref{lmCvMes} and in particular, 
for any $f\in\rmC(\R^{d\Delta})$ vanishing in a neighbourhood of $a$,
\[\int f(b)\nu_n(\d b)\cv{n\to\infty}\int f(b)\nu_\infty(\d b).\] 

Consider $\theta\in\rmC(\R_+,[0,1])$ such that $\theta(r)=1$ for $r\leq 1$ and $\theta(r)=0$ for $r\geq 2$.  For $(a,b)\in \R^d\times\R^{d\Delta}$ and $n\in\N\cup\{\infty\}$, define 
\begin{equation}\label{eqPrPropCvgLT1}
\widetilde{\chi}(a,b):=\theta(|b-a|)(b-a)\1_{b\not =\Delta}\;\mbox{ and }
\;
\widetilde{\delta}_n:=\delta_n+\int(\widetilde{\chi}(a_n,b)-\chi(a_n,b))\nu_n(\d b).
\end{equation}
Therefore, for all $f\in\rmC^\infty_c(\R^d)$ and all $n\in\N\cup\{\infty\}$
\[
T_{\chi,a_n}(\delta_n,\gamma_n,\nu_n)f=T_{\;{\widetilde\chi},a_n}(\widetilde{\delta}_n,\gamma_n,\nu_n)f.
\]
Let $\phi$ be an arbitrary linear form on $\R^d$ and consider $f\in\rmC^\infty_c(\R^d)$ such that $f(b)=\phi\cdot(b-a_\infty)$ in a neighbourhood of $a_\infty$. Then
\[
T_{\;\widetilde{\chi},a_\infty}(\widetilde{\delta}_\infty,\gamma_\infty,\nu_\infty)f=\phi\cdot\widetilde{\delta}_\infty+\int(f(b)-\phi\cdot\widetilde{\chi}(a_\infty,b))\nu_\infty(\d b)
\]
and for $n$ large enough
\[
T_{\;\widetilde{\chi},a_n}(\widetilde{\delta}_n,\gamma_n,\nu_n)f=\phi\cdot\widetilde{\delta}_n+\int(f(b)-f(a_n)-\phi\cdot\widetilde{\chi}(a_n,b))\nu_n(\d b).
\]
Thanks to the first part of Lemma \ref{lmCvMes}
\[
\int(f(b)-f(a_n)-\phi\cdot\widetilde{\chi}(a_n,b))\nu_n(\d b)
\cv{n\to\infty}
\int(f(b)-\phi\cdot\widetilde{\chi}(a_\infty,b))\nu_\infty(\d b),
\]
so $\phi\cdot\widetilde{\delta}_n\cv{n\to\infty}\phi\cdot\widetilde{\delta}_\infty$ and since $\phi$ was chosen arbitrary, $\widetilde{\delta}_n\cv{n\to\infty}\widetilde{\delta}_\infty$.

Let $\Phi$ be an arbitrary symmetric bilinear form on $\R^d$ and if $(e_1,\ldots,e_d)$ is the canonical basis 
of $\R^d$, we denote $\Phi_{ij}=\Phi(e_i,e_j)$, $i,j=1,\ldots,d$. Consider $f\in\rmC^\infty_c(\R^d)$ such that $f(b)=\Phi(b-a_\infty,b-a_\infty)$ in a neighbourhood of $a_\infty$. Then, for $n$ large enough,
\begin{align*}
&T_{\widetilde{\chi},a_n}(\widetilde{\delta}_n,\gamma_n,\nu_n)f\\
&=\sum_{i,j=1}^d\Phi_{ij}\gamma_{n,ij}+2\Phi(a_n-a_\infty,\widetilde{\delta}_n)
+\int(f(b)-f(a_n)-2\Phi(a_n-a_\infty,\widetilde{\chi}(a_n,b))\nu_n(\d b)\\
&=\sum_{i,j=1}^d\Phi_{ij}\left(\gamma_{n,ij}+\int(\widetilde{\chi}_i\widetilde{\chi}_j)(a_n,b)\nu_n(\d b))\right)+2\Phi(a_n-a_\infty,\widetilde{\delta}_n)\\
&\quad+\int\Big(f(b)-f(a_n)-2\Phi(a_n-a_\infty,\widetilde{\chi}(a_n,b))-\sum_{i,j=1}^d\Phi_{ij}\,(\widetilde{\chi}_i\widetilde{\chi}_j)(a_n,b)\Big)\nu_n(\d b).
\end{align*}
A similar equality holds with the index $n$ replaced by $\infty$:
\begin{multline*}
T_{\widetilde{\chi},a_\infty}(\widetilde{\delta}_\infty,\gamma_\infty,\nu_\infty)f
=\sum_{i,j=1}^d\Phi_{ij}\gamma_{\infty,ij}
+\int f(b)\nu_\infty(\d b)\\
=\sum_{i,j=1}^d\Phi_{ij}\left(\gamma_{\infty,ij}
+\int(\widetilde{\chi}_i\widetilde{\chi}_j)(a_\infty,b)\nu_\infty(\d b))\right)
+\int\Big(f(b)
-\sum_{i,j=1}^d\Phi_{ij}\,(\widetilde{\chi}_i\widetilde{\chi}_j)(a_\infty,b)\Big)
\nu_\infty(\d b).
\end{multline*}
Thanks to the first part of Lemma \ref{lmCvMes}
\begin{multline*}
\int\Big(f(b)-f(a_n)-2\Phi(a_n-a_\infty,\widetilde{\chi}(a_n,b))-\sum_{i,j=1}^d\Phi_{ij}\,(\widetilde{\chi}_i\widetilde{\chi}_j)(a_n,b)\Big)\nu_n(\d b)\\
\cv{n\to\infty}
\int\Big(f(b)-\sum_{i,j=1}^d\Phi_{ij}\,(\widetilde{\chi}_i\widetilde{\chi}_j)(a_\infty,b)\Big)
\nu_\infty(\d b),
\end{multline*}
so
\begin{equation*}
\sum_{i,j=1}^d\Phi_{ij}
\Big(\gamma_{n,ij}+\int(\widetilde{\chi}_i\widetilde{\chi}_j)(a_n,b)
\nu_n(\d b))\Big)
\cv{n\to\infty}
\sum_{i,j=1}^d\Phi_{ij}
\Big(\gamma_{\infty,ij}+\int(\widetilde{\chi}_i\widetilde{\chi}_j)(a_\infty,b)
\nu_\infty(\d b))\Big).
\end{equation*}
Since $\Phi$ was chosen arbitrary, for all $1\leq i,j\leq d$
\[
\gamma_{n,ij}+
\int(\widetilde{\chi}_i\widetilde{\chi}_j)(a_n,b)\nu_n(\d b))
\cv{n\to\infty}
\gamma_{\infty,ij}+
\int(\widetilde{\chi}_i\widetilde{\chi}_j)(a_\infty,b)\nu_\infty(\d b)).
\]
So we can apply the second part of Lemma \ref{lmCvMes} and in particular
\[\lim_{n\to\infty}\int(\widetilde{\chi}(a_n,b)-\chi(a_n,b))\nu_n(\d b)
=\int(\widetilde{\chi}(a_\infty,b)-\chi(a_\infty,b))\nu_\infty(\d b),\]
so by \eqref{eqPrPropCvgLT1}, $\delta_n\cv{n\to\infty}\delta_\infty$.
By the second part of Lemma \ref{lmCvMes} we also have, for all $1\leq i,j\leq d$,
\begin{equation*}
\int((\widetilde{\chi}_i\widetilde{\chi}_j)(a_n,b)-(\chi_i\chi_j)(a_n,b))\nu_n(\d b)
\cv{n\to\infty}
\int((\widetilde{\chi}_i\widetilde{\chi}_j)(a_\infty,b)-(\chi_i\chi_j)(a_\infty,b))\nu_\infty(\d b),
\end{equation*}
so we deduce
\[
\gamma_{n,ij}+\int(\chi_i\chi_j)(a_n,b)\nu_n(\d b)
\cv{n\to\infty}
\gamma_{\infty,ij}+\int(\chi_i\chi_j)(a_\infty,b)\nu_\infty(\d b).
\]
We prove the converse, by supposing \eqref{equivPropCVGLT} and  applying the second part of Lemma \ref{lmCvMes}. Let $f\in\rmC^\infty_c(\R^d)$ be, for all $n\in\N\cup\{\infty\}$,
\begin{multline*}\begin{aligned}
T_{\chi,a_n}(\delta_n,\gamma_n,\nu_n)f
:=&\frac{1}{2}\sum_{i,j=1}^d\gamma_{n,ij}\partial^{2}_{ij}f(a_n)+\delta_n\cdot\nabla f(a_n)\\
&+\int(f(b)-f(a_n)-\chi(a_n,b)\cdot\nabla f(a_n))\nu_n(\d b)
\end{aligned}\\\begin{aligned}
=&\frac{1}{2}\sum_{i,j=1}^d\left(\gamma_{n,ij}+\int(\chi_i\chi_j)(a_n,b)\nu_n(\d b)\right)\partial^2_{ij}f(a_n)+\delta_n\cdot\nabla f(a_n)\\
&+\int\left(f(b)-f(a_n)-\chi(a_n,b)\cdot\nabla f(a_n)-\sum_{i,j=1}^d(\chi_i\chi_j)(a_n,b)\partial^{2}_{ij}f(a_n)\right)\nu_n(\d b).
\end{aligned}\end{multline*}
Applying the second part of Lemma \ref{lmCvMes} to the last term of the previous equation we deduce
\[
T_{\chi,a_n}(\delta_n,\gamma_n,\nu_n)f
\cv{n\to\infty}
T_{\chi,a_\infty}(\delta_\infty,\gamma_\infty,\nu_\infty)f.\qedhere
\]
\end{proof}

We finish the section with the proof of Theorem \ref{thmAppLT2}.
Recall that $\chi_1(a,b)$ is given by \eqref{explchi}. Thanks to Theorem 2.12 pp. 21-22 from \cite{Ho98}, for each $a\in\R^d$ there exists a triplet $(\delta(a),\gamma(a),\nu(a))$ satisfying (H2($a$)) such that, $Lf(a):=T_{\chi_1,a}(\delta(a),\gamma(a),\nu(a))f$, for all $f\in\rmC_c^\infty(\R^d)$. It is clear that for any $a_0,a\in\R^d$ and $f\in\rmC_c^\infty(\R^d)$, the Lévy operator $L(a_0)$ defined by \eqref{eqLevOp} satisfies also
\begin{align*}
L(a_0)f(a)=T_{\chi_1,a}(\delta(a_0),\gamma(a_0),\nu_a(a_0)).
\end{align*}
Here and elsewhere $\nu_a(a_0)$ is the pushforward measure of $\nu(a_0)$ with respect to 
the translation $b\mapsto b-a_0+a$.
\begin{proof}[Proof of Theorem \ref{thmAppLT2}]
It suffices to prove that for any function $f_0\in\rmC^\infty_c(\R^d)$ and any sequence $a_n\in\R^d$ converging to $a_\infty\in\R^d$, the sequence $(T_nf_0(a_n)-f_0(a_n))/\eps_n$ converges to $Lf_0(a_\infty)$.
Thanks to Proposition \ref{propCVGLT} we have,  $\delta(a_n)\cv{n\to\infty}\delta(a_\infty)$, 
\[\forall f\in\rmC(\R^{d\Delta})\mbox{ vanishing  in a neighbourhood of $a_\infty$,}\int f(b)\nu(a_n,\d b)\cv{n\to\infty}\int f(b)\nu(a_\infty,\d b),\] 
and for all $1\leq i,j\leq d$
\[
\gamma_{ij}(a_n)+\int(\chi_i\chi_j)(a_n,b)\nu(a_n,\d b)\cv{n\to\infty}\gamma_{ij}(a_\infty)+\int(\chi_i\chi_j)(a_\infty,b)\nu(a_\infty,\d b).
\]
It is not difficult to deduce that, there exists $C\in\R_+$ such that, for all $n\in\N\cup\{\infty\}$ and $f\in\rmC^\infty_c(\R^d)$,
\[
\|L(a_n)f\|\leq C\|f\|\vee\max_{1\leq i\leq d}\|\partial_if\|\vee\max_{1\leq i,j\leq d}\|\partial^{2}_{ij}f\|.
\]
Hence $\sup_{n\in\N\cup\{\infty\}}\|L(a_n)f_0\|<\infty$.
Consider $b_\infty\in\R^d$,  a sequence $b_n\to b_\infty$ and a function $f\in\rmC(\R^{d\Delta})$ vanishing in a neighbourhood of $b_\infty$. By using
the first part of Lemma \ref{lmCvMes},
\begin{multline*}
\int f(b)\nu_{b_n}(a_n,\d b)=\int f(b-a_n+b_n)\nu(a_n,\d b)\\
\cv{n\to\infty}\int f(b-a_\infty+b_\infty)\nu(a_\infty,\d b)=\int f(b)\nu_{b_\infty}(a_\infty,\d b).
\end{multline*}
Hence by the second part of Theorem \ref{thmCVGLT}, $L(a_n)f$ converges uniformly on compact sets toward $L(a_\infty)f$, for all $f\in\rmC^\infty_c(\R^d)$. In particular, for each $\eps >0$ there exists an open neighbourhood $U$ of $a_\infty$ and $n_0\in\N$ 
such that
\[
\forall n\geq n_0,~\forall a\in U,\quad|L(a_n)f_0(a)-L(a_\infty)f_0(a_\infty)|\leq \eps.
\]
Let $\Pbf_n$ be the unique element of $\Mc(L(a_n))$ such that $\Pbf_n(X_0=a_n)=1$. Then, for all $n\geq n_0$
\begin{align*}
&\left|\frac{T_nf_0(a_n)-f_0(a_n)}{\eps_n}-L(a_\infty)f_0(a_\infty)\right|
=\left|\frac{\Ebf_n[f_0(X_{\eps_n})]-f_0(a_n)}{\eps_n}-L(a_\infty)f_0(a_\infty)\right|\\
&\hspace{1.5cm}=\left|\Ebf_n\frac{1}{\eps_n}\int_0^{\eps_n}\big(L(a_n)f_0(X_s)-L(a_\infty)f_0(a_\infty)\big)\d s\right|\\
&\hspace{1.5cm}\leq\Ebf_n\left[\1_{\{\tau^U<\eps_n\}}\frac{1}{\eps_n}\int_0^{\eps_n}
\big|L(a_n)f_0(X_s)-L(a_\infty)f_0(a_\infty)\big|\d s\right]\\
&\hspace{1.5cm}\quad + \Ebf_n\left[\1_{\{\tau^U\geq\eps_n\}}\frac{1}{\eps_n}\int_0^{\eps_n}\big|L(a_n)f_0(X_s)-L(a_\infty)f_0(a_\infty)\big|\d s\right]\\
&\hspace{1.5cm}\leq 2\Pbf_n(\tau^U<\eps_n)\sup_{m\in\N\cup\{\infty\}}\|L(a_m)f_0\| + \eps.
\end{align*}
We apply Lemma \ref{lmUQC} of uniform continuity along stopping times with  a compact neighbourhood $\Kc\subset U$ of $a_\infty$ and with $\Uc:=\R^d\times U\cup(\R^d\setminus \Kc)\times\R^d$. We deduce that
\[
\lim_{n\to\infty}\Pbf_n(\tau^U<\eps_n)=0.
\]
Hence
\[
\limsup_{n\to\infty}\left|\frac{T_nf_0(a_n)-f_0(a_n)}{\eps_n}-L(a_\infty)f_0(a_\infty)\right|\leq\eps,
\]
and we conclude by letting $\eps\to 0$.
\end{proof}

\section{Diffusion in a potential}

We recall that $\rmL^1_\loc(\R)$ denotes the space of locally Lebesgue integrable functions. 
A real continuous function $f$ is called locally absolutely continuous if its
distributional derivative $f^\prime$ belongs to $\rmL^1_\loc(\R)$.
We introduce the set of potential functions 
\[\Vc:=\left\{V:\R\to\R\text{ measurable}\mid\rme^{|V|}\in\rmL_\loc^1(\R)\right\}.\]
It is straightforward to prove that there exists a unique Polish topology on $\Vc$ such that a sequence $(V_n)_n$ in $\Vc$ converges to $V\in\Vc$ if and only if
\[
\forall M\in\R_+,\quad\lim_{n\to\infty}\int_{-M}^M|\rme^{V(a)}-\rme^{V_n(a)}|\vee|\rme^{-V(a)}-\rme^{-V_n(a)}|\,\d a=0.
\]
For a potential $V\in\Vc$, the operator 
\begin{equation}\label{difpotoper}
L^V:=\frac{1}{2}\rme^V\frac{\d}{\d a}\rme^{-V}\frac{\d}{\d a}
\end{equation}
is the set of couples $(f,g)\in\rmC_0(\R)\times\rmC(\R)$ such that $f$ and  $\rme^{-V}f^\prime$ are locally absolutely continuous and $g=\frac{1}{2}\rme^V(\rme^{-V}f^\prime)^\prime$. Let us notice that it is a particular case of the operator $\Dbf_m\Dbf_p^+$ described in \cite{Ma68}, pp. 21-22. Heuristically, the solutions of the martingale local problem associated to $L^V$ are solutions of the stochastic differential 
 equation \[\d X_t=\d B_t-\frac{1}{2}V^\prime(X_t)\d t,\]
 where $B$ is a standard Brownian motion.
\begin{proposition}[Diffusions on potential and random walks on $\Z$]\label{propDiffPot}~
\begin{enumerate}
\item\label{itPropDiffPot1} For any potential $V\in\Vc$, the operator $L^V$ is the generator of a locally Feller family.
\item\label{itPropDiffPot2} For any sequence of potentials $(V_n)_n$ in $\Vc$ converging to $V\in\Vc$ for the topology of $\Vc$, the sequence of operators $L^{V_n}$ converges to $L^V$, in the sense of 
the third statement 
of the convergence Theorem \ref{thmCvgLocFel}.
\item\label{itPropDiffPot3} For $(n,k)\in\N\times\Z$, let $q_{n,k}\in\R$ and $\eps_n>0$ be.
For all $n\in\N$, in accordance with Definition \ref{defDTSCMF}, let $(\Pbf^n_k)_k\in\Pcal(\Z^\N)^\Z$ be the unique discrete time locally Feller family such that
\[\Pbf^n_k(Y_1=k+1)=1-\Pbf^n_k(Y_1=k-1)=\frac{1}{\rme^{q_{n,k}}+1}.\]
Denote the sequence of potential in $\Vc$ by
\begin{align*}
V_n(a):=\sum_{k=1}^{\lfloor a/\eps_n\rfloor}q_{n,k}\1_{a\geq\eps_n} - \sum_{k=0}^{-\lfloor a/\eps_n\rfloor-1}q_{n,-k}\1_{a<0}\,.
\end{align*}
Let   $V$ be a potential in $\Vc$ and let $(\Pbf_a)_a$ be the locally Feller family associated with $L^V$.
Assume that $V_n$ converges to $V$ for the topology of $\Vc$, and $\eps_n\to 0$ as $n\to\infty$.
Then, for any sequence $\mu_n\in\Pcal(\Z)$ such that their pushforwards with respect to the mappings $k\mapsto\eps_nk$ converge to a probability measure $\mu\in\Pcal(\R)$, we have
\[
\law_{\Pbf^n_{\mu_n}}\left((\eps_nY_{\lfloor t/\eps_n^{2}\rfloor})_t\right)\cv[\Pcal\left(\Dloc(S)\right)]{n\to\infty}\Pbf_\mu.
\]
\end{enumerate}
\end{proposition}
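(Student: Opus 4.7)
I will prove the three parts in order, reducing the random-walk statement (Part \ref{itPropDiffPot3}) to the continuous-dependence result (Part \ref{itPropDiffPot2}) via Theorem \ref{thmCvgLocFel2}.

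\emph{Part \ref{itPropDiffPot1}.} The positive maximum principle for $L^V$ is elementary: if $f\in\rmD(L^V)$ attains a nonnegative maximum at $a_0$, one first checks that $(e^{-V}f')(a_0)=0$ (otherwise the continuous function $e^{-V}f'$ would have constant sign on one side of $a_0$, making $f$ strictly monotone there); then the identity $(e^{-V}f')(a)=2\int_{a_0}^{a}e^{-V(b)}g(b)\,db$ together with continuity of $g$ shows that $g(a_0)>0$ would make $f$ strictly increase on $(a_0,a_0+\eta)$, contradicting the maximum, so $g(a_0)\leq 0$. Density of $\rmD(L^V)$ in $\rmC_0(\R)$ holds since $\rmC^\infty_c(\R)\subset\rmD(L^V)$, so Theorem \ref{thmExMP} yields existence. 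Well-posedness follows from classical one-dimensional generalized-diffusion theory (cf.\ \cite{Ma68}): under $e^{|V|}\in\rmL^1_\loc$, the scale measure $dp=e^V\,da$ and the speed measure $dm=2e^{-V}\,da$ are both strictly positive and locally finite, and in these coordinates $L^V=\tfrac{1}{2}\tfrac{d}{dm}\tfrac{d}{dp}$, so the resolvent equation $(\lambda-L^V)f=\tilde g$ is a Sturm--Liouville problem with a unique solution in $\rmC_0(\R)$, giving uniqueness of the martingale local problem.

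\emph{Part \ref{itPropDiffPot2}.} I verify condition (c) of Theorem \ref{thmCvgLocFel}. Given $f\in\rmD(L^V)$, set $\tilde g:=f-L^V f\in\rmC_0(\R)$ and define $f_n:=(1-L^{V_n})^{-1}\tilde g\in\rmD(L^{V_n})$, so that $L^{V_n}f_n=f_n-\tilde g$; it remains to prove $f_n\to f$ in $\rmC_0(\R)$. The resolvent $(1-L^{V_n})^{-1}$ admits an explicit Green-kernel representation in terms of two linearly independent solutions $\phi_n^{\pm}$ of $(1-L^{V_n})u=0$ decaying at $\pm\infty$, obtained by alternating integrations of $e^{V_n}$ and $e^{-V_n}$. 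The defining property of the $\Vc$-topology is precisely that $e^{\pm V_n}\to e^{\pm V}$ in $\rmL^1_\loc(\R)$, which propagates to locally uniform convergence $\phi_n^{\pm}\to\phi^{\pm}$, hence to $f_n\to f$ locally uniformly; a uniform tail estimate at $\pm\infty$ (again a consequence of the $\Vc$-topology) upgrades this to convergence in $\rmC_0(\R)$.

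\emph{Part \ref{itPropDiffPot3}.} I apply Theorem \ref{thmCvgLocFel2} with time step $\eps_n^2$ to the rescaled walks $\tilde Y^n_m:=\eps_n Y_m$, viewed as a discrete-time locally Feller family on $\eps_n\Z\subset\R$ and extended to arbitrary starting points $a\in\R$ by translating the walk to $a+\eps_n\Z$; the transition operator is $T_n f(a)=p_n(a)f(a+\eps_n)+(1-p_n(a))f(a-\eps_n)$ with $p_n(\eps_n k)=1/(1+e^{q_{n,k}})$. Rather than expanding $T_n$ by Taylor's formula (which produces a term $-\tfrac{1}{2}V'f'$ ill-defined for $V\in\Vc$), I will construct the approximants by solving the \emph{discrete} resolvent equation $f_n-(T_n f_n-f_n)/\eps_n^2=\tilde g$ on $\eps_n\Z$ via the discrete harmonic functions of the walk, which are affine in the natural scale $s_n(k):=\sum_{j=0}^{k-1}e^{V_n(\eps_n j)}$, and then interpolating piecewise-affinely in the scale to obtain $f_n\in\rmC_0(\R)$. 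Since $\eps_n s_n(\lfloor\cdot/\eps_n\rfloor)$ is a Riemann approximation to the continuous scale $p$ of $L^V$, the $\Vc$-convergence $V_n\to V$ combined with the Green-kernel argument of Part \ref{itPropDiffPot2} will give $f_n\to f=(1-L^V)^{-1}\tilde g$ in $\rmC_0(\R)$, whence condition (c) of Theorem \ref{thmCvgLocFel2} follows. The main obstacle is exactly this quantitative comparison between the discrete resolvent on $\eps_n\Z$, driven by the step potential $V_n$, and the continuous resolvent of $L^V$ under the weak regularity $e^{|V|}\in\rmL^1_\loc$: the $\Vc$-topology controls $e^{\pm V}$ in $\rmL^1_\loc$, which is enough to pass the scale sums to the limit uniformly on compacts, but handling the tails at $\pm\infty$ and merging this with the estimates of Part \ref{itPropDiffPot2} uniformly in $n$ is where the bulk of the technical work will lie.
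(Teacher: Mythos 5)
Your outline correctly identifies the landmarks (positive maximum principle plus Theorem~\ref{thmExMP} for existence, condition~(c) of Theorems~\ref{thmCvgLocFel}/\ref{thmCvgLocFel2} for the convergences), but there are genuine gaps in each part, and the route you take diverges from the paper's in ways that matter.

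In Part~\ref{itPropDiffPot1}, your verification of the positive maximum principle and of density is correct and matches the paper's use of the integral identity \eqref{eqPropDiffPot4}. However, your uniqueness claim, namely that ``the resolvent equation is a Sturm--Liouville problem with a unique solution in $\rmC_0(\R)$, giving uniqueness of the martingale local problem'', is only asserted. For a generic $V\in\Vc$ the process need not be conservative (consider $V(a)=-a^4$), and the implication from uniqueness of the $\rmC_0$-resolvent to uniqueness of the martingale \emph{local} problem must be established inside the locally Feller framework, which your sketch does not do. The paper does not go through a resolvent at all: it first slows down time by an auxiliary function $h$ chosen via \eqref{eqPropDiffPot2} so that the Mandl operator $D_mD_p^+$ with $\d m=2e^{-V}/h\,\d a$, $\d p=e^V\,\d a$ has \emph{natural} boundaries at $\pm\infty$, hence generates a conservative Feller semigroup; it then passes through \eqref{eqGenFF}, \eqref{notationhL}, and the closedness $L^V=\overline{L^V}$ to identify $L^V$ as the generator of a locally Feller family. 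This time change is the key technical idea missing from your argument.

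In Part~\ref{itPropDiffPot2}, the Green-kernel argument requires controlling the resolvents of $L^{V_n}$ globally and uniformly in $n$, which is exactly what the potential lack of conservativity makes delicate. The ``uniform tail estimate at $\pm\infty$'' you invoke is not a consequence of the $\Vc$-topology, since that topology only constrains $e^{\pm V_n}$ on compact sets and gives no information near $\pm\infty$. The paper sidesteps the issue by localizing: it uses Lemma~\ref{lmMPtoSMP} to build cutoffs $h_M$, introduces the extended operators $\widetilde{L_{n,M}}$, and directly \emph{constructs} $f_n\in\rmC_0(\R)$ by iterated integration of $L^Vf$ against $e^{\pm V_n}$ on $(-2M,2M)$ so that $\widetilde{L_{n,M}}f_n=h_ML^Vf$ exactly; the convergence $\|f_n-f\|\to0$ is then elementary from \eqref{eqPropDiffPot3}, and the localized convergences are glued over $M$ by Lemmas~\ref{lmEqGentoEqPro} and~\ref{lemLocCont}. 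No resolvent inversion and no tail estimate are needed.

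Part~\ref{itPropDiffPot3} you leave explicitly as a plan (``where the bulk of the technical work will lie''), so it is incomplete. Furthermore, extending the walk off the lattice by translation with fixed step $\pm\eps_n$ does not produce a discrete time locally Feller family on $\R$ whose transition operator is continuous and simultaneously reduces to the prescribed walk at the lattice points; the paper instead builds the step sizes $\psi_{1,n}(a)$, $\psi_{2,n}(a)$ and jump probability $p_n(a)$ from $V_n$ via \eqref{eqPropDiffPot8}--\eqref{eqPropDiffPot10} precisely so that the resulting transition operator $T_n$ is continuous on $\R$ and agrees with the random walk on $\eps_n\Z$. With $T_n$ in hand, \eqref{eqPropDiffPot4} gives the pointwise bound $\bigl|(T_nf_n-f_n)/\eps_n^2 - L^{V_n}f_n\bigr|\leq\sup_{|h|\leq2\eps_n}|L^{V_n}f_n(\cdot+h)-L^{V_n}f_n(\cdot)|$, and the conclusion follows from Part~\ref{itPropDiffPot2} and Theorem~\ref{thmCvgLocFel2} with no discrete Green function required.
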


Before proving this proposition, we give an important consequence 
concerning a random walk and a diffusion in random environment.  
For $n\in\N$, let $(\Omega^n,\Gc^n,\P^n)$ be a probability space and consider the random variables
\[
(q_{n,k})_k:\Omega^n\to\R^\Z,\quad (Z^n_k)_k:\Omega^n\to\Z^\N\quad\text{ and }\quad\eps_n:\Omega^n\to\R_+^*\,.
\]
Suppose that for any $n\in\N$ and $k\in\N$, $\P^n$-almost surely,
\begin{align*}
&\P^n\left(Z^n_{k+1}=Z^n_k+1\mid \eps_n,(q_{n,\ell})_{\ell\in\Z},(Z^n_\ell)_{0\leq\ell\leq k}\right)
=\frac{1}{\rme^{q_{n,Z_k}}+1}\\
&\P^n\left(Z^n_{k+1}=Z^n_k-1\mid \eps_n,(q_{n,\ell})_{\ell\in\Z},(Z^n_\ell)_{0\leq\ell\leq k}\right)
=\frac{1}{\rme^{-q_{n,Z_k}}+1}=1-\frac{1}{\rme^{q_{n,Z_k}}+1}.
\end{align*}
For any $n\in\N$ and $a\in\R$, denote the random potential in $\Vc$ by
\begin{equation}\label{Wn}
W_n(a):=\sum_{k=1}^{\lfloor a/\eps_n\rfloor}q_{n,k}\1_{a\geq\eps_n} - \sum_{k=0}^{-\lfloor a/\eps_n\rfloor-1}q_{n,-k}\1_{a<0}\,.
\end{equation}
Let $(\Omega,\Gc,\P)$ be a probability space and consider random variables
\[
W:\Omega\to\Vc\quad \text{ and }\quad Z:\Omega\to\Dloc(\R).
\]
Suppose that the conditional distribution of $Z$ with respect to $W$ satisfies, $\P$-almost surely,
\[
\law_\P\left(Z\mid W\right)\in\Mc(L^W).
\]
\begin{proposition}\label{propRWREtoDRE}
Suppose that $\eps_n$ converges in distribution to $0$, that $\eps_nZ^n_0$ converges in distribution to $Z_0$ and that $W_n$ converges in distribution to $W$ for the topology of $\Vc$. Then $(\eps_nZ^n_{\lfloor t/\eps_n^2\rfloor})_t$ converges in distribution to $Z$ for the local Skorokhod topology.
\end{proposition}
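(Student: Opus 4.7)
The plan is to reduce the claim to the deterministic-environment result of Proposition~\ref{propDiffPot}(\ref{itPropDiffPot3}) by conditioning on the random potential $W_n$ and then integrating out.

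First I would unpack the conditional structure of the random walk in random environment. Conditionally on $(\eps_n, (q_{n,k})_k, Z^n_0)$---equivalently, on $(\eps_n, W_n, \eps_n Z^n_0)$, since $(\eps_n, W_n)$ recovers $(q_{n,k})_k$ by differencing and $\eps_n Z^n_0 \in \eps_n\Z$ recovers $Z^n_0$---the chain $(Z^n_k)_k$ is precisely the discrete-time locally Feller chain from Proposition~\ref{propDiffPot}(\ref{itPropDiffPot3}) with deterministic environment $W_n$. Consequently there is a measurable map $G_n\colon \Vc \times \R_+^* \times \R \to \Pcal(\Dloc(\R))$ such that the conditional law of $(\eps_n Z^n_{\lfloor t/\eps_n^2 \rfloor})_t$ given $(W_n, \eps_n, \eps_n Z^n_0)$ equals $G_n(W_n, \eps_n, \eps_n Z^n_0)$. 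Proposition~\ref{propDiffPot}(\ref{itPropDiffPot3}) can then be rephrased as: whenever deterministic sequences satisfy $V_n \to V$ in $\Vc$, $\eps_n \to 0$, and $a_n \to a$ in $\R$, one has $G_n(V_n, \eps_n, a_n) \to \Pbf^V_a$ weakly on $\Dloc(\R)$, where $\Pbf^V$ denotes the locally Feller family associated with $L^V$.

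Next I would build a Skorokhod coupling. Because $\eps_n \to 0$ in probability, Slutsky's lemma together with the hypothesis gives joint convergence $(W_n, \eps_n, \eps_n Z^n_0) \to (W, 0, Z_0)$ in distribution on the Polish space $\Vc \times \R_+ \times \R$. Skorokhod's representation theorem then yields, on some common probability space $(\widetilde{\Omega}, \widetilde{\Gc}, \widetilde{\P})$, a sequence $(\widetilde{W}_n, \widetilde{\eps}_n, \widetilde{a}_n)$ with the joint law of $(W_n, \eps_n, \eps_n Z^n_0)$ converging $\widetilde{\P}$-almost surely to $(\widetilde{W}, 0, \widetilde{Z}_0)$ having the joint law of $(W, 0, Z_0)$; note that $\widetilde{a}_n \in \widetilde{\eps}_n \Z$ a.s., so $\widetilde{a}_n/\widetilde{\eps}_n$ is a well-defined integer initial position on the coupled space.

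The conclusion then follows by combining the pointwise deterministic statement with bounded convergence. Almost surely on $\widetilde{\Omega}$, the deterministic convergence applies outcome by outcome to give $G_n(\widetilde{W}_n, \widetilde{\eps}_n, \widetilde{a}_n) \to \Pbf^{\widetilde{W}}_{\widetilde{Z}_0}$ weakly in $\Pcal(\Dloc(\R))$. For each $f \in \rmC_b(\Dloc(\R))$ the random variable $\int f\,\d G_n(\widetilde{W}_n, \widetilde{\eps}_n, \widetilde{a}_n)$ is bounded by $\|f\|_\infty$ and converges $\widetilde{\P}$-a.s.\ to $\int f\,\d \Pbf^{\widetilde{W}}_{\widetilde{Z}_0}$, so bounded convergence upgrades this to convergence of expectations. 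The left-hand expectation equals $\E^n[f((\eps_n Z^n_{\lfloor t/\eps_n^2 \rfloor})_t)]$ by the conditioning of the first step; the right-hand expectation equals $\E[f(Z)]$, because well-posedness of the martingale local problem for $L^W$ (Proposition~\ref{propDiffPot}(\ref{itPropDiffPot1})) together with the hypothesis $\law_\P(Z\mid W) \in \Mc(L^W)$ forces $\law_\P(Z\mid W, Z_0) = \Pbf^W_{Z_0}$ $\P$-a.s., so $\law(Z) = \E[\Pbf^W_{Z_0}] = \widetilde{\E}[\Pbf^{\widetilde{W}}_{\widetilde{Z}_0}]$. The only real technical obstacle is the measurability of $G_n$ in its three arguments (so that the conditional distributions and the Skorokhod coupling are well defined) and the legitimacy of the outcome-wise application of Proposition~\ref{propDiffPot}(\ref{itPropDiffPot3}) on the coupled space; the rest is a routine transfer of the deterministic statement to the random environment via the standard ``almost-sure representation plus bounded convergence'' technique.
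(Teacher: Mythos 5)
Your proof is correct and reduces to Proposition~\ref{propDiffPot}\ref{itPropDiffPot3} by conditioning on the environment, which is exactly the paper's strategy. The one genuine difference is in the transfer step: the paper avoids Skorokhod representation altogether by defining a \emph{single scalar} map $G\colon\R\times\Vc\times\R_+\to\R$ (for a fixed bounded continuous test function $F$) that is already extended to $\eps=0$ via the continuous-time family, proving that $G$ is continuous at every point of $\R\times\Vc\times\{0\}$, and then invoking the continuous mapping theorem directly to pass from convergence of $(\eps_nZ^n_0,W_n,\eps_n)$ to convergence of the expectations. Your approach---build a measure-valued kernel $G_n$, pass to an a.s.\ Skorokhod coupling, apply the deterministic result outcome by outcome, and finish with bounded convergence---is essentially a hand-proof of that same continuous-mapping step, so it buys you nothing new but is no less rigorous; the paper's choice of a scalar $G$ with an explicit value at $\eps=0$ is cleaner, in particular because it makes the measurability you flag as the ``only real obstacle'' essentially automatic. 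One shared point to be careful about: both arguments really need \emph{joint} convergence in distribution of the triple $(\eps_nZ^n_0,W_n,\eps_n)$, and Slutsky only upgrades convergence when one component goes to a constant; it gives $(W_n,\eps_n)\to(W,0)$ and $(\eps_nZ^n_0,\eps_n)\to(Z_0,0)$, but the joint convergence of $(\eps_nZ^n_0,W_n)$ to $(Z_0,W)$ is an implicit assumption in the statement (the paper makes the same tacit use), so this is not a defect of your proof relative to the paper's.
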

\begin{example}
1) Let $(q_k)_k$ be an i.i.d sequence of centred real random variables with finite variance $\sigma^2$ and suppose that $q_{n,k}=\sqrt{\eps_n}q_k$. Suppose also that $W$ is a  Brownian motion with variance $\sigma^2$. Then, by Donsker's theorem, $W_n$ 
given by \eqref{Wn} converges in distribution to $W$, so we can apply Proposition \ref{propRWREtoDRE} to deduce the convergence of the random walk in a random i.i.d. medium (introduced by Sinai 
in \cite{Si82}) to the diffusion in a Brownian potential (introduced in \cite{Br86}). Hence we recover Theorem 1 from \cite{Se00}, p. 295, without the hypothesis that the distribution of $q_0$ is compactly supported.

2) Fix deterministic $q\in\R$ and $\lambda\in\R_+^*$. Suppose that for each $n\in\N$, $(q_{n,k})_k$ is an i.i.d sequence of  random variables such that $\P^n(q_{n,k} = q)=1-\P^n(q_{n,k}=0)=\lambda\eps_n$. Suppose also that $W(a)=qN_{\lambda a}$, where $N$ is a standard Poisson process on $\R$. Then, it is classical (see for instance \cite{Ca97}), that $W_n$ given by \eqref{Wn} converges in distribution to $W$, so we can apply Proposition \ref{propRWREtoDRE} to deduce the convergence of Sinai's random walk to the diffusion in a Poisson potential. Hence we 
recover Theorem 2 from \cite{Se00}, p. 296.

3) More generally, suppose that for each $n\in\N$, $(q_{n,k})_k$ is an i.i.d sequence of
random variables. Likewise, suppose that $W_n$ given again by \eqref{Wn}, converges in distribution to a Lévy process $W$. We can apply Proposition \ref{propRWREtoDRE} to deduce the convergence of Sinai's random walk to the diffusion in a Lévy potential (introduced in \cite{Ca97}).
\end{example}
\begin{proof}[Proof of Proposition \ref{propRWREtoDRE}]
Let $F$ be a bounded continuous function from $\Dloc(\R)$ to $\R$. Define the bounded mapping
\[
G:\R\times\Vc\times\R_+\to\R
\]
as follows: for any $a\in\R$, $V\in\Vc$ and $\eps\in\R_+^*$, let $\Pbf^{a,V,\eps}\in\Pcal(\Z^\N)$ be the unique 
probability such that $\Pbf^{a,V,\eps}(Y_0=\lfloor a/\eps\rfloor)=1$ and, $\Pbf^{a,V,\eps}$-almost surely, for all $k\in\N$,
\begin{align*}
\Pbf^{a,V,\eps}\left(Y_{k+1}=Y_k+1\mid Y_0,\ldots, Y_k\right)&=1-\Pbf^{a,V,\eps}\left(Y_{k+1}=Y_k-1\mid Y_0,\ldots, Y_k\right)\\
&=\left.\int_{\eps Y_k-\eps}^{\eps Y_k}\rme^{V(a)}\d a\,\right/\,\int_{\eps Y_k-\eps}^{\eps Y_k+\eps}\rme^{V(a)}\d a.
\end{align*}
Let $\Pbf^{a,V,0}\in\Pcal(\Dloc(\R))$ be the unique element belonging to $\Mc(L^V)$ 
and starting from $a$. We set
\begin{align*}
G(a,V,\eps):=\Ebf^{a,V,\eps}\left[F\left((\eps Y_{\lfloor t/\eps^{2}\rfloor})_t\right)\right]
\quad\text{ and }\quad
G(a,V,0):=\Ebf^{a,V,0}\left[F(X)\right].
\end{align*}
By Proposition \ref{propDiffPot}, the mapping $G$ is continuous at every point of $\R\times\Vc\times\{0\}$. Thus,
\[
\E^n\left[G(\eps_nZ^n_0,W_n,\eps_n)\right]\cv{n\to\infty}\E\left[G(Z_0,W,0)\right].
\]
Hence
\begin{multline*}
\E^n\left[F\left((\eps_nZ^n_{\lfloor t/\eps_n^2\rfloor})_t\right)\right]
= \E^n\left[\E^n\left[F\left((\eps_nZ^n_{\lfloor t/\eps_n^2\rfloor})_t\right)\mid\eps_n,~Z^n_0,~(q_{n,\ell})_{\ell\in\Z}\right]\right]\\
=\E^n\left[G(\eps_nZ^n_0,W_n,\eps_n)\right]
\cv{n\to\infty}\E\left[G(Z_0,W,0)\right]
= \E\left[\E\left[F(Z)\mid Z_0,~W\right]\right]
= \E\left[F(Z)\right]\,.
\end{multline*}
Then $(\eps_nZ^n_{\lfloor t/\eps_n^2\rfloor})_t$ converges in distribution to $Z$.
\end{proof}

Before starting the proof of Proposition \ref{propDiffPot}, let us give a preliminary computation. Let $a_1,a_2\in\R$ be and let  $V:[a_1\wedge a_2,a_1\vee a_2]\to\R$ be a measurable function such that $\rme^{|V|}\in\rmL^1([a_1\wedge a_2,a_1\vee a_2])$. For any absolutely continuous function $f\in\rmC([a_1\wedge a_2,a_1\vee a_2],\R)$ such that $\rme^{-V}f^\prime$ is absolutely continuous and $g:=\frac{1}{2}\rme^V(\rme^{-V}f^\prime)^\prime$ is continuous, 
we have
\begin{align}
\nonumber f(a_2)
&=f(a_1)+\int_{a_1}^{a_2}f^\prime(b)\d b
=f(a_1)+\int_{a_1}^{a_2}\rme^{V(b)}\left((\rme^{-V}f^\prime)(a_1)+\int_{a_1}^b(\rme^{-V}f^\prime)^\prime(c)\d c\right)\d b\\
\label{eqPropDiffPot3}&=f(a_1)+\int_{a_1}^{a_2}\rme^{V(b)}\left((\rme^{-V}f^\prime)(a_1)+2\int_{a_1}^b\rme^{-V(c)}g(c)\d c\right)\d b\\
\label{eqPropDiffPot4}&\begin{aligned}
&=f(a_1)+(\rme^{-V}f^\prime)(a_1)\int_{a_1}^{a_2}\rme^{V(b)}\d b+2g(a_1)\int_{a_1}^{a_2}\int_{a_1}^b\rme^{V(b)-V(c)}\d c\,\d b\\
&\quad+2\int_{a_1}^{a_2}\int_{a_1}^b\rme^{V(b)-V(c)}(g(c)-g(a_1))\d c\,\d b.
\end{aligned}
\end{align}\begin{proof}[Proof of Proposition \ref{propDiffPot}]
\emph{Proof of \ref{itPropDiffPot1}.}
This part is essentially an application of the second chapter of \cite{Ma68}. 
For the sake of completeness we give here some details.
Let $h\in\rmC(\R,\R_+^*)$ be such that, for all $n\in\N$,
\begin{equation}\label{eqPropDiffPot2}\begin{aligned}
\inf_{n\leq|a|\leq n+1}h(a)\leq\frac{1}{n}&\Big[\,\int_n^{n+1}\int_0^a\rme^{V(b)-V(a)}\d b\,\d a\bigwedge\int_{n+1}^{n+2}\int_n^{n+1}\rme^{V(a)-V(b)}\d b\,\d a\\
&\bigwedge\int_{-n-1}^{-n}\int_a^0\rme^{V(b)-V(a)}\d b\,\d a\bigwedge\int_{-n-2}^{-n-1}\int_{-n-1}^{-n}\rme^{V(a)-V(b)}\d b\,\d a\Big]
\end{aligned}\end{equation}
The operator $hL^V$ coincides on $\rmC_0(\R)\times\rmC_0(\R)$ with the operator $D_mD_p^+\subset\rmC(\overline\R)\times\rmC(\overline\R)$ on the extended real line $\overline\R$, described  in \cite{Ma68}, pp. 21-22, where
\[
\d m(a):=\frac{2\rme^{-V(a)}}{h(a)}\d a\quad\text{ and }\quad\d p(a):=\rme^{V(a)}\d a.
\]
Applying \eqref{eqPropDiffPot2} we have 
\begin{align*}
&\int_0^\infty\int_0^a\d m(b)\d p(a)\geq\limsup_{n\to\infty}\int_{n+1}^{n+2}\int_{n}^{n+1}\d m(b)\d p(a)\geq\limsup_{n\to\infty}2n=\infty,\\
&\int_0^\infty\int_0^a\d p(b)\d m(a)\geq\limsup_{n\to\infty}\int_{n}^{n+1}\int_{0}^{a}\d p(b)\d m(a)\geq\limsup_{n\to\infty}2n=\infty,\\
&\int_{-\infty}^0\int_a^0\d m(b)\d p(a)\geq\limsup_{n\to\infty}\int_{-n-2}^{-n-1}\int_{-n-1}^{-n}\d m(b)\d p(a)\geq\limsup_{n\to\infty}2n=\infty,\\
&\int_{-\infty}^0\int_a^0\d p(b)\d m(a)\geq\limsup_{n\to\infty}\int_{-n-1}^{-n}\int_{a}^{0}\d p(b)\d m(a)\geq\limsup_{n\to\infty}2n=\infty.
\end{align*}
Thus according to the definition given in \cite{Ma68}, pp. 24-25, the boundary points $-\infty$ and $+\infty$ are natural. Thanks to Theorem 1 and Remark 2 p. 38 of \cite{Ma68}, $D_mD_p^+$ is the generator of a conservative Feller semi-group on $\rmC(\overline\R)$. Furthermore by steps 7 and 8 in \cite{Ma68},  pp. 31-32, 
\[D_mD_p^+f(-\infty)=D_mD_p^+f(+\infty)=0,\quad\forall f\in\rmD(D_mD_p^+),\] 
so that the operator 
\[D_mD_p^+\cap\rmC_0(\R)\times\rmC_0(\R)
=(hL^V)\cap\rmC_0(\R)\times\rmC_0(\R)\]
is the $\rmC_0\times\rmC_0$-generator of a Feller semi-group.
Hence, by \eqref{eqGenFF} and  \eqref{notationhL} we deduce that the operator
\[
\widetilde{L}:=\frac{1}{h}\overline{(hL^V)\cap\rmC_0(\R)\times\rmC_0(\R)}
\]
is the generator of a locally Feller family. Here  the closure is taken in $\rmC_0(\R)\times\rmC(\R)$, and it is clear that $\widetilde{L}\subset \overline{L^V}$. 
Thanks to  \eqref{eqPropDiffPot3} it is straightforward to obtain $L^V=\overline{L^V}$ and thanks to \eqref{eqPropDiffPot4} it is straightforward to obtain that $L^V$ satisfies the positive maximum principle. Thanks to Theorem \ref{thmExMP} we deduce the existence for the martingale local problem associated to $L^V$. Hence 
$L^V=\widetilde{L}$  is the generator of a locally Feller family.\\

\noindent
\emph{Proof of \ref{itPropDiffPot2}.}
Denote by $(\Pbf^n_a)_a$ and $(\Pbf^\infty_a)_a$ the locally Feller families associated,
respectively, to $L^{V_n}$ and  $L^V$.  By Theorem \ref{thmCvgLocFel} it is enough to prove that for each sequence $a_n\in\R$ converging to $a_\infty\in\R$, $\Pbf^n_{a_n}$ converges weakly to $\Pbf^\infty_{a_\infty}$ for the local Skorokhod topology.
Thanks to  Lemma \ref{lmMPtoSMP}, for  $M\in\N^*$, there exists $h_M\in\rmC(\R,[0,1])$ such that 
\[\{h_M\not = 0\}=(-2M,2M),\quad \{h_M=1\}=[-M,M]\] and, for all $n\in\N$, the martingale local problems associated to $h_ML^V$ and to $h_ML^{V_n}$ are well-posed.
For $n\in\N$ and $M\in\N^*$, denote by $(\Pbf^{n,M}_a)_a$ and $(\Pbf^{\infty,M}_a)_a$ the locally Feller families associated, respectively with $h_ML^{V_n}$ and $h_ML^V$.
For $n\in\N$, define the extension of $h_ML^{V_n}$:
\[
\widetilde{L_{n,M}}:=\Big\{(f,g)\in\rmC_0(\R)\times\rmC(\R)\,\big|\,
g=\frac{1}{2}h_M\rme^{V_n}(\rme^{-V_n}f^\prime)^\prime\mathds{1}_{(-2M,2M)}\Big\},
\]
where $f$ and $\rme^{-V_n}f^\prime$ are supposed locally absolutely continuous only on 
$(-2M,2M)$.
By \eqref{eqPropDiffPot4} it is straightforward to obtain that $\widetilde{L_{n,M}}$ satisfies the positive maximum principle, so by Theorem \ref{thmExMP}, $\widetilde{L_{n,M}}$ is a linear subspace of the generator of the family $(\Pbf^{n,M}_a)_a$.
We will prove that the sequence of operators $\widetilde{L_{n,M}}$ converges to the operator $h_ML^V$ in the sense of the third statement of  Theorem \ref{thmCvgLocFel}. Let $f\in\rmD(L)$ be and  define $f_n\in\rmC_0(\R)$ by
\begin{equation*}
f_n(a):=\left\{\begin{array}{ll}
f(a),\quad a\notin(-2M-n^{-1},2M+n^{-1})\\\\[-0.1cm]
\displaystyle f(0)+\int_0^a\rme^{V_n(b)}\Big[(\rme^{-V}f^\prime)(0)+2\int_0^b\rme^{-V_n(c)}L^Vf(c)\d c\Big]\d b,\quad a\in[-2M,2M],\end{array}\right.
\end{equation*}
with $f_n$ affine on $[-2M-n^{-1},-2M]$ and on $[2M,2M+n^{-1}]$.
Hence $f_n\in\rmD(\widetilde{L_{n,M}})$ and $\widetilde{L_{n,M}}f_n=h_ML^Vf$. We have
\[
\|f_n-f\|\leq\sup_{a\in[-2M,2M]}|f_n(a)-f(a)|+\sup_{\substack{2M\leq |a_1|,|a_2|\leq 2M+n^{-1}\\0\leq a_1a_2}}|f(a_2)-f(a_1)|.
\]
Since $f$ is continuous, the second supremum in the latter equation tends to $0$.
It is straightforward to deduce from \eqref{eqPropDiffPot3}, by using  the expression of $f_n$ and  the convergence $V_n\to V$, that 
\[\sup_{a\in[-2M,2M]}|f_n(a)-f(a)|\cv{n\to\infty}0.\]
Hence $\|f_n-f\|\to 0$ as $n\to\infty$, so the by Theorem \ref{thmCvgLocFel}:
\begin{align}\label{eqPropDiffPot6}
\Pbf^{n,M}_{a_n}\cv{n\to\infty}\Pbf^{\infty,M}_{a_\infty}.
\end{align}
By using Lemma \ref{lmEqGentoEqPro}, for all $M\in\N^*$ and $n\in\N\cup\{\infty\}$,
\begin{equation}\label{eqPropDiffPot7}
\law_{\Pbf^{n,M}_{a_n}}\left(X^{\tau^{(-M,M)}}\right) =\law_{\Pbf^n_{a_n}}\left(X^{\tau^{(-M,M)}}\right).
\end{equation}
At this level we use a result of localisation of the continuity contained in Lemma \ref{lemLocCont}.
Therefore, from \eqref{eqPropDiffPot6} and \eqref{eqPropDiffPot7}, letting $M\to\infty$, we deduce
\begin{align*}
\Pbf^n_{a_n}\cv{n\to\infty}\Pbf^\infty_{a_\infty}.
\end{align*}
\emph{Proof of \ref{itPropDiffPot3}.}
For $n\in\N$, define the continuous function $\varphi_n:\R\times\R\to\R_+$ given by 
\[
\varphi_n(a,h):=2\int_{a}^{a+h}\int_{a}^b\rme^{V_n(b)-V_n(c)}\d c\,\d b.
\]
For each $a\in\R$, it is clear that $\varphi_n(a,\cdot)$ is strictly increasing on $\R_+$ and  $\varphi_n(a,0)=0$. Furthermore, since 
$V_n$ is constant on the interval $\big[\eps_n\lceil a/\eps_n\rceil,\eps_n(\lceil a/\eps_n\rceil+1)\big)$,
\[
\varphi_n(a,2\eps_n)\geq2\int_{\eps_n\lceil a/\eps_n\rceil}^{\eps_n(\lceil a/\eps_n\rceil+1)}\int_{\eps_n\lceil a/\eps_n\rceil}^b\rme^{V_n(b)-V_n(c)}\d c\,\d b= \eps_n^2.
\]
Hence, there exists a unique $\psi_{1,n}(a)\in(0,2\eps_n]$ such that
\begin{align}\label{eqPropDiffPot8}
\varphi_n(a,\psi_{1,n}(a))=\eps_n^2.
\end{align}
Using the continuity of $\varphi_n$ and the compactness of $[0,2\eps_n]$, it is straightforward to obtain that $\psi_{1,n}$ is continuous. In the same manner, 
we may prove that, for each $a\in\R$, there exists a unique $\psi_{2,n}(a)\in(0,2\eps_n]$ such that
\begin{align}\label{eqPropDiffPot9}
\varphi_n(a,-\psi_{2,n}(a))=\eps_n^2,
\end{align}
and that $\psi_{2,n}$ is continuous. Introduce the continuous function $p_n:\R\to(0,1)$ given by
\begin{align}\label{eqPropDiffPot10}
p_n(a):=\left.\int_{a-\psi_{2,n}(a)}^{a}\rme^{V_n(b)}\d b\,\right/\int_{a-\psi_{2,n}(a)}^{a+\psi_{1,n}(a)}\rme^{V_n(b)}\d b,
\end{align}
and define a transition operator $T_n:\rmC_0(\R)\to\rmC_0(\R)$ by
\[
T_nf(a):= p_n(a)f(a+\psi_{1,n}(a))+(1-p_n(a))f(a-\psi_{2,n}(a)).
\]
According to Definition \ref{defDTSCMF}, let $(\widetilde{\Pbf}_a^n)_a\in\Pcal\left(\R^\N\right)^\R$ be the discrete time locally Feller family with transition operator $T_n$. 
For any $k\in\Z$, since $V_n$ is constant on $[\eps_nk,\eps_n(k+1))$ and on $[\eps_n(k-1),\eps_nk)$, we have
\begin{align*}
&\varphi_n(\eps_nk,\eps_n)=2\int_{\eps_nk}^{\eps_n(k+1)}\int_{\eps_nk}^b\d c\,\d b= \eps_n^2,\\
&\varphi_n(\eps_nk,-\eps_n)=2\int_{\eps_nk}^{\eps_n(k-1)}\int_{\eps_nk}^b\d c\,\d b= \eps_n^2,
\end{align*}
and therefore $\psi_{1,n}(\eps_nk)=\psi_{2,n}(\eps_nk)=\eps_n$. Furthermore
\[
p_n(\eps_nk):=\frac{\int_{\eps_n(k-1)}^{\eps_nk}\rme^{V_n(b)}\d b}{\int_{\eps_n(k-1)}^{\eps_n(k+1)}\rme^{V_n(b)}\d b}
=\frac{\eps_n\rme^{V_n(\eps_n(k-1))}}{\eps_n\rme^{V_n(\eps_n(k-1))}+\eps_n\rme^{V_n(\eps_nk)}}
=\frac{1}{1+\rme^{q_{n,k}}},
\]
hence for any $f\in\rmC_0(\R)$,
\[
T_nf(\eps_nk):= \frac{1}{1+\rme^{q_{n,k}}}f(\eps_n(k+1))+\frac{1}{1+\rme^{-q_{n,k}}}f(\eps_n(k-1)).
\]
We deduce that for any $\mu\in\Pcal(\Z)$ and $n\in\N$, $\law_{\Pbf^n_\mu}(\eps_nY)=\widetilde{\Pbf}_{\widetilde{\mu}}^n$\,, where $\widetilde{\mu}$ is the  pushforward measure of $\mu$ with respect to the mapping $k\mapsto\eps_nk$. 

We shall use Theorem \ref{thmCvgLocFel2} of convergence of discrete time Markov families.
If $f\in\rmD(L^V)$, we need to prove that there exists a sequence of continuous functions $f_n\in \rmC_0(\R)$ converging to $f$ such that $(T_nf_n-f_n)/\eps_n^2$ converges to $L^Vf$. By the second  part of Proposition \ref{propDiffPot}, there exists a sequence  of continuous functions $f_n\in\rmD(L^{V_n})$ such that $f_n$ converges to $f$ and $L^{V_n}f_n$ converges to $L^Vf$.
Applying \eqref{eqPropDiffPot4} to $f_n$ and $V_n$ and recalling \eqref{eqPropDiffPot8} and \eqref{eqPropDiffPot9}, we have for all $a\in\R$ and $n\in\N$,
\begin{align*}
f(a+\psi_{1,n}(a)) = &f(a)+ (\rme^{-V}f^\prime)(a)\int_{a}^{a+\psi_{1,n}(a)}\rme^{V(b)}\d b+\eps_n^2L^{V_n}f_n(a)\\
&+2\int_{a}^{a+\psi_{1,n}(a)}\int_{a}^b\rme^{V(b)-V(c)}(L^{V_n}f_n(c)-L^{V_n}f_n(a))\d c\,\d b,
\end{align*}
\begin{align*}
f(a-\psi_{2,n}(a)) = &f(a)- (\rme^{-V}f^\prime)(a)\int_{a-\psi_{2,n}(a)}^{a}\rme^{V(b)}\d b+\eps_n^2L^{V_n}f_n(a)\\
&+2\int_{a}^{a-\psi_{2,n}(a)}\int_{a}^b\rme^{V(b)-V(c)}(L^{V_n}f_n(c)-L^{V_n}f_n(a))\d c\,\d b.
\end{align*}
Hence by \eqref{eqPropDiffPot10}, for all $a\in\R$ and $n\in\N$,
\begin{align*}
&\Big|\frac{T_nf_n(a)-f_n(a)}{\eps_n^2}-L^{V_n}f_n(a)\Big|\\
&\hspace{2.5cm}\leq \frac{2p_n(a)}{\eps_n^2}\Big|\int_{a}^{a+\psi_{1,n}(a)}\int_{a}^b\rme^{V(b)-V(c)}(L^{V_n}f_n(c)-L^{V_n}f_n(a))\d c\,\d b\Big|\\
&\hspace{2.5cm}\quad + \frac{2(1-p_n(a))}{\eps_n^2}\Big|\int_{a}^{a-\psi_{2,n}(a)}\int_{a}^b\rme^{V(b)-V(c)}(L^{V_n}f_n(c)-L^{V_n}f_n(a))\d c\,\d b\Big|\\
&\hspace{2.5cm}\leq \sup_{|h|\leq2\eps_n}|L^{V_n}f_n(a+h)-L^{V_n}f_n(a)|.
\end{align*}
It is not difficult to deduce that $(T_nf_n-f_n)/\eps_n^2$ converges to $L^Vf$.
Finally, by the Theorem \ref{thmCvgLocFel2} of convergence of discrete time Markov families, for any sequence $\mu_n\in\Pcal(\Z)$ such that $\widetilde{\mu}_n$ converges to a probability measure $\mu\in\Pcal(\R)$, we have
\[
\law_{\Pbf^n_{\mu_n}}\left((\eps_nY_{\lfloor t/\eps_n^2\rfloor})_t\right)=\law_{\widetilde{\Pbf}_{\widetilde{\mu}_n}^n}\left((Y_{\lfloor t/\eps_n^2\rfloor})_t\right)\cv[\Pcal\left(\Dloc(S)\right)]{n\to\infty}\Pbf_\mu,
\]
where $\widetilde{\mu}_n$ are the pushforwards of $\mu_n$ with respect to the mappings $k\mapsto\eps_nk$.
\end{proof}

\appendix
\section*{Appendix}
\stepcounter{section}

We collect in this appendix several results already proved in \cite{GH117} having somehow
technical statements and used in the previous sections. We refer the interested reader to 
the paper \cite{GH117} for the introductory contexts and proofs  of each lemma. 

\begin{lemma}[cf. Lemma 3.6 in \cite{GH117}]\label{lmUQC}
Let $L_n,L_\infty\subset\rmC_0(S)\times\rmC(S)$ be such that $\rmD(L_\infty)$ is dense in $\rmC_0(S)$ and assume that, for any $f\in \rmD(L_\infty)$, there exist, for each $n$,  $f_n\in \rmD(L_n)$ such that $f_n\cv[\rmC_0]{n\to\infty} f$, $L_nf_n\cv[\rmC]{n\to\infty}L_\infty f$.
Consider $\Kc$ a compact subset of $S$ and $\Uc$ an open subset of $S\times S$ containing $\{(a,a)\,|\,a\in S\}$. For an arbitrary $(\Fc_{t+})_t$-stopping time $\tau_1$ we denote the $(\Fc_{t+})_t$-stopping time
\[
\tau(\tau_1):=\inf\left\{t\geq \tau_1\mid\{(X_{\tau_1},X_s)\}_{\tau_1\leq s\leq t}\not\Subset \Uc \right\}.
\]
Then for each $\eps>0$ there exist $n_0\in\N$ and $\delta>0$ such that: for any $n\geq n_0$, for any $\tau_1\leq\tau_2$, $(\Fc_{t+})_t$-stopping times, and for any $\Pbf\in\Mc(L_n)$ satisfying $\Ebf[(\tau_2-\tau_1)\mathds{1}_{\{X_{\tau_1}\in \Kc\}}]\leq\delta$, we have
\[
\Pbf(X_{\tau_1}\in \Kc,~\tau(\tau_1)\leq \tau_2)\leq\eps,
\]
with the convention $X_\infty:=\Delta$.
\end{lemma}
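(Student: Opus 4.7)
The plan is to choose finitely many test functions in $\rmD(L_\infty)$ that separate $\Kc$ from the points whose pairing with $\Kc$ leaves $\Uc$, transfer them to $\rmD(L_n)$ via the hypothesis, and exploit the martingale identity to convert the control $\Ebf[(\tau_2-\tau_1)\mathds{1}_{\{X_{\tau_1}\in\Kc\}}]\leq\delta$ into the desired probability estimate.

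First, since $\Kc$ is compact and $\Uc$ is an open neighbourhood of the diagonal in $S\times S$, a tube-lemma argument will produce open sets $V_1,\ldots,V_m\Subset W_1,\ldots,W_m\Subset S$ with $\Kc\subset\bigcup_i V_i$ and $\overline{W_i}\times\overline{W_i}\subset\Uc$. For a small parameter $\eta>0$ (to be fixed at the end) the density of $\rmD(L_\infty)$ in $\rmC_0(S)$ will allow me to pick $f^i\in\rmD(L_\infty)$ with $0\leq f^i\leq 1$, $f^i\geq 1-\eta$ on $\overline{V_i}$ and $f^i\leq\eta$ on $S\setminus W_i$. Applying the hypothesis to each $f^i$ produces $f^i_n\in\rmD(L_n)$ with $f^i_n\to f^i$ in $\rmC_0(S)$ and $L_nf^i_n\to L_\infty f^i$ in $\rmC(S)$; hence for $n\geq n_0$ one has $\|f^i_n-f^i\|_\infty\leq\eta$ for every $i$, and $M_0:=\sup_{n\geq n_0,\,i}\|L_nf^i_n\|_{\overline{W_i}}$ is finite.

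The geometric point is that on $\{X_{\tau_1}\in V_i\}$, since $\overline{W_i}\times\overline{W_i}\subset\Uc$, any escape of $(X_{\tau_1},X_s)$ from $\Uc$ can only occur after $X$ has left $W_i$; hence $\tau(\tau_1)\geq\tau^{W_i}(\tau_1):=\inf\{t\geq\tau_1\mid X_t\notin W_i\text{ or }X_{t-}\notin W_i\}$. I will then apply Doob's optional sampling to the $\Pbf$-martingale associated with $(f^i_n,L_nf^i_n)\in L_n$ between $\tau_1$ and $\tau_2\wedge\tau^{W_i}(\tau_1)$ (after truncating the stopping times by a large constant and taking a limit by dominated convergence) to obtain
\[
\bigl|\Ebf\bigl[\mathds{1}_{\{X_{\tau_1}\in V_i\cap\Kc\}}(f^i_n(X_{\tau_2\wedge\tau^{W_i}(\tau_1)})-f^i_n(X_{\tau_1}))\bigr]\bigr|\leq M_0\,\Ebf[(\tau_2-\tau_1)\mathds{1}_{\{X_{\tau_1}\in\Kc\}}]\leq M_0\delta.
\]
On $\{X_{\tau_1}\in V_i\cap\Kc,\tau^{W_i}(\tau_1)\leq\tau_2\}$ the bound $f^i_n(X_{\tau_1})\geq 1-2\eta$ and the cadlag-exit convention $f^i_n(X_{\tau^{W_i}(\tau_1)})\leq 2\eta$ make the integrand $\leq-(1-4\eta)$. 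Combining with the martingale bound above, controlling the integrand on the complementary event $\{\tau^{W_i}(\tau_1)>\tau_2\}$, summing over $i$, and choosing first $\eta$ then $\delta$ small enough should yield $\Pbf(X_{\tau_1}\in\Kc,\tau(\tau_1)\leq\tau_2)\leq\eps$.

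The main obstacle I expect will be the control of the non-exit contribution on $\{\tau^{W_i}(\tau_1)>\tau_2\}$: there $X$ stays in $W_i$ but may visit the transition band $W_i\setminus V_i$, where $f^i_n$ can a priori fluctuate. The resolution is to take the transition band thin and to iterate the same martingale estimate, the probability of $X$ entering this thin band within $[\tau_1,\tau_2]$ being itself controlled by an analogous bound; this bootstrap in the choice of nested open sets then closes the argument.
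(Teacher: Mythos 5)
Your proposal has the right overall architecture — finite cover of $\Kc$ by relatively compact $V_i\Subset W_i$ with $\overline{W_i}\times\overline{W_i}\subset\Uc$, approximate bump functions transferred from $\rmD(L_\infty)$ to $\rmD(L_n)$, and a martingale estimate to convert the time bound $\Ebf[(\tau_2-\tau_1)\1_{\{X_{\tau_1}\in\Kc\}}]\leq\delta$ into the probability bound — but two steps as written are not justified, and the difficulty you actually flag at the end is not where the real obstacle lies. (The paper does not reproduce the proof of this lemma, which is taken from \cite{GH117}, so the comparison below is to what a correct argument must contain.)

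First, the definition of $\Mc(L_n)$ only provides the martingale property of $t\mapsto f_n^i(X_{t\wedge\tau^{W_i}})-\int_0^{t\wedge\tau^{W_i}}L_nf_n^i(X_s)\,\d s$ with $\tau^{W_i}$ the exit time \emph{from time $0$}. Applying optional sampling between $\tau_1$ and $\tau_2\wedge\tau^{W_i}(\tau_1)$ does not directly yield the identity you claim: on the event $\{\tau^{W_i}<\tau_1\leq\tau_2\}$ (the process left $W_i$ before $\tau_1$ and came back into $V_i$ at time $\tau_1$) the stopped martingale is constant, so the optional sampling identity degenerates to $0=0$ and says nothing about the interval $[\tau_1,\tau^{W_i}(\tau_1)]$. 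What you need is a ``restarted'' version of the martingale property, namely
\[
\Ebf\Bigl[\,f_n^i\bigl(X_{\sigma_2\wedge\tau^{W_i}(\sigma_1)}\bigr)-f_n^i(X_{\sigma_1})-\int_{\sigma_1}^{\sigma_2\wedge\tau^{W_i}(\sigma_1)}L_nf_n^i(X_s)\,\d s \mid \Fc_{\sigma_1}\Bigr]=0,
\]
for all bounded stopping times $\sigma_1\leq\sigma_2$. This does hold for solutions of the martingale local problem, but it is not an immediate consequence of the definition; it must be derived, e.g.\ by taking a sequence $V_m\Subset V_{m+1}$ exhausting $S$, applying optional sampling to the martingales stopped at $\tau^{V_m}$ between $\sigma_1$ and $\sigma_2\wedge\tau^{W_i}(\sigma_1)$, and passing to the limit $m\to\infty$ by dominated convergence (using that the $\d s$-integral is supported where $X_s\in W_i$, so the integrand is uniformly bounded). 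Your write-up skips this entirely.

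Second, the ``cadlag-exit convention $f_n^i(X_{\tau^{W_i}(\tau_1)})\leq 2\eta$'' is false in general. The stopping time $\tau^{W_i}(\tau_1)$ is triggered when $X_t\notin W_i$ \emph{or} $X_{t-}\notin W_i$; if only the left limit leaves $W_i$ (the path approaches $\partial W_i$ and then jumps back inside $W_i$), one has $X_{\tau^{W_i}(\tau_1)}\in W_i$ and $f_n^i$ there may be close to $1$, not close to $0$. The clean repair is to interpose an open set $W_i^-$ with $V_i\Subset W_i^-\Subset W_i$, arrange $f^i\leq\eta$ on $S\setminus W_i^-$ (rather than on $S\setminus W_i$), and use the pure hitting time $\rho_i:=\inf\{t\geq\tau_1: X_t\notin W_i^-\}$. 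One checks $\rho_i\leq\tau^{W_i}(\tau_1)$ (so $\rho_i$ precedes the stopping of the localized martingale) and, by right continuity and openness of $W_i^-$, that $X_{\rho_i}\notin W_i^-$, so $f_n^i(X_{\rho_i})\leq 2\eta$ holds unconditionally.

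Finally, the ``main obstacle'' you anticipate — the contribution on $\{\rho_i>\tau_2\}$ — does not require any bootstrap. On that event $f_n^i(X_{\tau_2})-f_n^i(X_{\tau_1})\leq(1+2\eta)-(1-2\eta)=4\eta$, and the martingale identity then gives
$-(1-4\eta)\,\Pbf(X_{\tau_1}\in V_i\cap\Kc,\;\rho_i\leq\tau_2)+4\eta\geq -M_0\delta$, so the probability is at most $(M_0\delta+4\eta)/(1-4\eta)$. Summing over $i$ and choosing first $\eta$ then $\delta$ small gives the bound $\eps$. No iteration of the martingale estimate or thinning of transition bands is needed. In short: the two places that require real work (the restart of the martingale at $\tau_1$, and the correct choice of stopping time so that the exit value of $f_n^i$ is controlled) are glossed over, while effort is directed at a step that is in fact elementary.
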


\begin{lemma}[cf. Proposition 4.15 in \cite{GH117}]\label{lmEqGentoEqPro}
Let $L_1,L_2\subset \rmC_0(S)\times\rmC(S)$ be such that $\rmD(L_1)=\rmD(L_2)$ is dense in $\rmC_0(S)$ and assume that the martingale local problems associated to $L_1$
and $L_2$ are well-posed. Let $\Pbf^1\in\Mc(L_1)$ and $\Pbf^2\in\Mc(L_2)$ be  two solutions of these problems having the same initial distribution and let  $U\subset S$ be an 
open subset. If 
\[
\forall f\in D(L_1),\;(L_2f)_{|U}=(L_1f)_{|U}\,,
\quad\mbox{ then }\quad
\law_{\Pbf^2}\left(X^{\tau^U}\right) = \law_{\Pbf^1}\left(X^{\tau^U}\right).
\]
\end{lemma}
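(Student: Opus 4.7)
The strategy is to construct an auxiliary probability $\widetilde{\Pbf}$ on $\Dloc(S)$ which coincides with $\Pbf^2$ up to the exit time $\tau^U$ and which belongs to $\Mc(L_1)$ with initial distribution $\mu := \law_{\Pbf^1}(X_0) = \law_{\Pbf^2}(X_0)$. The well-posedness of $L_1$'s martingale local problem will then force $\widetilde{\Pbf} = \Pbf^1$, and equality of the stopped laws will follow from the construction.

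To build $\widetilde{\Pbf}$, I would first invoke the well-posedness of $L_1$'s problem, which yields a locally Feller family $(\Pbf^1_a)_{a \in S^\Delta}$ with $a \mapsto \Pbf^1_a$ suitably measurable. Then I would splice $\Pbf^2$ and this family at $\tau^U$: writing $\omega \oplus_\tau \omega'$ for the path that equals $\omega$ on $[0,\tau(\omega)]$ and is extended by the shift $\omega'$ afterwards, I set
\[
\widetilde{\Pbf}(A) := \int \Pbf^2(\d\omega) \int \1_A\bigl(\omega \oplus_{\tau^U(\omega)} \omega'\bigr)\,\Pbf^1_{\omega(\tau^U(\omega))}(\d\omega').
\]
By construction $\widetilde{\Pbf}$ has initial distribution $\mu$ and $\law_{\widetilde{\Pbf}}(X^{\tau^U}) = \law_{\Pbf^2}(X^{\tau^U})$.

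Next I would verify $\widetilde{\Pbf} \in \Mc(L_1)$. Fix $(f,g) \in L_1$ and an open $V \Subset S$, and set $M_t := f(X_{t\wedge\tau^V}) - \int_0^{t\wedge\tau^V} g(X_s)\,\d s$. On the piece up to $\tau^U$, the hypothesis $(L_1 f)_{|U} = (L_2 f)_{|U}$, together with the fact that $X_s, X_{s-} \in U$ for $s < \tau^U$, lets me transfer the $\Pbf^2$-martingale property of $\Mc(L_2)$ to the stopped process $M_{\cdot\wedge\tau^U}$ under $\widetilde{\Pbf}$. For the remaining piece, the concatenation together with $\Pbf^1_{X_{\tau^U}} \in \Mc(L_1)$ provides, by conditioning at $\tau^U$, the martingale increments of $M$ beyond $\tau^U$. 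Splicing these two parts gives that $M$ is a bona fide $\widetilde{\Pbf}$-martingale, so $\widetilde{\Pbf} \in \Mc(L_1)$.

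Finally, well-posedness of $L_1$'s problem forces $\widetilde{\Pbf} = \Pbf^1$, and hence $\law_{\Pbf^1}(X^{\tau^U}) = \law_{\widetilde{\Pbf}}(X^{\tau^U}) = \law_{\Pbf^2}(X^{\tau^U})$. I expect the main technical obstacle to be the careful handling of the martingale property at the junction $\tau^U$: verifying that the two pieces splice into a genuine martingale requires the strong Markov-type property of the locally Feller family $(\Pbf^1_a)$ and a measurable dependence of $\Pbf^1_a$ on $a$, both available in the framework of \cite{GH117}. A secondary subtlety is the case where $\tau^U = \xi$ so that $X_{\tau^U} = \Delta$, which is handled by taking $\Pbf^1_\Delta$ to be the measure concentrated on the constant path equal to $\Delta$.
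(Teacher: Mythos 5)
Your proof is correct and follows the standard Stroock--Varadhan splicing argument for localization of martingale problems: paste $\Pbf^2$ restricted to $[0,\tau^U]$ with the well-posed family $(\Pbf^1_a)_a$ restarted at $X_{\tau^U}$, observe that on $[0,\tau^U]$ the stopping time $\tau^U\wedge\tau^V=\tau^{U\cap V}$ forces the path to stay in $U$ where $L_1f=L_2f$, and invoke uniqueness for $L_1$ to identify the spliced measure with $\Pbf^1$. The present paper does not prove this lemma but defers it to Proposition 4.15 of \cite{GH117}, so a line-by-line comparison is not possible; nevertheless this concatenation construction is the canonical route, and the technical points you flag (measurable dependence $a\mapsto\Pbf^1_a$, the strong Markov gluing of the two martingale pieces at $\tau^U$, and the case $X_{\tau^U}=\Delta$) are precisely the ones that require care and are supplied by the locally Feller framework of \cite{GH117}.
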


\begin{lemma}[cf. Lemma 4.17 in \cite{GH117}]\label{lmMPtoSMP}
Let $U$ be an open subset of $S$ and $L$ be a subset of $\rmC_0(S)\times\rmC(S)$ with $\rmD(L)$ is dense in $\rmC(S)$. Assume that the martingale local problem associated to $L$ is well-posed. Then there exists a function $h_0\in\rmC(S,\R_+)$ satisfying $\{h_0\not =0\}=U$, such that for all $h\in\rmC(S,\R_+)$ with  $\{h\neq 0\}=U$ and  $\sup_{a\in U} (h/h_0)(a)<\infty$, the martingale local problem associated to $hL$ is well-posed. 
\end{lemma}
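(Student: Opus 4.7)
The plan is a time-change correspondence between $\Mc(hL)$ and $\Mc(L)$. Heuristically, $hL$ generates the $L$-process sped up by the factor $h$; the function $h_0$ will be chosen so small near $\partial U$ that any solution of $\Mc(hL)$ starting in $U$ stays in $U$ for all time, making the time change a bijection onto $\Mc(L)$-paths. Existence in $\Mc(hL)$ is immediate from Theorem~\ref{thmExMP}, since $\rmD(hL)=\rmD(L)$ is dense in $\rmC_0(S)$ and the positive maximum principle passes from $L$ to $hL$ because $h\geq 0$; so only uniqueness needs work.

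Choose an exhaustion $K_n\Subset \mathrm{int}(K_{n+1})$ of $U$ by compact sets with $\bigcup_n K_n=U$. Apply Lemma~\ref{lmUQC} to $L$ with $\Kc=K_n$ and $\Uc$ a neighborhood of the diagonal designed so that $\tau(\tau_1)\leq \tau_2$ on $\{X_{\tau_1}\in K_n\}$ amounts to $X$ leaving $\mathrm{int}(K_{n+1})$ between $\tau_1$ and $\tau_2$; this yields $\delta_n>0$ such that for any $\Pbf\in \Mc(L)$ and $(\Fc_{t+})_t$-stopping times $\tau_1\leq\tau_2$ with $\Ebf[(\tau_2-\tau_1)\mathds{1}_{\{X_{\tau_1}\in K_n\}}]\leq\delta_n$, the process $X$ stays in $\mathrm{int}(K_{n+1})$ throughout $[\tau_1,\tau_2]$ on $\{X_{\tau_1}\in K_n\}$ with probability at least $1/2$. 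A continuous partition of unity subordinate to $(\mathrm{int}(K_{n+2})\setminus K_{n-1})_n$ then produces $h_0\in\rmC(S,\R_+)$ with $\{h_0\neq 0\}=U$ and $h_0\leq 2^{-n}\delta_n$ on $K_{n+1}\setminus\mathrm{int}(K_n)$.

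Now let $h$ satisfy $C:=\sup_U(h/h_0)<\infty$ and take $\Pbf\in\Mc(hL)$ with $\Pbf(X_0=a)=1$, $a\in U$. Set $\tau(t):=\int_0^t h(X_s)\,\d s$, $\rho:=\tau^{-1}$, $\widetilde X_s:=X_{\rho(s)}$. A standard time-change-of-martingale argument applied to the defining identities $f(X_{t\wedge\tau^V})-\int_0^{t\wedge\tau^V}hg(X_s)\,\d s$ shows $\law_\Pbf(\widetilde X)\in\Mc(L)$ with $\widetilde X_0=a$, whose law is thus uniquely determined by well-posedness of $L$. The core claim is $\tau^U=+\infty$ $\Pbf$-a.s., i.e.\ $X$ never exits $U$. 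Let $\sigma_n$ be the successive exit times of $\widetilde X$ from $K_n$; the strong Markov property at $\sigma_n$ combined with the previous step yields $\Pbf(\sigma_{n+1}-\sigma_n\geq\delta_n\mid\Fc_{\sigma_n})\geq 1/2$ on $\{\sigma_n<\widetilde\tau^U\}$. On such events $\widetilde X_u\in K_{n+1}\setminus\mathrm{int}(K_n)$ for $u\in[\sigma_n,\sigma_{n+1}]$, so $\int_{\sigma_n}^{\sigma_{n+1}}\d u/h_0(\widetilde X_u)\geq \delta_n/(2^{-n}\delta_n)=2^n$. Conditional Borel--Cantelli gives $\int_0^{\widetilde\tau^U}\d u/h_0(\widetilde X_u)=+\infty$ a.s., and the bound $h\leq Ch_0$ yields $\int_0^{\widetilde\tau^U}\d u/h(\widetilde X_u)=+\infty$, which is exactly $\tau^U=+\infty$ for $X$.

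Since $X$ remains in $U$, $\tau:[0,\infty)\to[0,\xi(\widetilde X))$ is a bijection and $X_t=\widetilde X_{\tau(t)}$, with $\tau$ recoverable from $\widetilde X$ as the inverse of $s\mapsto\int_0^s\d u/h(\widetilde X_u)$. Thus $\law_\Pbf(X)$ is determined by $\law_\Pbf(\widetilde X)$, giving uniqueness for initial conditions in $U$. For $a\notin U$: if $a\in\mathrm{int}(U^c)$, the martingale condition on any neighborhood $V\subset \mathrm{int}(U^c)$ reads $f(X_{t\wedge\tau^V})=f(a)$ in expectation, and density of $\rmD(L)$ in $\rmC_0(S)$ forces $X\equiv a$; the case $a\in\partial U$ reduces to the preceding ones by the strong Markov property at the first entry of $X$ into $U$. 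The main obstacle is the confinement step: arranging the sojourn-time lower bounds from Lemma~\ref{lmUQC} so that they cohere along the exhaustion under the strong Markov property, in order that the conditional Borel--Cantelli argument actually produces an almost-sure (and not merely in-expectation) divergence.
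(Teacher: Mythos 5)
The paper does not actually prove Lemma~\ref{lmMPtoSMP}; it is quoted from \cite{GH117}, so the comparison below is against the statement and the surrounding framework rather than against a written-out proof. Your existence step is fine and your overall strategy (time change, with $h_0$ chosen small near $\partial U$) is the natural one. However, the confinement step --- the ``core claim'' $\tau^U=+\infty$ $\Pbf$-a.s.\ --- is false in the generality the lemma is stated, and in particular for the L\'evy-type processes this paper is about. If $L$ has a jump component, a solution of $\Mc(hL)$ started in $U$ can, with positive probability, jump directly from a point of $U$ into $U^c$ in finite time, regardless of how small $h_0$ is near $\partial U$; nothing in your partition-of-unity construction rules that out. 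Once the process can leave $U$ in finite time the time-change stops being a bijection, so uniqueness for $\Mc(hL)$ does not reduce to uniqueness for $\Mc(L)$ as you claim, and one has to argue separately about what happens after $\tau^U$ (including the delicate subcase where the process lands exactly on $\partial U$, which your density argument for $a\in\mathrm{int}(U^c)$ does not cover and which you only gesture at via ``strong Markov at the first entry into $U$'').

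Even setting jumps aside, the Borel--Cantelli excursion argument has two internal errors. First, on $[\sigma_n,\sigma_{n+1}]$ the path is only guaranteed to be inside $K_{n+1}$; it can re-enter $\mathrm{int}(K_n)$, where your construction does not force $h_0\leq 2^{-n}\delta_n$, so the bound $\int_{\sigma_n}^{\sigma_{n+1}}\d u/h_0(\widetilde X_u)\geq 2^n$ does not follow from $\sigma_{n+1}-\sigma_n\geq\delta_n$. Second, $\sigma_n$ is the \emph{exit} time from $K_n$, so $\widetilde X_{\sigma_n}\notin\mathrm{int}(K_n)$; for a c\`adl\`ag process it need not lie in $K_n$ at all, so the event $\{X_{\tau_1}\in\Kc\}$ appearing in Lemma~\ref{lmUQC} (applied with $\Kc=K_n$) can be empty and the lemma yields no control over the post-$\sigma_n$ sojourn time. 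A correct argument along these lines would have to work with stopped processes on compacta $K\Subset U$ (where $h$ is bounded above and below and the time change is unproblematic), paste the resulting unique laws together using the strong Markov property and the stopped-process identification of Lemma~\ref{lmEqGentoEqPro}, and treat separately the behaviour on $U^c$; the global confinement statement you rely on is neither needed nor true.
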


\begin{lemma}[cf. Lemma A.2 in \cite{GH117}]\label{lemLocCont}
Let $(U_m)_{m\in\N}$ be an increasing sequence of open subsets such that $S=\bigcup_mU_m$. 
For $n,m\in\N\cup\{\infty\}$, let $\Pbf^{n,m}\in\Pcal(\Dloc(S))$ be such that 
\begin{enumerate}
\item[i)] for each $m\in\N$, 
$\Pbf^{n,m}\cv{n\to\infty}\Pbf^{\infty,m}$, weakly for the  local Skorokhod topology,
\item[ii)]  for each $m\in\N$ and $n\in\N\cup\{\infty\}$,~~
$\law_{\Pbf^{n,m}}\left(X^{\tau^{U_m}}\right) =\law_{\Pbf^{n,\infty}}\left(X^{\tau^{U_m}}\right)$.
\end{enumerate}
Then 
$\Pbf^{n,\infty}\cv{n\to\infty}\Pbf^{\infty,\infty}$, weakly for the  local Skorokhod topology.
\end{lemma}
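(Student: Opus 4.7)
The strategy is a classical tightness-and-identification argument, exploiting the fact that every path in $\Dloc(S)$ is determined by its stopped versions at the times $\tau^{U_m}$, since the open sets $(U_m)_m$ exhaust $S$. First I would establish that $(\Pbf^{n,\infty})_n$ is relatively compact in $\Pcal(\Dloc(S))$; then I would show that any weak subsequential limit coincides with $\Pbf^{\infty,\infty}$ by comparing the laws of the stopped processes at each level $m$ and passing to the limit $m\to\infty$.

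For tightness, hypothesis (i) implies that for each fixed $m$ the sequence $(\Pbf^{n,m})_n$ is convergent, hence tight. Compactness criteria for the local Skorokhod topology (such as those already used in \cite{GH117,GH017}) reduce tightness to controlling the laws of paths up to their first exit from compact subsets of $S$. Since any compact $K\subset S$ is contained in some $U_m$, such control depends only on $X^{\tau^{U_m}}$, whose law under $\Pbf^{n,\infty}$ coincides by (ii) with its law under $\Pbf^{n,m}$. The tightness of $(\Pbf^{n,m})_n$ therefore transfers to that of $(\Pbf^{n,\infty})_n$.

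To identify the limit, fix a subsequence with $\Pbf^{n_k,\infty}\to\Pbf^*$ weakly and fix $m\in\N$. Writing $s_m(x):=x^{\tau^{U_m}}$, hypothesis (ii) gives
\[\Pbf^{n_k,\infty}\circ s_m^{-1}=\Pbf^{n_k,m}\circ s_m^{-1}\quad\text{and}\quad\Pbf^{\infty,\infty}\circ s_m^{-1}=\Pbf^{\infty,m}\circ s_m^{-1}.\]
As $k\to\infty$, the left-hand side of the first identity converges weakly to $\Pbf^*\circ s_m^{-1}$, while, by (i) and the continuous mapping theorem applied to $s_m$, the right-hand side converges to $\Pbf^{\infty,m}\circ s_m^{-1}$. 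Combined with the second identity, this yields $\Pbf^*\circ s_m^{-1}=\Pbf^{\infty,\infty}\circ s_m^{-1}$ for every $m$. A monotone class argument then concludes $\Pbf^*=\Pbf^{\infty,\infty}$, using that $\bigvee_m\sigma(X^{\tau^{U_m}})$ equals the full Borel $\sigma$-algebra on $\Dloc(S)$: for any $x\in\Dloc(S)$ and $t<\xi(x)$ one has $\{x_s\}_{s\leq t}\Subset U_m$ for $m$ large, so $x$ is recovered from its stopped versions.

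The main obstacle is the continuity of $s_m$ in the local Skorokhod topology, since stopping at an exit time is notoriously discontinuous at paths grazing $\partial U_m$. The standard resolution is to require (or, without loss of generality, replace $(U_m)_m$ by) a cofinal sequence of open sets whose boundaries are negligible for $\Pbf^{\infty,m}$, so that $s_m$ is continuous $\Pbf^{\infty,m}$-almost surely; this suffices for the continuous mapping step and does not affect the conclusion, since only $\bigcup_m U_m = S$ is used.
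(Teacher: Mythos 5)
Your tightness-plus-identification strategy is a natural route, and you correctly flag the central difficulty: the stopping map $s_m\colon x\mapsto x^{\tau^{U_m}}$ is not continuous on $\Dloc(S)$. The genuine gap is in the resolution. Your WLOG on the sequence $(U_m)$ makes $s_m$ continuous $\Pbf^{\infty,m}$-almost surely, which justifies $\Pbf^{n_k,m}\circ s_m^{-1}\to\Pbf^{\infty,m}\circ s_m^{-1}$. But you also assert $\Pbf^{n_k,\infty}\circ s_m^{-1}\to\Pbf^*\circ s_m^{-1}$, which by the continuous mapping theorem requires $s_m$ to be continuous $\Pbf^*$-almost surely --- and $\Pbf^*$ is a subsequential limit not known in advance, so the WLOG as stated does not cover it. This can be patched (fix one convergent subsequence at a time and pick a cofinal $(U'_m)$ whose grazing events are null under both $\Pbf^{\infty,m}$ and that particular $\Pbf^*$, noting that (ii) persists under shrinking $U_m$ since $X^{\tau^{U'_m}}$ is a measurable function of $X^{\tau^{U_m}}$ when $U'_m\subset U_m$), but this must be made explicit. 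Relatedly, ``boundaries negligible for $\Pbf^{\infty,m}$'' is itself imprecise: $\Pbf^{\infty,m}$ lives on path space, and the discontinuity set of $s_m$ is the set of \emph{paths grazing} $\partial U_m$, not a subset of $S$; what you need is negligibility of that path-space event, a stronger and more delicate statement.

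Two secondary remarks. The tightness step rests on an unstated compactness criterion for $\Dloc(S)$, so the transfer from $(\Pbf^{n,m})_n$ to $(\Pbf^{n,\infty})_n$ is only sketched. And for the closing step, rather than arguing that $\bigvee_m\sigma(X^{\tau^{U_m}})$ is the Borel $\sigma$-algebra, it is cleaner to use that $s_m(x)\to x$ in $\Dloc(S)$ for every $x$ (since any $\{x_s\}_{s<t}\Subset S$ is eventually contained in $U_m$): from $\Pbf^*\circ s_m^{-1}=\Pbf^{\infty,\infty}\circ s_m^{-1}$ for all $m$ one gets, for every bounded continuous $f$, that $\int f\circ s_m\,\d\Pbf^*=\int f\circ s_m\,\d\Pbf^{\infty,\infty}$, and dominated convergence yields $\int f\,\d\Pbf^*=\int f\,\d\Pbf^{\infty,\infty}$.
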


\bibliographystyle{alpha}
\bibliography{Loc_Fel.bib}

\end{document}